\documentclass[11pt, reqno]{amsart}%
\usepackage{amssymb}
\usepackage{amsmath,esint}
\usepackage[shortlabels]{enumitem}
\setlist[enumerate]{leftmargin=*} 
\setlist[itemize]{leftmargin=*, align=parleft,left=0pt..1em}% to remove the indentation on the items
\usepackage{amsthm, dsfont}
\usepackage{amscd,mdwlist}
\usepackage{color}
\usepackage{cite}
\usepackage{bbm}
\usepackage{tcolorbox}
\usepackage{thmtools}
\usepackage[backgroundcolor=white, bordercolor=blue,
linecolor=blue]{todonotes}
\usepackage{relsize}
\usepackage[pagebackref=false, pdfpagelabels, plainpages=false]{hyperref}

%\usepackage{lineno}
%\linenumbers
% \usepackage[pagewise]{lineno}
%\linenumbers

\setlist[description]{leftmargin=\parindent,labelindent=\parindent}
\setlist[enumerate]{leftmargin=8mm ,labelindent=0cm}

\hypersetup{
colorlinks=false,
linkcolor=red,%---------------------%couleur des liens dans le document (references
filecolor=magenta,%-----------------%couleur des files à ouvrir
urlcolor=cyan,%---------------------%couleur des liens internets à ouvrir
citecolor=blue,%--------------------%couleur des liens pour les bibliographies
}%-------------------------------------%pour les courleur des liens ; ne jamias oublier de mettre le package hyperref avant

\usepackage[
paper=a4paper,
left=2cm,
right=25mm,
top=2.5cm,
bottom=2cm,
includefoot,
foot=8mm,
bindingoffset=0mm]{geometry}

\pagestyle{plain} 

\theoremstyle{plain}
\declaretheorem[title=Theorem, parent=section]{theorem}
\declaretheorem[title=Lemma,sibling=theorem]{lemma}
\declaretheorem[title=Proposition,sibling=theorem]{proposition}
\declaretheorem[title=Corollary,sibling=theorem]{corollary}

\theoremstyle{definition}
\declaretheorem[title=Definition,sibling=theorem]{definition}
\declaretheorem[title=Remark,sibling=theorem]{remark}
\declaretheorem[title=Remark, numbered=no]{remark*}
\declaretheorem[title=Example, sibling=theorem]{example}

\numberwithin{equation}{section}

\DeclareMathOperator*{\esssup}{ess\,sup}
\DeclareMathOperator*{\essinf}{ess\,inf}

\newcommand{\il}{\int\limits}
\newcommand{\iil}{\iint\limits}
\DeclareMathOperator{\R}{\mathbb{R}}
\renewcommand{\d}{\,\mathrm{d}}

%%%%%%

\newcommand{\WnuOm}{W_{\nu}^{p}(\Omega)}

\newcommand{\eps}{\varepsilon}
\newcommand{\uu}{\mathring{\text{u}}}
% Betrag

% Skalarprodukt

\newcommand{\vertiii}[1]{{\left\vert\kern-0.25ex\left\vert\kern-0.25ex\left\vert #1 \right\vert\kern-0.25ex\right\vert\kern-0.25ex\right\vert}}

\begin{document}
\title{Nonlocal Gagliardo-Nirenberg-Sobolev type inequality \\
\url{https://dx.doi.org/10.4310/CMS.241217014531}
}

\author{Guy Foghem}
\address{{\tiny Technische Universt\"{a}t Dresden, Fakult\"{a}t f\"{u}r Mathematik, Institut f\"{u}r Wissenschaftliches Rechnen, Willers-Bau, B 220, Zellescher Weg 12-14 01069, Dresden, Germany. Email: guy.foghem@tu-dresden.de}}

\thanks{Financial support by the DFG via the Research Group 3013: ``Vector-and Tensor-Valued Surface PDEs'' is gratefully acknowledged.}

\begin{abstract}
We establish Gagliardo-Nirenberg-Sobolev type inequalities on nonlocal Sobolev spaces driven by $p$-L\'{e}vy  integrable kernels, by imposing some appropriate growth conditions on the associated critical function.        
This naturally allows to devise Sobolev embeddings, as well as, compact embeddings of nonlocal Sobolev spaces into Orlicz type spaces. 
The Gagliardo-Nirenberg-Sobolev type inequalities, as in the classical context, turn out to have some reciprocity with Poincar\'{e} and Poincar\'{e}-Sobolev type inequalities. 
The classical fractional Sobolev inequality is also derived as a direct consequence. 
\end{abstract}

\keywords{$p$-L\'{e}vy kernel, Nonlocal Sobolev inequality,  Sobolev (compact) embeddings, Orlicz spaces}

\subjclass[2010]{
39B62, %Functional inequalities, including subadditivity, convexity, etc
46E30, %Spaces of measurable functions ($L^p$-spaces, Orlicz spaces, Köthe function spaces, Lorentz spaces, rearrangement invariant spaces, ideal spaces, etc.)
46E35, %Sobolev spaces and other spaces of “smooth” functions, embedding theorems, trace theorems
46E39, %Sobolev (and similar kinds of) spaces of functions of discrete variables
47G20 %Integro-differential operators
}

\maketitle

\section{Introduction}
\subsection{Motivation} Classical Sobolev inequalities are ubiquitous within the area of partial differential equations and calculus of variations, and have been investigated by a numerous number of mathematicians. They play crucial roles in existence theory and regularity theory. The Gagliardo-Nirenberg-Sobolev inequality, amongst many others, is certainly the most significant and influential Sobolev inequality. The aim of this work to establish analog of such an inequality on nonlocal Sobolev spaces generated by $p$-L\'{e}vy  integrable kernels, that can be seen as a genuine generalization of fractional Sobolev-Slobodeckij spaces. 
Our exposition aims to be as self-contained as possible. 
To begin with, let us introduce nonlocal Sobolev space with respect to a $p$-L\'{e}vy integrable kernel $\nu$. Throughout this work the kernel $\nu: \R^d\setminus\{0\}\to [0,\infty)$, $d\geq1$, is assumed to be the density of a symmetric $p$-L\'{e}vy  measure with $1\leq p<\infty$ that is $\nu$ is symmetric, i.e., $\nu(h) = \nu(-h)$ for $h\in \R^d\setminus\{0\}$ and $\nu$ is $p$-L\'{e}vy  integrable, i.e., 
\begin{align}\label{eq:plevy-integrable}
\int_{\R^d} (1\land|h|^p)\nu(h)\d h<\infty.
\end{align}

%(h_1,h_2,\cdots, h_d)
\noindent Hereafter, we write $|h|= (h_1^2+ h_2^2+\cdots+ h_d^2)^{1/2}$ and $a\land b= \min(a,b)$ for $a,b\in \R$. The associated nonlocal Sobolev space  is $W_\nu^p(\R^d)= \big\{u\in L^p(\R^d):~ |u|_{W_\nu^p(\R^d)}<\infty \big\}$ where 
\begin{align}\label{eq:nonsemi-norm}
&|u|_{W_\nu^p(\R^d)}= \Big(\iil_{\R^d\R^d} |u(x)-u(y)|^p\nu(x-y)\d y\d x\Big)^{1/p}. 
\end{align}
The space $W_\nu^p(\R^d)$ amounts to a Banach space under the norm
\begin{align*}
\|u\|_{W_\nu^p(\R^d)}= \big(\|u\|^p_{L^p(\R^d)}+ |u|^p_{W_\nu^p(\R^d)}\big)^{1/p}. 
\end{align*}

\noindent The terminology \emph{nonlocal Sobolev space} to designate the space $W_\nu^p(\R^d)$ is justified since the latter naturally arises as the energy space associated with a nonlocal operator, which is a (non)linear $p$-L\'{e}vy  integro-differential operator generated by $\nu$, of the form 
\begin{align*}
Lu (x):= 2\operatorname{p.v.}\int_{\R^d} |u(x)-u(y)|^{p-2}(u(x)-u(y))\nu(x-y)\d y,\qquad\qquad (x\in \R^d).
\end{align*} 
Indeed the $p$-L\'{e}vy operator $Lu$  appears as the subdifferential of  $|u|^p_{W^p_\nu(\R^d)}$ and, moreover, it is readily seen that $\langle Lu,u\rangle=|u|^p_{W^p_\nu(\R^d)}$ for a sufficiently smooth function $u$. We refer interested reader to \cite{FK22,FPS23,Fog23s,DFK22} recent studies on Integro-Differential Equations (IDEs) associated with the $p$-L\'{e}vy  operator $L$.  
We emphasize that  for $p=2$ the condition \eqref{eq:plevy-integrable} is known in the theory of probability analysis as the L\'{e}vy condition  and in this case the operator $ L$ naturally arises as  the generator of a L\'{e}vy stochastic processes with pure jumps; see \cite{Sat13} for more.  Therefore terming the condition \eqref{eq:plevy-integrable} naturally as the $p$-L\'{e}vy  integrability condition is arguably much more congruent.  It is worth noting  that the $p$-L\'{e}vy  integrability condition \eqref{eq:plevy-integrable} is not fortuitous and is intrinsically related to the space $W^p_\nu(\R^d)$. In fact, the $p$-L\'{e}vy  integrability condition \eqref{eq:plevy-integrable} can be self-generated from the seminorm $|\cdot|_{W^p_\nu(\R^d)}$ in the sense that, $\nu$ is  $p$-L\'{e}vy  integrable if and only if $|u|_{W^p_\nu(\R^d)}<\infty$ for all $u\in C_c^\infty(\R^d)$.  Further,  $p$-L\'{e}vy  integrable if and only if $W^p_\nu(\R^d)\neq \{0\}$ if and only if the inclusion $W^{1,p}(\R^d)\subset  W_\nu^p(\R^d)$ holds and is continuous; see Theorem  \ref{thm:charac-plevy-radial-bis}. Here $W^{1,p}(\R^d)$ is the classical Sobolev space, i.e.,  the space of functions in $ L^p(\R^d)$ whose first order distributional derivatives also lie in $ L^p(\R^d)$. 
Recent studies regarding the function space $W_\nu^p(\R^d)$ and the analysis of related integro-differential equations  can be found in \cite{Fog23s,FK22, Fog20}.
\noindent The class of $p$-L\'{e}vy  integrable kernels includes not only integrable functions, but also kernels with heavy singularities at the origin. A prototypical example is given by  $\nu(h) =s(1-s) |h|^{-d-sp}$,  whereby it is readily seen that $\nu\in L^1(\R^d,1\land|h|^p)$ if and only if $s\in (0,1)$. The resulting space is thus the well-known fractional Sobolev-Slobodeckij space $W^{s,p}(\R^d)$ of order $s$. Just like the latter, the nonlocal Sobolev space $W_\nu^p(\R^d)$ also appears as a refinement space between $L^p(\R^d)$ and the classical Sobolev space $W^{1,p}(\R^d)$.   Furthermore, as $s\to1^-$, the space $W^{s,p}(\R^d)$ reduces to the classical Sobolev space $W^{1,p}(\R^d)$. Rigorously speaking, if $|u|_{W^{s,p}(\R^d)}$ is given by \eqref{eq:nonsemi-norm} for $\nu(h)= s(1-s)|h|^{-d-sp}$, $\nabla u$ denotes the distributional gradient of $u$ and  
$|u|_{W^{1,p}(\R^d)}=\|\nabla u\|_{L^{p}(\R^d)}$ is the $L^p$-norm of $|\nabla u|$
then asymptotically we have 
\begin{align}\label{eq:frac-asypp-BBM}
\lim_{s\to 1^-}|u|^p_{W^{s,p}(\R^d)}= \frac{|\mathbb{S}^{d-1}|}{p} K_{d,p}|u|^p_{W^{1,p}(\R^d)}. 
\end{align}
Here $K_{d,p}$ is a universal constant, \cite{Fog23}, given for any unit vector $e\in \mathbb{S}^{d-1}$ by
\begin{align*}
K_{d,p}= \fint_{\mathbb{S}^{d-1}} |w\cdot e|^p\sigma_{d-1}(w)=\frac{\Gamma\big(\frac{d}{2}\big)\Gamma\big(\frac{p+1}{2}\big)}{\Gamma\big(\frac{d+p}{2}\big) \Gamma\big(\frac{1}{2}\big)}. 
\end{align*}
\noindent Another side motivation to studying the class of nonlocal Sobolev spaces under consideration is that, the asymptotic convergence in \eqref{eq:frac-asypp-BBM} remains true for a general family $(\nu_\eps)_{\eps>0}$ of radial $p$-L\'{e}vy  integrable kernels $\nu_\eps : \R^d\setminus\{0\}\to [0,\infty)$ satisfying 
\begin{align}\label{eq:approx-dirac-levy}
\int_{\R^d}(1\land|h|^p) \nu_\eps (h)\d h=1\quad\text{and for all $\delta>0,$}\quad \lim_{\eps\to 0^+} \int_{|h|>\delta}(1\land|h|^p) \nu_\eps (h)\d h=0. 
\end{align}

\noindent As a result, see \cite{Fog23s}, the conditions in \eqref{eq:approx-dirac-levy} hold if and only if  for all $u\in W^{1,p}(\R^d)$, 
\begin{align}\label{eq:gene-asypp-BBM}
\lim_{\eps\to 0^+}\iil_{\R^d\R^d} |u(x)-u(y)|^p\nu_\eps(x-y) \d y\d x= K_{d,p}|u|^p_{W^{1,p}(\R^d)}.
\end{align}
We also refer to \cite{Fog23,Fog20,BBM01, FGKV20} where the asymptotic formula \eqref{eq:gene-asypp-BBM} is established. A remarkable example of sequence $(\nu_\varepsilon)_\varepsilon$ generated after rescaling of a radial $p$-L\'{e}vy  integrable function $\nu: \R^d\setminus\{0\}\to [0, \infty)$ and satisfying the properties in \eqref{eq:approx-dirac-levy} is defined as follows 
\begin{align*}
\begin{split}
\nu_\varepsilon(h) = 
\begin{cases}
\varepsilon^{-d-p}\nu\big(h/\varepsilon\big)& \text{if }~~|h|\leq \varepsilon\\
\varepsilon^{-d}|h|^{-p}\nu\big(h/\varepsilon\big)& \text{if }~~\varepsilon<|h|\leq 1\\
\varepsilon^{-d}\nu\big(h/\varepsilon\big)& \text{if }~~|h|>1,
\end{cases}\,\,\,\text{provided}\quad \int_{\R^d}(1\land|h|^p)\,\nu(h)\d h=1.
\end{split}
\end{align*}
\vspace{-1mm}
The simplest version of Gagliardo-Nirenberg-Sobolev inequality, reads as follows; for $0\leq s\leq 1$ and $1\leq p<\infty$, if the critical Sobolev exponent $p^*_s$ also called the Sobolev conjugate of $p$ satisfies 
$$\frac{1}{p^*_s}:= \frac{1}{p}- \frac{s}{d}>0,$$
then there exists a constant $C_s= C(d,s,p)>0$ such that 
\begin{align}\label{eq:frac-sobolev-inequality}
\Big(\int_{\R^d} |u(x)|^{p^*_s}\d x\Big)^{1/p_s^*} \leq C_s|u|_{W^{s,p}(\R^d)}\qquad\text{for all $u\in L^{p^*_s}(\R^d)$}. 
\end{align}
\noindent The inequality \eqref{eq:frac-sobolev-inequality} is tautological for $s=0$, with the convention $W^{0,p}(\R^d)= L^p(\R^d)$. The proof for $s=1$ can be found in classical books, e.g., \cite{Adams,Zi12, Sal02}.
Historically the inequality \eqref{eq:frac-sobolev-inequality} is named as the Gagliardo-Nirenberg-Sobolev inequality because the case $s=1$ and $1<p<d$ is due to Sobolev \cite{Sob38} whose original proof relies upon the boundedness of the Riesz potential from $L^p(\R^d)$ to $L^{p^*_1}(\R^d)$ whereas, Gagliardo \cite{Gag59} and Nirenberg \cite{Nir59} independently provided the complete picture of inequality \eqref{eq:frac-sobolev-inequality} when $s=1$ including the case $p=1$ much later in a more general context of the well-known \textit{Gagliardo-Nirenberg interpolation inequality}; see \cite{FFMR21} for recent progress on this topic. For $ s\in (0,1)$,  the direct proof that we present in Theorem \ref{thm:gagliardo-sobolev}, for the convenience of the reader, is apparently due to Haim Brezis and can be found in \cite[Proposition 15.5]{Ponce16elliptic}. It is important to highlight that earlier proofs of the inequality \eqref{eq:frac-sobolev-inequality} exist in the literature as well. For instance, a proof using basic analysis tools is well incorporated in \cite[Section 6]{Hitchhiker} which originally springs from \cite{SV11}. See also \cite{BBM02,MS02} where the fractional inequality is established with a robust constant, i.e., with a constant $C_s$ that stays asymptotically equivalent to $(1-s)^{1/p}$ as $s\to 1^-$. The best constant of the inequality \eqref{eq:frac-sobolev-inequality}, when $s=1$, is exhibited in the case $p>1$ in \cite{Tal76}  and in the case $p=1$ in \cite[Section 6]{FF60}; see also \cite[Theorem 2.7.4]{Zi12} or \cite{Sal02} for more detailed proofs. The best constant of the fractional Gagliardo-Nirenberg-Sobolev inequality  in the special case $p=2$ and $ s\in (0,1)$ is established in \cite{CT04}. Related Sobolev type inequalities with best constants such as log-Sobolev inequality, Hardy-Sobolev inequality or Gagliardo-Nirenberg inequality are referenced in \cite{FS08,HT22}. 
\subsection{Main goal}
In view of the aforementioned classical (fractional) Gagliardo-Nirenberg-Sobolev inequality \eqref{eq:frac-sobolev-inequality}, it is reasonable to seek for an analogous inequality for the nonlocal Sobolev space $W_\nu^p(\R^d)$. Accordingly, we need to enforce adequate assumptions regarding the $p$-L\'{e}vy  kernel $\nu$. 
First and foremost, for $r>0$, consider the \textit{rearrangement radius function} $\eta$ defined by 
\begin{align*}
\text{$\eta(r) = (\frac{r}{c_d})^{1/d}$ \,\, with \,\, $c_d=|B(0,1)|$}.
\end{align*} 
Indeed, for every measurable set $E\subset \R^d$,  there holds that  $|E|=|B(0,\eta(|E|))|$.  Next we need  introduce the potential $w:[0,\infty)\to [0,\infty)$ defined by 
\begin{align}\label{eq:defw1}
w(r) = \Big(|B(0,\eta(r))| \int_{B^c(0, \eta(r))} \nu(h) \d h \Big)^{1/p} 
\quad\text{equally}\quad 
\frac{w^p(r)}{r} = \int_{B^c(0, \eta(r))} \nu(h) \d h.
\end{align}
The potential $w$, which is essential in our analysis, avoids the eventual singularity of  $\nu$ at the origin. 
For the sake of the reader's convenience, we momentarily consider the following standing assumptions on $\nu$, that we improve later on. In what follows, the notation $v^{-1}$ stands for the reciprocal inverse of a bijective function $v$ and should not be confused with the fractional inverse $1/v$. In addition if $\nu$ is radial, we merely identify $\nu(h)=\nu(|h|)$, $h\in \R^d$. 

\begin{enumerate}[\textbf{Assumption} A, wide, labelwidth=!, labelindent=0pt]
\item \hspace{-1.5ex} \textbf{:} \label{item:assump-A} %\textbf{Assumption A:} 
The function $\nu$ satisfies the $p$-L\'{e}vy  integrability condition \eqref{eq:plevy-integrable}, is radial and is almost decreasing, i.e., there is $0<\kappa\leq1$ such that 
\begin{align}\label{eq:almost-decreas}\tag{A}
\kappa \nu(|x|) \leq \nu(|y|) \quad\text{for all $|x|\geq |y|$}. 
\end{align}
\item \hspace{-1.5ex} \textbf{:} \label{item:assump-B} 
%\textbf{Assumption B:} 
The mapping $t\mapsto 1/w(1/t)$ is invertible from $[0,\infty)$ to $[0,\infty)$, whose inverse $\phi$ is a Young function (see below for more details) and will be called \emph{the critical Young function associated with $\nu$}. To be more precise, $\phi$ is defined by 
\begin{align}\label{eq:invert-cond}\tag{B}
\phi(t)= \Big( \frac{1}{w(1/t)}\Big)^{-1} \quad \text{equivalently}\quad w(t)= \frac{1}{\phi^{-1}(1/t)} . 
\end{align}
\vspace{1mm}	
\item \hspace{-1.5ex} \textbf{:} \label{item:assump-C} 
%\textbf{Assumption C:} 
The function $\phi_p: [0,\infty) \to [0,\infty)$ with $\phi_p(t)= \phi(t^{1/p})$ is convex, hence a Young function, and satisfies the growth condition: there is $\theta >0$, such that
\begin{align}\label{eq:growth-condition}\tag{C}
\phi_p\big(\theta^p \frac{s}{t}\big)	\leq \frac{\phi_p(s)}{\phi_p(t)} \qquad\text{for all $s\leq t$}.
\end{align}
Clearly, this is equivalent to saying that for all $s\leq t$
\begin{align*}
\phi\big(\theta \frac{s}{t}\big)	\leq \frac{\phi(s)}{\phi(t)} \qquad\text{or equally} \qquad \theta \leq \phi^{-1}\big(\frac{s}{t}\big)\frac{\phi^{-1}(t)}{\phi^{-1}(s)}
\end{align*}
Putting $s= \frac{1}{\phi^{-1}(t')}$ and $t= \frac{1}{\phi^{-1}(s')}$ yields 
\begin{align}\label{eq:growth-condition-bis}\tag{C'}
\phi^{-1}\big(\frac{s'}{t'}\big)	\geq \theta\frac{\phi^{-1}(s') }{\phi^{-1}(t')} \qquad\text{for all $s'\leq t'$}.
\end{align}
\end{enumerate}
\noindent It is worth highlighting that $\phi$ only depends on $\nu, p$ and $d$ but, to alleviate the notations, we keep this implicit. 
%\vspace{1mm}
Let us now provide basic notions on Young functions and associated Orlicz spaces. 
A thorough and extensive study of Orlicz spaces are carried out in the seminal textbooks \cite{RR91,RR02}. See also the traditional references \cite{Adams,HH19,KR61,KJF77} and the monographs\cite{RGMP16,DHHR11}, where the latter offers a treatise on generalized Orlicz spaces, also known as Musielak-Orlicz spaces, including Lebesgue and Sobolev spaces with variable exponents. Recall that a function $\phi:[0,\infty]\to[0,\infty]$ is convex if, $\phi(s+\tau (t-s)) \leq \phi(s) +\tau(\phi(t) -\phi(s))$ for all $s,t\geq 0$ and $\tau\in [0, 1]$.
\vspace{1mm}
\noindent 	\textbf{Young function:} A convex function $\phi: [0,\infty] \to [0,\infty]$ such that $\phi(0)= 0$ is termed a Young function. 
\noindent Consequently as a Young function $\phi$ is nondecreasing, the mapping $t\mapsto\frac{\phi(t)}{t}$ is nondecreasing on $(0,\infty]$, and, either $\phi\equiv 0$ (i.e. $\phi$ is identically zero) or $\phi(\infty)= \infty$. Moreover, it is well known that $\phi$ is continuous on its effective domain, i.e., on the set of elements in $t\in [0,\infty)$ where $\phi(t)<\infty$. A more advanced calculus, e.g., Jensen's theorem \cite[Theorem 1.3.1]{RR91}, yields the existence of a non-decreasing and right continuous function $b: [0, \infty)\to [0, \infty) $ called the density of $\phi$, such that $\phi(t)= \int_0^t b(s)\d s. $
\noindent This implies that $\phi$ has left and right derivatives that coincide except possibly on a countable set. To avoid unnecessary pathologies, it is customary to also to assume that $\phi$ is neither identically zero nor identically infinite on $(0, \infty)$. 

\vspace{1mm}
\noindent \textbf{Convex conjugate:} To a Young function $\phi$ one associates the convex complementary, also called the convex conjugate, $\widetilde{\phi} :[0,\infty]\to [0,\infty]$, which is simultaneously defined as follows
\begin{align*}
\widetilde{\phi}(t)= \sup\big\{ts-\phi(s):~s>0\big\}=\int_0^t\widetilde{b}(s)\d s.
\end{align*}
Here $\widetilde{b}(t)= \sup\{s>0:~ b(s)<t\}$ is the right inverse of $b$. Clearly, $\widetilde{\phi}$ is also a Young function, i.e., convex and $\widetilde{\phi}(0)=0$. Note that, by virtue of the Fenchel-Moreau theorem, the couple $(\phi, \widetilde{\phi})$ is uniquely defined 
provided that $\phi$ is lower semi-continuous and additionally  we have 
\begin{align*}
\phi(t)= \sup \big\{ts-\widetilde{\phi}(s):~s>0\big\}= \int_0^tb(s)\d s.
\end{align*}
Analogously, the couple $(b, \widetilde{b})$ is uniquely determined and $b$ is also the right inverse of $\widetilde{b}$, i.e., $b(t)= \sup\{s>0:~ \widetilde{b}(s)<t\}.$ Furthermore, if $b$ is strictly increasing then $\widetilde{b}= b^{-1}$, the inverse of $b$. 

%\vspace{2mm}
\noindent \textbf{$N$-function:} A Young function $\phi: [0,\infty]\to [0,\infty]$ is called a $N-$function (Nice Young function) if its density $b: [0, \infty)\to [0, \infty) $ is nondecreasing, right continuous and satisfies $0<b(t)<\infty $ for $t>0$, $\lim\limits_{t\to0^+ } b(t)=0$ and $\lim\limits_{t\to \infty }b(t)=\infty$. This is equivalent to saying that $\phi$ is continuous, increasing, convex and in addition the mapping $t\mapsto \frac{\phi(t)}{t}$, $t>0$ is increasing and satisfies 
\begin{align}\label{eq:young-asymp}
&\lim_{t\to 0^+}\frac{\phi(t)}{t}= \lim_{t\to \infty}\frac{t}{\phi(t)}= 0.
\end{align} 

\noindent \textbf{Orlicz space:} Next, we write $K^\phi(\R^d)$ and $L^\phi(\R^d)$ respectively to denote the Orlicz class and the Orlicz space with respect to the Young function $\phi$ defined by
\begin{align*}
K^\phi(\R^d)&= \Big\{u: \R^d\to \R\text{ meas.}: ~\int_{\R^d} \phi\big(|u(x)|\big)\d x<\infty \Big\},
\\
L^\phi(\R^d)&= \Big\{u: \R^d\to \R\text{ meas.}:~ \int_{\R^d} \phi\Big(\frac{|u(x)|}{\lambda}\Big)\d x<\infty ~~\text{for some $\lambda>0$}\Big\}.
\end{align*}

\noindent It is worthwhile noting that the Orlicz class $K^\phi(\R^d)$ is a convex set of functions and that $L^\phi(\R^d)$ is the linear hull of $K^\phi(\R^d)$. In addition, $ u\in L^\phi(\R^d) $ if and only if $u\in \lambda K^\phi(\R^d)$ for some $\lambda>0$. The space $L^\phi(\R^d)$ is a Banach space furnished with the Luxemburg norm $\|\cdot\|_{L^\phi(\R^d)}$ defined as the Minkowski functional or gauge of $K^\phi(\R^d)$ by 
%ù
\begin{align}\label{eq:luxemburg-norm}
\|u\|_{L^\phi(\R^d)}=\inf \Big\{ \lambda>0~: \int_{\R^d} \phi\Big(\frac{|u(x)|}{\lambda}\Big)\d x\leq 1\Big\}.
\end{align}
Obviously, by Fatou's lemma we have 
\begin{align}\label{eq:modular-lux}
\int_{\R^d} \phi\Big(\frac{|u(x)|}{~~~\|u\|_{L^\phi(\R^d)}}\Big)\d x\leq 1. 
\end{align}

\noindent Beside the Luxemburg norm $\|\cdot\|_{L^\phi(\R^d)}$, we have  the Orlicz norm $|\cdot|_{L^\phi(\R^d)}$, with 
\begin{align*}
\quad |u|_{L^\phi(\R^d)}= \sup\Big\{\int_{\R^d} u(x)v(x)\d x~: \int_{\R^d} \widetilde{\phi} (|v(x)|)\d x\leq 1\Big\}. 
\end{align*}
Moreover, the following comparison holds for all $u\in L^\phi(\R^d)$, 
\begin{align*}
\|u\|_{L^\phi(\R^d)} \leq |u|_{L^\phi(\R^d)} \leq 2\|u\|_{L^\phi(\R^d)}. 
\end{align*} 
\noindent For the Young function $\phi_L(t)= t^p/p$ with 
$1\leq  p<\infty$, the Orlicz space $L^{\phi_L}(\R^d)$ coincides with the well-known Lebesgue space $L^p(\R^d)$. In addition for $p\neq 1,$ we have $\widetilde{\phi}_L(t) = t^{p'}/p'$ with $p'$ satisfying the relation $ pp'= p+p'$. 
\noindent The computation of the Luxemburg norm is not often straightforward. To illustrate this, let $E\subset \R^d$ be a measurable set with finite Lebesgue measure, i.e., $|E|<\infty$ and consider $\mathds{1}_E $ to be its indicator function, i.e., $\mathds{1}_E(x)=1$ if $x\in E$ and $\mathds{1}_E(x)=0$ elsewhere, then 
\begin{align}\label{eq:luxem-indicator}
\|\mathds{1}_E\|_{L^\phi(\R^d)}= \frac{1}{ \phi^{-1}( 1/|E|) }.
\end{align}
Indeed, for $\lambda>0$ we have $\int_{\R^d} \phi(\frac{\mathds{1}_E}{\lambda}) (x) \d x= |E|\phi(\frac1{\lambda})$ which is less than 1 if and only if 
\begin{align*}
\frac{1}{  \phi^{-1}( 1/|E|)}\leq \lambda\quad \text{and hence}\quad \frac{1}{  \phi^{-1}( 1/|E|)}\leq \|\mathds{1}_E\|_{L^\phi(\R^d)}. 
\end{align*}
Therefore, the formula \eqref{eq:luxem-indicator} holds as it suffices to choose $\lambda= \frac{1}{  \phi^{-1}( 1/|E|)}$ to have the reverse inequality. In the same spirit, using Jensen's inequality one establishes that 
\begin{align*}%\label{eq:orlicz-indicator}
|\mathds{1}_E|_{L^\phi(\R^d)}=|E| \widetilde{\phi}^{-1} ( 1/|E| ).
\end{align*}
Throughout this note, we only use the Luxemburg norm $\|\cdot \|_{L^\phi(\R^d)}$ defined as in \eqref{eq:luxemburg-norm}. From now, we  assume the Orlicz space $L^\phi(\R^d)$ associated with the critical function $\phi$ is equipped with the norm $\|\cdot\|_{L^\phi(\R^d)}$.  Our main result (see Theorem \ref{thm:main-result2} for a version involving non-radial kernel) reads as follows. 

\begin{theorem}\label{thm:main-result}
Let \ref{item:assump-A}, \ref{item:assump-B} and \ref{item:assump-C} be in force. Define $\Theta_t=  t[2\kappa^2C_p(t)\phi(\frac{\theta}{t})]^{-1/p}$ with $C_p(t)=\frac{t^p-2}{t^p-1}$, $t\geq2$. The following inequality holds
\begin{align}\label{eq:main-equality}
\|u\|_{L^\phi(\R^d)} \leq \Theta_t\Big(\iil_{\R^d\R^d} |u(x)-u(y)|^p\nu(x-y)\d y\d x\Big)^{1/p}\quad\text{for all}\quad u \in L^\phi(\R^d).
\end{align}
Accordingly, the embeddings $W_\nu^p(\R^d) \hookrightarrow L^\phi(\R^d)$
and $W^{1,p}(\R^d) \hookrightarrow L^\phi(\R^d)$ are continuous.
\end{theorem}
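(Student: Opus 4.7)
First I would reduce to $u\geq 0$, since $\bigl||u(x)|-|u(y)|\bigr|\leq|u(x)-u(y)|$ makes the right-hand side of \eqref{eq:main-equality} only decrease when passing from $u$ to $|u|$. Next, fixing $t\geq 2$, I would introduce the dyadic super-level sets $A_k=\{u>t^k\}$ with $a_k:=|A_k|$ for $k\in\mathbb{Z}$, and denote $J:=\iint_{\R^d\times\R^d}|u(x)-u(y)|^p\nu(x-y)\d y\d x$. The argument then splits naturally into three steps.

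\emph{Step 1: a nonlocal isoperimetric lemma.} I would first establish that for every $x\in\R^d$ and every measurable $A\subset\R^d$ with $|A|<\infty$,
\begin{equation*}
\int_{A^c}\nu(x-y)\d y\;\geq\;\kappa^2\int_{B(x,\eta(|A|))^c}\nu(|x-y|)\d y\;=\;\kappa^2\,\frac{w(|A|)^p}{|A|},
\end{equation*}
the last equality being the very definition \eqref{eq:defw1} of $w$. This is a rearrangement estimate: by the layer-cake formula together with the almost-decreasing property \eqref{eq:almost-decreas} applied twice, one compares the left-hand side with the extremal configuration in which $A$ is replaced by the ball $B(x,\eta(|A|))$ of the same volume.

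\emph{Step 2: a weak-type estimate for level sets.} Restricting $J$ to the pairs $(x,y)\in A_{k+1}\times A_k^c$, on which $|u(x)-u(y)|\geq t^k(t-1)$, and exploiting the symmetry of $\nu$ together with Step 1, I would deduce
\begin{equation*}
2\,\kappa^2\,(t-1)^p\,t^{kp}\,a_{k+1}\,\frac{w(a_k)^p}{a_k}\;\leq\;J \qquad\text{for every } k\in\mathbb{Z}.
\end{equation*}
Through the identity $\phi(1/w(r))=1/r$ built into \ref{item:assump-B}, this controls the decay rate of $(a_k)$.

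\emph{Step 3: modular estimate and conclusion.} Since $u\leq t^{k+1}$ on $A_k\setminus A_{k+1}$, the layer-cake formula yields
\begin{equation*}
\int_{\R^d}\phi(u(x)/\lambda)\d x\;\leq\;\sum_{k\in\mathbb{Z}}(a_k-a_{k+1})\,\phi(t^{k+1}/\lambda).
\end{equation*}
I would rewrite \eqref{eq:growth-condition} in its equivalent sub-multiplicative form $\phi(ts)\,\phi(\theta/t)\leq\phi(s)$ (obtained by applying \eqref{eq:growth-condition} with $s\leftrightarrow s^p$, $t\leftrightarrow (ts)^p$). Iterating this identity dyadically in $k$ and feeding in the weak-type bound from Step 2, each summand $(a_k-a_{k+1})\phi(t^{k+1}/\lambda)$ is dominated by a geometric multiple in $k$. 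With the specific choice $\lambda=\Theta_t J^{1/p}$, the resulting series sums to at most $1$: the geometric series $\sum_{k\geq 1}t^{-kp}=1/(t^p-1)$ is precisely responsible for the constant $C_p(t)=(t^p-2)/(t^p-1)$ in $\Theta_t$, while the factor $\phi(\theta/t)$ encodes the common ratio between consecutive $\phi$-values. By the definition \eqref{eq:luxemburg-norm} of the Luxemburg norm, this gives $\|u\|_{L^\phi(\R^d)}\leq\Theta_t J^{1/p}$ as desired.

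The embedding $W^{1,p}(\R^d)\hookrightarrow L^\phi(\R^d)$ is then an immediate consequence of the (well-known) embedding $W^{1,p}(\R^d)\hookrightarrow W_\nu^p(\R^d)$, which in turn follows from \eqref{eq:levy-integrable} applied to the classical difference-quotient characterization of $W^{1,p}$.

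The main obstacle is Step 1: the almost-decreasing hypothesis \eqref{eq:almost-decreas} has to be leveraged carefully in order to recover the rearrangement inequality up to the sharp factor $\kappa^2$. Once this nonlocal isoperimetric estimate is in place, Step 2 reduces to a pigeonhole-type argument, and Step 3 becomes a delicate but essentially mechanical bookkeeping exercise whose sole purpose is to reverse-engineer the exact constant $\Theta_t$.
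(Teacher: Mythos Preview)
Your Step~1 is correct and matches Lemma~\ref{lem:inequality-indicator}. The real problem is the passage from Step~2 to Step~3. The regions $A_{k+1}\times A_k^c$ are not disjoint as $k$ varies: if $u(x)>t^{k+2}$ and $u(y)\le t^k$, the pair $(x,y)$ lies in both $A_{k+1}\times A_k^c$ and $A_{k+2}\times A_{k+1}^c$. Thus your pointwise bound $2\kappa^2(t-1)^p t^{kp}a_{k+1}\,w(a_k)^p/a_k\le J$ holds for each $k$ separately, but these inequalities cannot be added. With only a single-$k$ weak-type estimate in hand, ``iterating the sub-multiplicative identity'' in Step~3 does not produce a summable majorant for $\sum_k(a_k-a_{k+1})\phi(t^{k+1}/\lambda)$: the ratio $a_{k+1}/a_k$ is controlled, but nothing prevents the individual $a_k$ from being large on arbitrarily many scales.

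The paper fixes this by slicing the $x$-variable into the \emph{disjoint} shells $D_k=A_k\setminus A_{k+1}$ and summing over $D_i\times D_j$ with $j\le i-2$, which yields a genuine summed lower bound $J\ge 2\kappa^2\sum_k t^{pk}\,d_{k+1}\,w(a_k)^p/a_k$ with $d_k=|D_k|$. The constant $C_p(t)=(t^p-2)/(t^p-1)$ then enters when one replaces $d_{k+1}$ by $a_{k+1}$ via $d_{k+1}=a_{k+1}-\sum_{j\ge k+2}d_j$, Fubini, and the geometric series $\sum_{i\le j-1}t^{p(i-1)}$, using that $r\mapsto w^p(r)/r$ is nonincreasing; so you located the geometric series correctly, but its role is in this disjointification step, not in the modular estimate. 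Finally, the paper's Step~3 does not attack the modular of $\phi$ directly: it passes to $\|u\|_{L^\phi}^p=\|u^p\|_{L^{\phi_p}}$ (Lemma~\ref{lem:psiq-convex}) and then applies a Jensen-type inequality exploiting the convexity of $\phi_p$ (Lemma~\ref{lem:gene-convex}) to go from $\sum_k t^{pk}\frac{a_{k+1}}{a_k}w(a_k)^p$ to $\phi(\theta/t)\sum_k t^{pk}w(d_k)^p$. This use of $\phi_p$---the content of Assumption~C---is what makes the argument close, and it is absent from your sketch.
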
 

\begin{remark}
$(i)$  Note that $C_p(2)>0$ only if $p>1$. However the case $p=1$ is easily included if we take $t>2$. The constant $\Theta_t$  depends explicitly on $p, \phi, \theta, t, \kappa$ and  implicitly on $d,\nu$ since $\phi$  depends on $d,p,\nu$. Moreover, we point out that the reliance on the free variable $t\geq 2$ is purely cosmetic and solely hinges from the approach used in proving Theorem \ref{thm:main-result}. 

\smallskip 

\noindent $(ii)$ The inequality \eqref{eq:main-equality}  is easily obtained in the particular case $u(x)=\mathds{1}_{E}(x)$, with $|E|<\infty$.  In fact, the relations \eqref{eq:invert-cond} and \eqref{eq:luxem-indicator}  imply  $\|\mathds{1}_E\|_{L^\phi(\R^d)}=w(|E|)$.  Thus by Lemma \ref{lem:inequality-indicator} below we get
\begin{align*}
\|\mathds{1}_E\|_{L^\phi(\R^d)}=w(|E|)\leq 2^{1/p}\kappa^{-2/p}|\mathds{1}_E|_{W^p_\nu(\R^d)},  
\end{align*}
which is an analogue of the inequality \eqref{eq:main-equality} as desired. 
\end{remark}
\noindent The embedding $W^{1,p}(\R^d) \hookrightarrow L^\phi(\R^d)$ reminisces the so-called Trudinger inequality \cite{Mos71,Tru67} which implies that, for a smooth set $\Omega\subset \R^d$ and the Young function $\psi(t)=e^{t^{d/(d-1)}}-1$, $p=d>1,$ the embedding $W^{1,p}(\Omega)\hookrightarrow L^\psi(\Omega)$ is continuous; $L^\psi(\Omega)$ is the Orlicz space on $\Omega$ associated with $\psi$. 
A substantial amount  of results appeared in the wake of\cite{Tru67}, dealing with embedding of Sobolev spaces (eventually of Orlicz-Sobolev Spaces)  into Orlicz spaces. A non-exhaustive list of references on this and related topics includes  \cite{Cia96,Cia04,Cia05,Lul17,PR18,CPS20}. 

\vspace{2mm}
Of course, in accordance to the fractional Sobolev inequality \eqref{eq:frac-sobolev-inequality}, we show later in Example \ref{Ex:fractional} that taking the fractional kernel $\nu(h)= |h|^{-d-sp}$ with $sp<d$, which fits the requirements of Theorem \ref{thm:main-result}, gives $\phi(t)= ct^{p^*_s}$ whose corresponding Orlicz space is $L^{p^*_s}(\R^d).$ See also \cite{ACPS21}  for embeddings of fractional Orlicz-Sobolev spaces into Orlicz type spaces. Let us mention that, in this particular case, the critical exponent $p^*_s$ (or the critical Young function $\phi(t)= ct^{p^*_s}$) can be anticipated using an elementary scaling argument. Unfortunately it is not possible to forecast the critical Young function $\phi$ associated with a general kernel $\nu$, using a scaling argument. For instance, if $0<s_1<s_2<1$, the kernel $\nu(h)= \max(|h|^{-d-s_1p}, |h|^{-d-s_2p})$ or $\nu(h)= \min(|h|^{-d-s_1p}, |h|^{-d-s_2p})$, does not permit the use of a scaling argument. 
\noindent Observe however that we obtain the critical Young function $\phi$ in a more constructive, but still less explicit, manner. The abstract aspect of the kernel $\nu$ under consideration forces a more general setting. Therefore, we will see later in Theorem \ref{thm:main-result2} that, Theorem \ref{thm:main-result} still holds under weakened assumptions on $\nu$. For instance, it appears that the radiality and the  decaying condition \eqref{eq:almost-decreas} can be dispensed. 
\vspace{1mm}

It appears as a natural question to know if it is possible to obtain the analog of the Gagliardo-Nirenberg-Sobolev inequality for functions restricted on an open set $\Omega$ different from $\R^d$. The answer to this important question turns out to be strongly related to the so-called Poincar\'{e}-Sobolev type inequality. Another aim of this note, is to formulate some interplay between Gagliardo-Nirenberg-Sobolev type inequalities, Poincar\'{e}-Sobolev type inequalities and Poincar\'{e} type inequalities; see \cite{HK00} for classical case. We summarize the global idea as follows: if $\Omega\subset \R^d$ is sufficiently smooth then under the assumptions of Theorem \ref{thm:main-result} there is $C>0$ also depending on $\Omega$ such that, 
\begin{align*}
\|u-\mbox{$\fint_\Omega u$ }\|_{L^\phi(\Omega)}\leq C\Big(\iil_{\Omega\Omega} |u(x)- u(y)|^p\nu(x-y)\d y\d x\Big)^{1/p}\quad \text{for all }\quad u\in L^\phi(\Omega). 
\end{align*}

\subsection{Outline}The rest of the paper is structured as follows. In Section \ref{sec:preli}, we comment in details our standing assumptions
by explaining their needfulness, and additionally providing some illustrative examples. Section \ref{sec:main-result} is dedicated to the proof of the main result Theorem \ref{thm:main-result} and its generalization in Theorem \ref{thm:main-result2} with relaxed assumptions. This gives us an opportunity to revisit fractional Gagliardo-Nirenberg-Sobolev inequality with an alternative proof. Finally, in Section \ref{sec:poincare-sobolev} we establish some reciprocity relations between Gagliardo-Nirenberg-Sobolev type inequalities, Poincar\'{e}-Sobolev type inequalities and Poincar\'{e} type inequalities. 

\noindent \textbf{Notations:} Through out, $B(x,r) :=\{y\in \R^d:~|y-x|<r\}$ denotes the open ball with radius $r>0$ and centered at $x\in \R^d$ and its closure is denoted by $\overline{B}(x,r)$. On many estimates, $C>0$ is a generic constant depending on the local inputs. 

\section{Miscellaneous}\label{sec:preli}
In this section we discuss the aforementioned main assumptions and provide at the end, some examples of kernels. We also collect some useful basic results on Orlicz spaces needed in the sequel. 
\subsection{Discussion of the assumptions}
Let us first comment on the aforementioned assumptions and explain their necessity.

\vspace{1mm}

\noindent \textbf{Assumption A:} Although, the class of radial and almost decreasing $p$-L\'{e}vy  integrable kernels is fairly large, we will see later that this assumption can be dropped by the mean of the Schwartz symmetrization rearrangement, see Theorem \ref{thm:inequality-indicator}. 
Nevertheless, having an almost decreasing $p$-L\'{e}vy  integrable kernel, allows us to get a quicker constructive approach of the critical Young function $\phi$ as given in \eqref{eq:invert-cond}.
\noindent Beside this, the $p$-L\'{e}vy  integrability condition, i.e., $\nu\in L^1(\R^d, 1\land|h|^p)$, can neither be improved nor completely dropped. Indeed, this condition renders the space $W_\nu^p(\R^d)$ none trivial, i.e., $W_\nu^p(\R^d)\neq\{0\}$ and  more consistent, in a sense that it warrants the space $W_\nu^p(\R^d)$ to contain at least smooth functions of compact support. In fact, as mentioned earlier it can be shown that the $p$-L\'{e}vy  integrability condition is very sharp and self-generated in the  sense that $C_c^\infty(\R^d) \subset W_\nu^p(\R^d)$ if and only if $\nu\in L^1(\R^d, 1\land|h|^p)$. This is the content of the following result whose proof can be found in \cite[Section 4]{Fog23s}; see also the variant in\cite{FK22}. 

\begin{theorem}\label{thm:charac-plevy-radial-bis} 
The following assertions are equivalent. 
\begin{enumerate}[$(i)$]
\item The $p$-L\'{e}vy condition \eqref{eq:plevy-integrable} holds, i.e. $\nu\in L^1(\R^d,1\land|h|^p)$.
\item The embedding $W^{1,p}(\R^d)\hookrightarrow W^p_\nu(\R^d)$ is continuous. 
\item $|u|_{W^p_\nu(\R^d)}<\infty$ for all $u\in W^{1,p}(\R^d)$. 
\item $|u|_{W^p_\nu(\R^d)}<\infty$ for all $u\in C_c^\infty(\R^d)$. 
\item The space $W^p_\nu(\R^d)$ is nontrivial, i.e.,  $W^p_\nu(\R^d)\neq \{0\}$. 
\end{enumerate}
Moreover, this also remains true when $p=1$ with $BV(\R^d)$ in place of $W^{1,1}(\R^d)$.
\end{theorem}

\noindent As illustrated by the next proposition, see details in  \cite[Proposition 2.14]{Fog23} or \cite{Fog20}, the $p$-L\'{e}vy  integrability draws a borderline for which a space  $W_\nu^p(\R^d)$ is trivial or not. 

\begin{proposition} Let $\nu: \R^d\to [0,\infty]$ be symmetric. The following assertions hold true. 
\begin{enumerate}[$(i)$]
\item If $\nu\in L^1(\R^d)$ then $W_\nu^p(\R^d)= L^p(\R^d)$ with equivalence in norm. 
\item Define $C_\delta=\int_{B(0,\delta)} |h|^p\nu(h)\d h$. If $\nu$ is radial and $C_\delta=\infty$ for all $\delta>0$, thus $\nu \not\in L^1(\R^d, 1\land |h|^p)$, then all smooth functions contained in $W_\nu^p(\R^d)$ are constants. 
\item %If $\nu \in L^1(\R^d, 1\land |h|^p)$ and 
If $\nu$ is radial then for $u\in W^{1,p} (\R^d)$ there is $\delta=\delta(u)>0$ depending on $u$ such that 
\begin{align}\label{eq:equiv-sobolev}
2^{-p}K_{d,p} C_\delta|u|_{W^{1,p} (\R^d)} \leq |u|_{W^{p}_\nu (\R^d)} \leq 2\|\nu\|^{1/p}_{L^1(\R^d, 1\land |h|^p)}\|u\|_{W^{1,p} (\R^d)}.
\end{align}
\end{enumerate}
\end{proposition}
\noindent \textbf{Warning!} The estimates in  \eqref{eq:equiv-sobolev} do not imply that $W^{1,p}(\R^d) = W^p_\nu(\R^d)$. 
\vspace{2mm}

\noindent \textbf{Assumption B:} 
The convexity part of the \ref{item:assump-B} turns out to be weaker than that of \ref{item:assump-C}. Indeed, the function $t\mapsto \phi_p(t) = \phi(t^{1/p})$ being a Young function and hence convex and nondecreasing implies that $\phi(t) = \phi_p(t^p)$ is also convex and hence a Young function. In  fact the condition that $\phi_p$ is convex, can be viewed as a fair analog of the condition that $\frac{1}{p^s_*}>0$ in the standard situation where $\nu(h)= s(1-s)|h|^{-d-sp}$.  Furthermore, it is natural to require the function $\phi$ to be invertible as it rules out pathological functions. Note that, assuming $\phi$ is a Young function, one views from \eqref{eq:invert-cond} that $\phi(0)= 0=w(0)$ and $\phi(\infty)= \infty=w(\infty)$ and hence that $t\mapsto \phi(t)$ is invertible from $ [0,\infty)$ to $[0,\infty)$ if and only if $r\mapsto w^p(r)$ is. 
\noindent Therefore, \ref{item:assump-B} is somewhat a great accessory to define the critical Young function $\phi$ and the associated Orlicz space $L^\phi(\R^d)$. 

\vspace{2mm}

\noindent \textbf{Assumption C:} The \ref{item:assump-C} essentially constitutes the most fundamental and quite vital property needed on $\phi$ in order to establish our main result. Next, we explain how the \ref{item:assump-C} globally infers certain growth conditions on $\phi$. First of all, the growth condition \eqref{eq:growth-condition} is clearly equivalent to saying that
\begin{align}\label{eq:growth-phip-bis}
\phi\big(\theta \frac{s}{t}\big)	\leq \frac{\phi(s)}{\phi(t)} \quad \text{for all $s\leq t$}\quad\text{or equally} \quad \theta \leq \phi^{-1}\big(\frac{s}{t}\big)\frac{\phi^{-1}(t)}{\phi^{-1}(s)} \quad \text{for all $s\leq t$}.
\end{align}
The latter suggests that the growth behavior of $\phi$ is not far from that of a polynomial growth \cite{Mag85}; see for instance Proposition \ref{prop:growth-phi} below. 
This behavior can be expected, considering  the inequality of interest \eqref{eq:main-equality} in Theorem \ref{thm:main-result}. Since at the first glance, in comparison with the fractional Sobolev space $W^{s,p}(\R^d)$, one can expect that the space $W^{p}_\nu (\R^d)$ is embedded in another Lebesgue space. Nevertheless, it is worth noticing that fractional kernels of the form $\nu(h)=|h|^{-d-sp}$ with $s\in (0, 1)$ are the only radial functions satisfying \ref{item:assump-A}, \ref{item:assump-B} and \ref{item:assump-C} yielding  polynomial critical Young functions of the form $\phi(t)= ct^q$; see Example \ref{Ex:fractional} and Theorem \ref{thm:fractional} below.  
Observe that letting $s=t\tau$ with $0\leq \tau\leq 1$ then the condition \eqref{eq:growth-condition} (see also \eqref{eq:growth-phip-bis}) is also to equivalent to saying that $\phi$ satisfies sub-multiplicative condition 
\begin{align*}
\phi(\theta \tau)\phi(t)\leq \phi(t\tau )\qquad\text{for all $t\geq 0$ and $\tau\in [0,1]$}
\end{align*}
or that $\phi^{-1}$ satisfies the sub-multiplicative condition 
\begin{align*}
\theta\phi^{-1}(\tau t)\leq\phi^{-1}(\tau ) \phi^{-1}(t )\qquad\text{for all $t\geq 0$ and $\tau\in [0,1]$}.
\end{align*}
% \vspace{2mm}
\noindent Let us now highlight some facts about the Young function $\phi_p(t)= \phi(t^{1/p})$. Assume $\phi$ is given as in \eqref{eq:invert-cond} then taking $r= \phi(t)$ for $t>0$, in virtue of \eqref{eq:defw1} and \eqref{eq:invert-cond} we obtain

\begin{align}\label{eq:rate-function}
\frac{\phi(t)}{t^p}&= \frac{r}{[\phi^{-1}(r)]^p}=rw^p(1/r) 
= \int_{|h|^d\geq\frac{1}{c_d\phi(t)}}\nu(h) \d h. 
\end{align}
\noindent In particular, if $\lim\limits_{t\to 0^+}\phi(t)=0$ and $\lim\limits_{t\to \infty }\phi(t)=\infty$ then 
\begin{align}\label{eq:phip-asymp}
\lim_{t\to \infty}	\frac{\phi(t)}{t^p}= 	\int_{\R^d}\nu(h) \d h \quad\text{and}\quad \lim_{t\to 0^+}	\frac{\phi(t)}{t^p}= \lim_{r\to \infty}\int_{|h|\geq r}\nu(h) \d h=0.
\end{align}

\noindent The convexity of $\phi_p(t)= \phi(t^{1/p})$ induces  certain geometry growths near $0$ and near  $\infty$. 
\begin{proposition}\label{prop:growth-phi} Assume $t\mapsto \phi_p(t)= \phi(t^{1/p})$ is an invertible Young function. 
%Then following assertions are true. 
\begin{enumerate}[$(i)$]
\item $\phi$ is also a Young function. 
\item The mappings $t\mapsto \phi(t)$ and $t\mapsto \frac{\phi(t)}{t^p}$ are increasing. 
\item If $1<p<\infty$ then $\phi$ is an $N$-function. 
\item Let $\delta_0=\frac{\phi(t_0)}{t_0^p}$ for fixed $t_0>0$ then we have 
\begin{align*}
	\phi(t)\leq \delta_0 t^p\quad\text{if ~~ $~~0\leq t\leq t_0$} \qquad \text{and}\qquad 
	\phi(t)\geq \delta_0 t^p\quad\text{if ~~$ ~~t\geq t_0$}. 
\end{align*}
\item Let $\delta'_0=\frac{w^p(r_0)}{r_0}$ for fixed $r_0>0$ then we have 
	\begin{align*}
\int_{B^c(0, \eta(r))} \hspace{-4ex}\nu(h) \d h=\frac{w^p(r)}{r} \geq \delta'_0\quad\text{if~~ $~~ 0\leq r\leq r_0$}\quad\text{and}\quad \int_{B^c(0, \eta(r))}\hspace{-4ex} \nu(h) \d h=\frac{w^p(r)}{r} \leq \delta'_0 \quad\text{ if~~ $~~r\geq r_0$}.
	\end{align*}
\item 	If $\nu\in L^1(\R^d, 1\land |h|^p)$ and $\nu\not \in L^1(\R^d)$ then $\phi_p$ is an $N$-function, equally we have 
	\begin{align*}
	\lim_{t\to 0^+}	\frac{\phi(t)}{t^p}= \lim_{t\to \infty}	\frac{t^p}{\phi(t)}=0.
	\end{align*}
\item 	If $\nu\in L^1(\R^d, 1\land |h|^p)$ is radial, then $r\mapsto \frac{d}{ dr} \Big(\frac{w^p(r)}{r}\Big)$ satisfies the differential equation 
\begin{align}\label{eq:ode-w}
\nu(\eta(r))= -\frac{d}{ dr} \Big(\frac{w^p(r)}{r}\Big)\quad\text{and}\quad \lim_{r\to \infty}	\frac{w^p(r)}{r}= 0.
\end{align}
\end{enumerate}
\end{proposition}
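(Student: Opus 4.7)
The plan is to derive all seven items from two organizing observations: the composition identity $\phi(t)=\phi_p(t^p)$ (valid because $\phi_p$ is a Young function and $p\geq 1$), and the change-of-variable identity extracted from \eqref{eq:invert-cond} and \eqref{eq:rate-function}, namely that the substitution $u=\phi^{-1}(1/r)$ turns $w^p(r)/r$ into $\phi(u)/u^p$. Once these are on the table, most items reduce to exploiting the general Young-function fact that $s\mapsto \phi_p(s)/s$ is nondecreasing on $(0,\infty)$.

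For (i), $\phi$ is the composition of the convex nondecreasing function $\phi_p$ with the convex map $t\mapsto t^p$, hence convex, and $\phi(0)=\phi_p(0)=0$, so $\phi$ is a Young function. Statement (ii) splits into monotonicity of $\phi$, inherited from $\phi_p$ and $t\mapsto t^p$, and monotonicity of $\phi(t)/t^p=\phi_p(t^p)/t^p$, which follows from the nondecreasing character of $s\mapsto \phi_p(s)/s$. Item (iv) is then an immediate rephrasing of (ii) evaluated at $t_0$. For (iii), combining (i)--(ii) gives continuity, convexity and strict monotonicity of $\phi$; it remains to check the limits of $\phi(t)/t$. Writing $\phi(t)/t=t^{p-1}\phi_p(t^p)/t^p$, the fact that $\phi_p(s)/s$ is bounded near $0$ together with $p-1>0$ forces $\phi(t)/t\to 0$ as $t\to 0^+$; conversely, picking $s_1$ with $\phi_p(s_1)>0$ gives $\phi_p(s)/s\geq \phi_p(s_1)/s_1$ for $s\geq s_1$, so $t^{p-1}\to\infty$ delivers $\phi(t)/t\to\infty$.

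Item (vi) is a direct reading of \eqref{eq:phip-asymp}: the hypothesis $\nu\notin L^1(\R^d)$ pushes $\phi(t)/t^p\to +\infty$ at infinity, while $p$-L\'evy integrability provides $\int_{|h|\geq r}\nu\,\d h\to 0$, giving $\phi(t)/t^p\to 0$ at the origin; the substitution $s=t^p$ transports these into the $N$-function asymptotics for $\phi_p$. Item (v) follows at once from (ii): the substitution $u=\phi^{-1}(1/r)$ is strictly decreasing in $r$, converts $w^p(r)/r$ into $\phi(u)/u^p$, and the latter is nondecreasing in $u$ by (ii); comparing at $r_0$ yields the two bounds. For (vii), in spherical coordinates when $\nu$ is radial, $w^p(r)/r=d c_d\int_{\eta(r)}^\infty \nu(\rho)\rho^{d-1}\,\d\rho$; differentiating and using $d c_d\,\eta(r)^{d-1}\eta'(r)=\frac{\d}{\d r}[c_d\eta(r)^d]=1$ produces the ODE, and the limit $w^p(r)/r\to 0$ as $r\to\infty$ is exactly the tail integrability of $\nu$ coming from $\nu\in L^1(\R^d,1\land|h|^p\d h)$.

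The main obstacle is arguably the asymptotic analysis in (iii): one must carefully invoke that $s\mapsto \phi_p(s)/s$ is nondecreasing and then split cases depending on whether its limit at $0$ is zero or a finite positive number, using the extra factor $t^{p-1}$ to close both limits. The only other place requiring care is the constant bookkeeping in (vii); all remaining items are routine consequences of the representation formulas above.
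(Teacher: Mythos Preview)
Your argument is correct and follows essentially the same route as the paper: both derive (i)--(ii) from the composition $\phi=\phi_p(\cdot^p)$ and the convexity/monotonicity of $s\mapsto\phi_p(s)/s$, read (iv)--(v) off (ii) (your substitution $u=\phi^{-1}(1/r)$ is exactly the change of variables behind \eqref{eq:rate-function}), invoke \eqref{eq:phip-asymp} for (vi), and pass to spherical coordinates and differentiate for (vii). Your treatment of (iii) is more explicit than the paper's one-line ``follows from (i) and (ii)'', but the underlying computation---writing $\phi(t)/t=t^{p-1}\cdot\phi_p(t^p)/t^p$ and exploiting $p>1$---is the intended one.
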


\begin{proof}
As an invertible Young function $t\mapsto\phi_p(t) $ is increasing and hence the assertion $(i)$ is clear since $\phi(t) =\phi_p(t^p)$ and $t\mapsto t^p$ is also convex. In view of proving $(ii)$, observe that $\phi$ is increasing as it is an invertible Young function. Since $\phi_p$ is convex and invertible with $\phi_p(0)=0$, we get %\linebreak[-2]
\begin{align*}
\phi_p(s^p)= \phi_p(\frac{s^p}{t^p} t^p)<\frac{s^p}{t^p} \phi_p(t^p) \quad\text{that is}\quad 
\frac{\phi(s)}{s^p} <\frac{\phi(t)}{t^p} ,\quad \text{for all $s<t$}. 
\end{align*}
The assertion $(iii)$ follows from $(i)$ and $(ii)$. The assertions $(iv)$ and $(v)$ are consequences of $(ii)$. Whereas, $(vi)$ clearly follows from \eqref{eq:phip-asymp}. Differentiating the relation \eqref{eq:defw1} gives $(vii)$ since 
%by change of variables we get 
\begin{align*}
\frac{w^p(r)}{r} = \int_{B^c(0, \eta(r))} \nu(h)\d h
= dc_d\int_{\eta(r)}^\infty \nu(\tau)\tau^{d-1}\d \tau = \int_{r}^\infty \nu(\eta(\tau'))\d \tau'. 
\end{align*}
\end{proof} 

\subsection{Basics on Orlicz spaces}Here we collect some basics on Orlicz spaces. We borrow the following result from{\cite[Theorem 5.1.3]{RR91}}. 

\begin{theorem}\label{thm:orlicz-emb}
Let $D\subset\R^d$ be measurable and let $\phi_i,~i=1,2$ be a pair of Young functions. If $|D|<\infty$ $($resp. $|D|=\infty)$ and $\phi_1(t)\leq \phi_2(ct)$ for all $t\geq t_0$ for some $c>0$ and $ t_0>0$ $($resp. $t_0=0)$ then the embedding $L^{\phi_2}(D) \hookrightarrow L^{\phi_1}(D)$ is continuous. The converse holds true as well. 
\end{theorem}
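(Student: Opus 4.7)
\medskip

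\textbf{Proof plan.} I will handle the forward implication first, then derive the converse by testing against characteristic functions.

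For the forward direction, fix $u\in L^{\phi_2}(D)$ and set $\lambda=\|u\|_{L^{\phi_2}(D)}$, so by \eqref{eq:modular-lux} one has $\int_D\phi_2(|u|/\lambda)\,\mathrm{d}x\le 1$. In the unbounded case $|D|=\infty$, $t_0=0$, the pointwise inequality $\phi_1(t)\le\phi_2(ct)$ is valid for every $t\ge 0$, so taking $t=|u(x)|/(c\lambda)$ and integrating gives
\begin{align*}
\int_D\phi_1\!\Big(\tfrac{|u(x)|}{c\lambda}\Big)\,\mathrm{d}x\le\int_D\phi_2\!\Big(\tfrac{|u(x)|}{\lambda}\Big)\,\mathrm{d}x\le 1,
\end{align*}
which by definition of the Luxemburg norm yields $\|u\|_{L^{\phi_1}(D)}\le c\lambda$. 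In the bounded case $|D|<\infty$, $t_0>0$, I split $D$ according to whether $|u(x)|/(c\lambda)\ge t_0$ or $<t_0$. On the first set the comparison $\phi_1(|u|/(c\lambda))\le\phi_2(|u|/\lambda)$ still applies; on the second, monotonicity of $\phi_1$ gives $\phi_1(|u|/(c\lambda))\le\phi_1(t_0)$. Summing the two contributions produces
\begin{align*}
\int_D\phi_1\!\Big(\tfrac{|u(x)|}{c\lambda}\Big)\,\mathrm{d}x\le 1+\phi_1(t_0)|D|=:K.
\end{align*}
If $K>1$, I then use the convexity relation $\phi_1(t/K)\le\phi_1(t)/K$ (which follows from $\phi_1(0)=0$ and convexity) to conclude $\int_D\phi_1(|u|/(cK\lambda))\,\mathrm{d}x\le 1$, hence $\|u\|_{L^{\phi_1}(D)}\le cK\|u\|_{L^{\phi_2}(D)}$. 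In either case the embedding is continuous with an explicit constant.

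For the converse, I would test the assumed embedding $\|u\|_{L^{\phi_1}(D)}\le M\|u\|_{L^{\phi_2}(D)}$ against the characteristic functions $u=\mathbf{1}_E$ for measurable $E\subset D$. By formula \eqref{eq:luxem-indicator} this inequality rewrites as $1/\phi_1^{-1}(1/|E|)\le M/\phi_2^{-1}(1/|E|)$, i.e.
\begin{align*}
\phi_2^{-1}(1/|E|)\le M\,\phi_1^{-1}(1/|E|).
\end{align*}
Setting $t=\phi_1^{-1}(1/|E|)$ and applying $\phi_2$ to both sides gives $\phi_1(t)\le\phi_2(Mt)$. As $|E|$ sweeps through the admissible range of subset measures of $D$, the corresponding values of $t$ sweep through $[\phi_1^{-1}(1/|D|),\infty)$ when $|D|<\infty$, and through $(0,\infty)$ when $|D|=\infty$, matching precisely the threshold $t_0$ in the statement (with $c=M$).

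The one delicate technical point is the convexity-rescaling step $\phi_1(t/K)\le K^{-1}\phi_1(t)$ which is needed only in the finite-measure case and requires $K\ge 1$; this is automatic since $1+\phi_1(t_0)|D|\ge 1$. Everything else reduces to direct manipulation of the Luxemburg modular and the explicit norm of a characteristic function from \eqref{eq:luxem-indicator}, so no deeper machinery is required.
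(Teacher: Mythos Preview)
Your forward direction coincides with the paper's: the same splitting of $D$ at the threshold $|u|/(c\lambda)=t_0$, the same constant $K=1+\phi_1(t_0)|D|$ (the paper calls it $T$), and the same use of the convexity relation $\phi_1(t/K)\le K^{-1}\phi_1(t)$.

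Your converse, however, takes a genuinely different route. The paper argues by contradiction: assuming no such $c,t_0$ exist, it extracts an increasing sequence $(t_k)$ with $\phi_1(t_k)>\phi_2(2^k k^2 t_k)$, picks disjoint subsets $D_k\subset D$ of carefully prescribed measures $|D_k|=\phi_2(t_1)|D_0|/(2^k\phi_2(k^2t_k))$, and shows that the explicit function $u=\sum_k k\, t_k\,\mathds{1}_{D_k}$ lies in $L^{\phi_2}(D)$ but not in $L^{\phi_1}(D)$. Your route through characteristic functions and the identity \eqref{eq:luxem-indicator} is shorter and more transparent, and it delivers the explicit constant $c=M$ directly. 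The trade-off is that \eqref{eq:luxem-indicator} and the passage from $\phi_2^{-1}(1/|E|)\le M\phi_1^{-1}(1/|E|)$ to $\phi_1(t)\le\phi_2(Mt)$ tacitly assume that $\phi_1,\phi_2$ are genuinely invertible (continuous, strictly increasing, finite on $(0,\infty)$), and that $t=\phi_1^{-1}(1/|E|)$ actually sweeps through the whole half-line as $|E|$ varies. For general Young functions that may be constant on intervals or jump to $\infty$, one must work with generalized inverses and argue separately on the flat and jump regions; the paper's counterexample construction sidesteps these technicalities entirely. For the $N$-functions that actually arise in the applications of this paper your argument is complete and arguably preferable.
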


\begin{proof}The case $|D|=\infty$ and $t_0=0$ is straightforward and one has $\|u\|_{L^{\phi_1}(D)}\leq c\|u\|_{L^{\phi_2}(D)}.$ Now, assume $|D|<\infty$, for $u\in L^{\phi_2}(D)$, consider $A=\{x\in D: |u(x)|\leq ct_0 \|u\|_{L^{\phi_2}(D)}\}$ and put $T= \phi_1(t_0)|D|+1$. Since $\phi_1(\frac{t}{T})\leq \frac{1}{T}\phi_1(t)$ for $t>0$, recalling \eqref{eq:modular-lux}, one gets 
\begin{align*}
\int_D \phi_1\Big(\frac{|u(x)| }{Tc \|u\|_{L^{\phi_2}(D)}}\Big)\d x
&= \int_A \phi_1\Big(\frac{|u(x)|}{Tc \|u\|_{L^{\phi_2}(D)}}\Big)\d x+ \int_{D\setminus A} \phi_1\Big(\frac{|u(x)|}{Tc \|u\|_{L^{\phi_2}(D)}}\Big)\d x\\
&\leq \frac{1}{T}\Big(\phi_1(t_0)|A|+ \int_{D\setminus A} \phi_2\Big(\frac{|u(x)|}{ \|u\|_{L^{\phi_2}(D)}}\Big)\d x\Big)\\
&\leq \frac{1}{T}\big(\phi_1(t_0)|D|+1\big)=1. 
\end{align*}
Accordingly, this implies that $\|u\|_{L^{\phi_1}(D)}\leq cT\|u\|_{L^{\phi_2}(D)}.$
\noindent Conversely, assume there is no constant $c>0$ such that $\phi_1(t) \leq \phi_2(ct)$ for all $t>t_0>0$. Then one can construct an increasing sequence $0<t_0<\cdots <t_k<t_{k+1}\cdots $ such that $\phi_1(t_k)>\phi_2(2^kk^2t_k)$. In particular, $\phi_2(t_k)>0$ and the convexity implies $\phi_1(t_k)>2^k \phi_2(k^2t_k)$. Fix $D_0\subset D$ such that $0<|D_0|<\infty$ and let $D_k\subset D_0$ be disjoint measurable sets such that $|D_k|>0$ and 
\begin{align*}
|D_k|=\frac{\phi_2(t_1) |D_0|}{2^k\phi_2(k^2t_k)},\quad\text{and hence } \quad
\sum_{k=1}^\infty 	|D_k|<|D_0|.
\end{align*}

\noindent We claim that the function $u = \sum_{k=1}^\infty k t_k\mathds{1}_{D_k}$ (which is supported in $D_0$) belongs in $L^{\phi_2}(\R^d)$ but not in $L^{\phi_1}(\R^d)$. Indeed, for any integer $n\geq 1$ we have 
\begin{align*}
\int_D \phi_2(n u(x))\d x &= \sum_{k=1}^\infty \phi_2(n k t_k)|D_k|\leq \sum_{k=1}^n \phi_2(n k t_k)|D_k| + \sum_{k\geq n+1} \phi_2( k^2 t_k)|D_k|\\
&= \sum_{k=1}^n \phi_2(n k t_k)|D_k| + \phi_2(t_1) |D_0|\sum_{k\geq n+1}\frac{1}{2^k}<\infty. 
\end{align*}
Thus  $u\in L^{\phi_2}(\R^d)$. However, for any $\eps>0$, recalling that $\phi_1(t_k)>2^k \phi_2(k^2t_k)$ we have 
\begin{alignat*}{2}
\int_D \phi_1(\eps u(x))\d x &\geq \sum_{k\geq \frac{1}{\eps} } \phi_1(\eps k t_k)|D_k|&&\geq\sum_{k\geq \frac{1}{\eps} } \phi_1( t_k)|D_k| \quad \text{(since $~~\eps k\geq 1$)}\\
& \geq \sum_{k\geq \frac{1}{\eps} } 2^k \phi_2(k^2t_k)|D_k| &&= \phi_2(t_1) |D_0| \sum_{k\geq \frac{1}{\eps} } 1= \infty. 
\end{alignat*}
This implies $u\not\in L^{\phi_1}(\R^d)$. Hence $ L^{\phi_2}(\R^d)\not\hookrightarrow L^{\phi_1}(\R^d)$.  The proof is complete.
\end{proof}

\smallskip

\begin{corollary}\label{cor:embed-loc} Assume $t\mapsto \phi_p(t)= \phi(t^{1/p})$ is an invertible Young function, then the embedding 
$L^\phi(\R^d)\hookrightarrow L^p_{\operatorname{loc}}(\R^d)$ is continuous. 

\end{corollary}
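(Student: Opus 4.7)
The plan is to deduce this local embedding as a direct application of Theorem \ref{thm:orlicz-emb}, by comparing the critical Young function $\phi$ with the power $t^p$ (whose associated Orlicz space is $L^p$) in the large-$t$ regime. Since $L^p_{\operatorname{loc}}(\R^d)$ is defined by restriction to balls $B(0,R)$ of finite measure, the embedding will be checked on each such ball.

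The first step is to exploit Proposition \ref{prop:growth-phi}(ii), which asserts that the ratio $t\mapsto \phi(t)/t^p$ is (strictly) increasing. Fix any threshold $t_0>0$ and set $\delta_0:=\phi(t_0)/t_0^p>0$; then $\phi(t)\geq \delta_0 t^p$ for every $t\geq t_0$. Next, choose $c:=\max(1,\delta_0^{-1/p})$, so that $c\geq 1$ and $c^p\delta_0\geq 1$. For $t\geq t_0$ we then have $ct\geq t_0$, and applying the same monotonicity gives
\[
\frac{\phi(ct)}{(ct)^p}\geq \frac{\phi(t_0)}{t_0^p}=\delta_0,
\quad\text{hence}\quad
\phi(ct)\geq c^p\delta_0\, t^p\geq t^p.
\]
Setting $\phi_1(t):=t^p/p$ (the Young function whose Orlicz space is $L^p$) and $\phi_2:=\phi$, we obtain $\phi_1(t)\leq \phi_2(ct)$ for all $t\geq t_0$, which is exactly the comparison hypothesis required by Theorem \ref{thm:orlicz-emb}.

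Finally, I would apply Theorem \ref{thm:orlicz-emb} to $D=B(0,R)$ for arbitrary $R>0$ (which has finite Lebesgue measure). This yields the continuous embedding $L^\phi(B(0,R))\hookrightarrow L^p(B(0,R))$ with an explicit constant of the form $cT=c(\phi_1(t_0)|B(0,R)|+1)$ depending on $R$. Since every compact $K\subset \R^d$ lies inside some $B(0,R)$, this proves $L^\phi(\R^d)\hookrightarrow L^p_{\operatorname{loc}}(\R^d)$ continuously.

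There is really no obstacle here; the only minor point to flag is that the embedding constant depends on $|B(0,R)|$ and therefore grows with $R$, which is consistent with (and intrinsic to) an $L^p_{\operatorname{loc}}$-type statement rather than a global $L^p$ one. Note also that invertibility of $\phi_p$ is used implicitly to apply Proposition \ref{prop:growth-phi}(ii); the case $p=1$ is unaffected since $t^p/p=t$ is still a Young function.
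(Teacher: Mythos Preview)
Your proof is correct and follows essentially the same route as the paper: invoke Proposition~\ref{prop:growth-phi} to obtain $\phi(t)\geq \delta_0 t^p$ for $t\geq t_0$, then apply Theorem~\ref{thm:orlicz-emb} on sets of finite measure. The paper's proof is more terse (it does not spell out the constant $c$ needed to match the hypothesis $\phi_1(t)\leq\phi_2(ct)$ of Theorem~\ref{thm:orlicz-emb}), but the argument is the same.
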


\begin{proof}
By Proposition \ref{prop:growth-phi}, $\phi(t)\geq\delta_0 t^p $ for all $t\geq 1$. The claim follows  from Theorem \ref{thm:orlicz-emb}. 
\end{proof}

\noindent
Let us recall without proof the characterization of  the intersection and the sum of Orlicz spaces, see \cite{RGMP16}.
Let $\phi_i,~i=1,2$ be a pair of Young functions. 
The function $\phi_{\max}(t)= \max(\phi_1(t), \phi_2(t)) $ is a Young function and  $L^{\phi_1}(\R^d)\cap L^{\phi_2}(\R^d) =L^{\phi_{\max}} (\R^d)$. Moreover, if we equip $L^{\phi_1}(\R^d)\cap L^{\phi_2}(\R^d) $ with $\|\cdot\|_{L^{\phi_1}(\R^d)\cap L^{\phi_2}(\R^d)}= \max(\|\cdot\|_{L^{\phi_1}(\R^d)},\|\cdot\|_{ L^{\phi_2}(\R^d) })$, there holds that
\begin{align}\label{eq:orlicz-inter-max}
\frac12\|u\|_{ L^{\phi_{\max}} (\R^d) }
\leq \|u\|_{L^{\phi_1} (\R^d)\cap L^{\phi_2} (\R^d)}\leq \|u\|_{L^{\phi_{\max}}(\R^d)}. 
\end{align} 		
\noindent In addition, for any other Young function $\psi$ such that $\psi(t)\leq \phi_{\max}(ct)$ for some $c>0$, then by Theorem \ref{thm:orlicz-emb} the embedding $L^{\phi_1}(\R^d)\cap L^{\phi_2}(\R^d) \hookrightarrow L^\psi(\R^d)$ is continuous.
Analogously, for the function $\phi_{\min}(t)= \min(\phi_1(t), \phi_2(t)) $ we have $L^{\phi_1}(\R^d)+L^{\phi_2}(\R^d)=L^\phi(\R^d)$. Note however, that $\phi_{\min}$ is not necessarily convex and thus, is identified with its greatest convex minorant $\phi_{\min_0}$,
\begin{align*}
\phi_{\min_0}(t)=\int_0^t\frac{\phi_{\min}(s)}{s}\d s= \int_0^t\frac{\min(\phi_1(s), \phi_2(s))}{s}\d s. 
\end{align*}
So that, $\phi_{\min_0}(t)\leq \phi_{\min}(t)\leq\phi_{\min_0}(2t)$ and hence $L^{\phi_{\min_0}}(\R^d)=L^{\phi_{\min}}(\R^d)$. In what follows, we identify $\phi_{\min}$ with $\phi_{\min_0}.$
Moreover,  one can check that 
\begin{align*}
& \frac14\|u\|_{L^{\phi_{\min}}(\R^d)}\leq \|u\|_{L^{\phi_1}(\R^d)+L^{\phi_2}(\R^d) }\leq 2\|u\|_{L^{\phi_{\min}}(\R^d)},
\end{align*} 
where we recall that  $\|\cdot\|_{L^{\phi_1}(\R^d)+L^{\phi_2}(\R^d) }$ is the natural norm on $L^{\phi_1}(\R^d)+L^{\phi_2}(\R^d)$
\begin{align*}
&\|u\|_{L^{\phi_1}(\R^d)+L^{\phi_2}(\R^d) }
=\inf\big\{\|u_1\|_{L^{\phi_1}(\R^d)}+ \|u_2\|_{L^{\phi_2}(\R^d) }: \,\, u= u_1+u_2, \, u_i\in L^{\phi_1}(\R^d)\big\}.
\end{align*}
\noindent In the particular case of Lebesgue spaces,  $L^{p_1}(\R^d)\cap L^{p_2}(\R^d)$ resp. $L^{p_1}(\R^d)+ L^{p_2}(\R^d) $, $1\leq p_1\leq p_2$,  is the Orlicz space associated with the function $t\mapsto \max(t^{p_1}, t^{p_2})$ resp. $t\mapsto \min(t^{p_1}, t^{p_2})$. Moreover, observe that we have $q\in [p_1, p_2]$ if and only if  $ \min(t^{p_1}, t^{p_2})\leq t^q\leq \max(t^{p_1}, t^{p_2})$ for all $t>0$. Whence we deduce from Theorem \ref{thm:orlicz-emb} that  $L^{p_1}(\R^d)\cap L^{p_2}(\R^d) \hookrightarrow L^q(\R^d)\hookrightarrow L^{p_1}(\R^d)+L^{p_2}(\R^d) $.

\vspace{2mm}

\subsection{Examples of kernels} Let us provide examples for which the main inequality \eqref{eq:main-equality} holds. First of all, we deal with the standard fractional kernel $\nu(h)= |h|^{-d-sp}$, which allows us to recover the classical fraction Gagliardo-Nirenberg-Sobolev inequality. 

\begin{example} \label{Ex:fractional}
For $ s\in (0,1)$, consider the kernel $\nu(h)= |h|^{-d-sp}$, $h\neq 0$ so that $W_\nu^p(\R^d)= W^{s,p}(\R^d)$. A painless computation through polar coordinates in \eqref{eq:defw1} and \eqref{eq:invert-cond} yields that 
\begin{align}\label{eq:fractional-case}
w(r) = \gamma^{1/p}_s\,\, r^{1/p^*_s}\qquad \text{and}\qquad \phi(t) =\gamma^{p^*_s/p}_s \,\,t^{p^*_s},
\end{align}
\noindent where, recalling $c_d=|B(0,1)|$, we set
\begin{align*}
\frac{1}{p^*_s}=\frac{1}{p}-\frac{s}{d} \qquad \text{and}\qquad\gamma_s= \frac{d c_d^{1+\frac{sp}{d}} }{sp} . 
\end{align*}
Observe that $1/p^*_s>0$ if and if $p^*_s\geq p\geq 1$ and, hence if and only if $\phi_p(t)= \phi(t^{1/p}) =\gamma_s^{p^*_s/p} t^{p^*_s/p}$ is convex. Moreover, for all $s,t>0$ we have 
\begin{align*}
\phi_p\big(\theta^p\frac{s}{t}\big)= \frac{\phi_p(s)}{\phi_p(t)}\quad\text{with }\quad \theta =\frac{1}{ \gamma_s^{1/p}}. 
\end{align*}
Whence, \ref{item:assump-A}, \ref{item:assump-B} and \ref{item:assump-C} are fulfilled provided that $1/p^*_s>0.$ 
\end{example}

\begin{example} Assume $\nu\in L^1(\R^d)$ is radial and has full support so that $\phi$ exists. The relation \eqref{eq:rate-function} implies $\phi(t)\leq \|\nu\|_{L^1(\R^d)} t^p $ for all $t>0$. Whence, according to Theorem \ref{thm:orlicz-emb} we get the continuous embedding $L^p(\R^d)\hookrightarrow L^\phi(\R^d)$ and we have
\begin{align*}
\|u\|_{L^\phi(\R^d)} \leq \|\nu\|^{1/p}_{L^1(\R^d)} \|u\|_{L^p(\R^d)}\quad \text{for all $u\in L^\phi(\R^d)$.}
\end{align*}
Together with Corollary \ref{cor:embed-loc}, we get the continuous embeddings $L^p(\R^d)\hookrightarrow L^\phi(\R^d)\hookrightarrow L^p_{\operatorname{loc}}(\R^d)$. 
\end{example}

\begin{example}
\noindent For concrete examples, consider the Young function $\phi^a(t) = \ln (a+ e^{t^p})-\ln(a+1)$ here $a>0$ is a fixed parameter. Note that $\phi(t)\leq t^p $, for all $t>0$. Clearly $\phi^a_p(t) = \phi^a(t^{1/p}) =\ln (a+ e^{t})-\ln(a+1)$ is also a Young function. Moreover, each $\phi^a$ satisfies \eqref{eq:growth-condition} with $\theta=1$. Last one defines the kernel $\nu^a\in L^1(\R^d) $ associated with $\phi^a$ through the relation 
\begin{align*}
\nu^a(\eta(r)) = -\frac{d}{d r} \Big(\frac{1}{r\xi^a(r)}\Big) \quad\text{with $\xi^a(r) := [\phi^a_p]^{-1}(1/r)= \ln((a+1)e^{1/r}- a)$}. 
\end{align*}
\noindent Each $\nu^a$ fulfills \ref{item:assump-A}, \ref{item:assump-B} and \ref{item:assump-C}.
\end{example}

\noindent The next example exhibits a situation where the growth condition \eqref{eq:growth-condition} fails but the inequality \eqref{eq:main-equality} still holds true with a possibly different constant. 
\begin{example}\label{Ex:fractional-max}
Fix $0<s_1<s_2<1$, following the notations of Example \ref{Ex:fractional}, we define the Young function $\phi(t)= \max(t^{p^*_{s_1}} , t^{p^*_{s_2}})$, with $1/p^*_{s_i} >0$, $i=1,2$ so that $L^\phi(\R^d) = L^{p^*_{s_1}}(\R^d)\cap L^{p^*_{s_2}}(\R^d)$. Clearly, $\phi_p(t) = \phi(t^{1/p})$ is convex since $p^*_{s_2}>p^*_{s_1}> p$.  Moreover the relation \eqref{eq:invert-cond} gives 
\begin{align*}
w^p(r)= \frac{1}{\phi_p^{-1}(1/r)}= \max(r^{p/p^*_{s_1}} , r^{p/p^*_{s_2}}). 
\end{align*}

\noindent Now, we differentiate the relation \eqref{eq:defw1} and put $\rho=\eta(r),$ equally $r=c_d\rho^d$ to obtain that
\begin{align*}
\nu(\rho)= \nu(\eta(r))= -\frac{d}{ dr} \Big(\frac{w^p(r)}{r}\Big)
= \begin{cases} \frac{1}{	\gamma_{s_2}}\rho^{-d-s_2p}& \text{if}\,\, \rho<\eta(1),\\
\frac{1}{	\gamma_{s_1}}\rho^{-d-s_1p} & \text{if}\,\, \rho\geq \eta(1).
\end{cases}
\end{align*}

\noindent Whence the kernel $\nu$ is given by 
\begin{align*}
\nu(h)= \frac{1}{	\gamma_{s_2}}\mathds{1}_{B(0, \,\eta(1) ) }(h) \,|h|^{-d-s_2p}+ \frac{1}{	\gamma_{s_1}}\mathds{1}_{B^c(0, \,\eta(1) ) }(h) \, |h|^{-d-s_1p}. 
\end{align*} 
\noindent One easily finds that $c_1\nu(h)\leq \max( |h|^{-d-s_1p}, |h|^{-d-s_2p})\leq |h|^{-d-s_1p}+ |h|^{-d-s_2p}\leq c_2\nu(h)$ for some constants $c_1, c_2>0$.
Note however, that the growth condition \eqref{eq:growth-condition} cannot hold here, i.e., there is no constant $\theta>0$ such that 
\begin{align*}
\phi\big(\theta\frac{s}{t}\big)	\leq \frac{\phi(s)}{\phi(t)} \quad\text{for all $s\leq t$}.
\end{align*}
It suffices to take $s=1$ and tend $t\to\infty$ to observe a contradiction. Nevertheless, according to Example \ref{Ex:fractional}, Theorem \ref{thm:main-result} applies on the kernels $|h|^{-d-s_ip}\leq c_2 \nu(h)$, $i=1,2$ and since by \eqref{eq:orlicz-inter-max}, $L^\phi(\R^d) = L^{p^*_{s_1}}(\R^d)\cap L^{p^*_{s_2}}(\R^d)$ and $\|u\|_{L^\phi(\R^d)} \leq 2\max\big(\|u\|_{L^{p^*_{s_1}}(\R^d)}, \|u\|_{L^{p^*_{s_2}}(\R^d)}\big)$, we get
\begin{align*}
\begin{split}
\|u\|_{L^\phi(\R^d)} 
\leq C\Big(\iil_{\R^d\R^d} |u(x)-u(y)|^p\nu(x-y)\d y\d x\Big)^{1/p}
\end{split}\quad\text{for all}\quad u \in L^\phi(\R^d).
\end{align*}
In other words, inequality \eqref{eq:main-equality} still holds despite the failure of the growth condition \eqref{eq:growth-condition}. 
\end{example}

%\vspace{2mm}
\noindent The next example shows that the lack of convexity can sometimes be rectified. 
\begin{example} \label{Ex:fractionalmin}
Fix $0<s_1<s_2<1$, considering the notations of Example \ref{Ex:fractional} define $ \phi(t)= \min(t^{p^*_{s_1}} , t^{p^*_{s_2}})$, with $p^*_{s_i}>0,\, i=1,2$. Note that $\phi_p(t) = \phi(t^{1/p})$ is not necessarily convex. However one rectifies this deficiency by defining 
\begin{align*}
\phi_{\min}(t)= \int_0^t\frac{\min(s^{p^*_{s_1}}, s^{p^*_{s_2}})}{s}\d s \quad\text{so that }\quad \phi_{\min}(t^{1/p})=\frac1p \int_0^t\frac{\min(s^{p^*_{s_1}/p}, s^{p^*_{s_2}/p})}{s}\d s. 
\end{align*}
\noindent Since, $p^*_{s_2}\geq p^*_{s_1}\geq p$, one readily obtains that $t\mapsto \phi_{\min}(t^{1/p})$ is convex. Furthermore, 
\begin{align*}
& \phi\big(\frac{s}{t}\big)	\leq \frac{\phi(s)}{\phi(t)} \quad\text{for all $s\leq t$ \quad and }\quad \phi_{\min}(t)\leq \phi(t)\leq\phi_{\min}(2t)\quad\text{for all $t\geq 0$}. 
\end{align*}
\noindent Combining altogether implies that 
\begin{align*}
\phi_{\min}\big(\frac{s}{2t}\big)\leq \phi\big(\frac{s}{2t}\big)	\leq \frac{\phi(s/2)}{\phi(t)}\leq \frac{\phi_{\min}(s)}{\phi_{\min}(t)} \quad\text{for $s\leq t$.} 
\end{align*}
\noindent It turns out that, $\phi_{\min}$ satisfies \eqref{eq:growth-condition} with $\theta=\frac{1}{2}$ and thus the \ref{item:assump-C}. Therefore, it is fair to identify $\phi$ with $\phi_{\min}$ so that $L^\phi(\R^d)= L^{\phi_{\min}} (\R^d)= L^{p^*_{s_1}}(\R^d)+L^{p^*_{s_2}}(\R^d)$. Next, we find the kernel associated with $\phi(t)= \min(t^{p^*_{s_1}} , t^{p^*_{s_2}})$. The relation \eqref{eq:invert-cond} yields
\begin{align*}
w^p(r)= \frac{1}{\phi_p^{-1}(1/r)}= \min(r^{p/p^*_{s_1}} , r^{p/p^*_{s_2}}). 
\end{align*}

\noindent Using the relation \eqref{eq:ode-w} and put $\rho=\eta(r),$ equally $r=c_d\rho^d$ to obtain
\begin{align*}
\nu(\rho)= \nu(\eta(r))= -\frac{d}{ dr} \Big(\frac{w^p(r)}{r}\Big)
&= \begin{cases} \frac{1}{	\gamma_{s_2}} \rho^{-d-s_2p}& \text{if}\,\, \rho\geq\eta(1),\\
\frac{1}{	\gamma_{s_1}}\rho^{-d-s_1p} & \text{if}\,\, \rho< \eta(1).
\end{cases}
\end{align*}
\noindent Whence, the kernel $\nu$ is given by 
\begin{align*}
\nu(h)= \frac{1}{	\gamma_{s_1}}\mathds{1}_{B(0, \,\eta(1) ) }(h) \,|h|^{-d-s_1p}+ \frac{1}{	\gamma_{s_2}}\mathds{1}_{B^c(0, \,\eta(1) ) }(h) \, |h|^{-d-s_2p}. 
\end{align*}
\noindent One easily finds that $c_1\nu(h)\leq \min( |h|^{-d-s_1p}, |h|^{-d-s_2p}) \leq c_2\nu(h)$ for some constants $c_1, c_2>0$. Thus, identifying $\phi$ and $\phi_{\min}$ \ref{item:assump-A} , \ref{item:assump-B} and \ref{item:assump-C} are satisfied. 
\end{example}
\begin{remark}
There are two key geometric observations emanating from Example \ref{Ex:fractional}, Example \ref{Ex:fractional-max} and Example \ref{Ex:fractionalmin}. Firstly, modifying the $p$-L\'{e}vy  integrable kernel $\nu$ at the origin or at the infinity may not change the topology of the nonlocal Sobolev space $W_\nu^p(\R^d)$. 

\vspace{1mm}
\noindent Secondly, the geometric behavior of the kernel $\nu$ at the origin or at the infinity truly governs that of the associated critical function $\phi$ and hence influences the topology of the Orlicz space $L^\phi(\R^d)$. In other words
a perturbation of the kernel $\nu$ at the origin or at the infinity can drastically change the resulting associated Orlicz space (or associated critical function). This geometric behavior also reads  through the relation \eqref{eq:rate-function} which implies that
\begin{align*}%\label{eq:phip-asymp}
\lim_{t\to \infty}	\frac{\phi(t)}{t^p}= 	\int_{|h|\geq \eta(\frac{1}{\phi(\infty)}) }\nu(h) \d h \quad\text{and}\quad \lim_{t\to 0^+}	\frac{\phi(t)}{t^p}=\int_{|h|\geq \eta(\frac{1}{\phi(0)})}\nu(h) \d h.
\end{align*}
\end{remark}

\section{Main results}\label{sec:main-result} 
\noindent 
With a view to establish our main result, 
we need auxiliary results that are the milestones to prove Theorem \ref{thm:main-result}. We begin with the following important lemma. 

\begin{lemma}\label{lem:inequality-indicator}
Assume $\nu$ is almost decreasing, i.e., satisfies \eqref{eq:almost-decreas}. The following estimate holds
\begin{align*}
\essinf_{x\in\R^d}	\int_{E^c} \nu(x-y)\d y \geq 
\kappa^2 \frac{w^p(|E|)}{|E|}, \qquad \forall\,\, \text{$E\subset \R^d$ meas., $|E|<\infty$}.
\end{align*}

\end{lemma}

\begin{proof}
Let the ball $B(0,r_E)$ centered at the origin with radius $r_E$, be the symmetric rearrangement of $E$ that is $|E|=|B(0,r_E)|$, equally $r_E= (\frac{|E|}{c_d})^{1/d}= \eta(|E|)$ where $c_d=|B(0,1)|$. Noticing that $A\setminus B=A\setminus (A\cap B)$, one gets
\begin{align*}
\big|B(x,r_E)\setminus E\big|= \big|B(x,r_E)\big|- \big| E\cap B(x,r_E) \big|= \big|E\big|- \big| E\cap B(x,r_E) \big|= \big|E\setminus B(x,r_E) \big|. 
\end{align*}

\noindent Accordingly, using the fact that $\nu $ is almost decreasing, we get the sought estimate as follow
\begin{align*}
\int_{E^c} \nu(x-y)\d y&= 	\int_{E^c\cap B(x, r_E)} \nu(x-y)\d y+	\int_{E^c\cap B^c(x, r_E)} \nu(x-y)\d y\\
&\geq \kappa \nu(r_E)|E^c\cap B(0, r_E)|+	\int_{E^c\cap B^c(x, r_E)} \nu(x-y)\d y\\
&= \kappa \nu(r_E)|E\cap B^c(0, r_E)|+	\int_{E^c\cap B^c(x, r_E)} \nu(x-y)\d y\\
&\geq \kappa^2 \int_{E\cap B^c(x, r_E)} \hspace{-2ex}\nu(x-y)\d y +	\int_{E^c\cap B^c(x, r_E)} \hspace{-3ex}\nu(x-y)\d y\\
&\geq \kappa^2 \int_{B^c(x, r_E)} \nu(x-y)\d y = \kappa^2 \frac{w^p(|E|)}{|E|}.
\end{align*}
\end{proof}

\noindent We also need the following lemma dealing with convexity of the critical function $\phi$. 
\begin{lemma}\label{lem:gene-convex}
Assume that \ref{item:assump-C} is satisfied. Let $(a_k)_{k\in \mathbb{Z}}$ be a nonincreasing nonnegative sequence, i.e., $0\leq a_{k+1}\leq a_k$, and $T>0$. Then the following estimate holds true 
\begin{align}\label{eq:gene-convex}
\phi_p(\frac{\theta^p}{T})\sum_{k\in \mathbb{Z}} \Big( \frac{1}{\phi^{-1}(1/a_k)}\Big)^p T^{k} \leq \sum_{k\in \mathbb{Z}} \frac{a_{k+1}}{a_k} \Big( \frac{1}{\phi^{-1}(1/a_k)}\Big)^p T^{k}.
\end{align}
\end{lemma}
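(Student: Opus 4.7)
My plan is to recast the inequality in cleaner variables, apply the growth condition \eqref{eq:growth-condition} once to obtain a term-by-term comparison, and close the argument with Jensen's inequality.

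First, I would abbreviate $\xi_k := \phi_p^{-1}(1/a_k)$ and $v_k := T^k/\xi_k$, so that $\bigl(1/\phi^{-1}(1/a_k)\bigr)^p = 1/\xi_k$ (recalling $\phi_p(t)=\phi(t^{1/p})$), and the target \eqref{eq:gene-convex} becomes $\phi_p(\theta^p/T)\sum_k v_k \leq \sum_k (a_{k+1}/a_k)\, v_k$. Because $(a_k)$ is nonincreasing and $\phi_p^{-1}$ is nondecreasing, the sequence $(\xi_k)$ is nondecreasing, so I may apply \eqref{eq:growth-condition} with $s = \xi_k \leq t = \xi_{k+1}$; using $\phi_p(\xi_k) = 1/a_k$ this yields
\[
\phi_p\!\left(\tfrac{\theta^p\, \xi_k}{\xi_{k+1}}\right) \;\leq\; \frac{\phi_p(\xi_k)}{\phi_p(\xi_{k+1})} \;=\; \frac{a_{k+1}}{a_k}.
\]
Since $\xi_k/\xi_{k+1} = v_{k+1}/(T v_k)$, multiplying by $v_k$ and summing over $k$ gives the pointwise bound
\[
\sum_{k\in\mathbb{Z}} \frac{a_{k+1}}{a_k}\, v_k \;\geq\; \sum_{k\in\mathbb{Z}} v_k\, \phi_p\!\left(\tfrac{\theta^p}{T}\cdot\tfrac{v_{k+1}}{v_k}\right).
\]

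To finish, I would bound this last sum below by $\phi_p(\theta^p/T)\sum_k v_k$. Assume first that $W := \sum_k v_k$ is finite and positive (the degenerate cases are immediate) and set $w_k := v_k/W$, so that $(w_k)$ is a system of probability weights. Jensen's inequality for the convex function $\phi_p$ then gives
\[
\sum_k w_k\, \phi_p\!\left(\tfrac{\theta^p}{T}\cdot\tfrac{v_{k+1}}{v_k}\right) \;\geq\; \phi_p\!\left(\tfrac{\theta^p}{T}\sum_k w_k\,\tfrac{v_{k+1}}{v_k}\right),
\]
and the crucial computation is the telescoping identity $\sum_k w_k\, v_{k+1}/v_k = W^{-1}\sum_k v_{k+1} = 1$. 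This turns the Jensen lower bound into exactly $\phi_p(\theta^p/T)$; multiplying by $W$ and chaining with the previous display proves \eqref{eq:gene-convex}. The case $W = +\infty$ is handled by applying the same argument to symmetric partial sums $\sum_{|k|\leq N}$ and passing $N\to\infty$, since then the right-hand side of \eqref{eq:gene-convex} can also be shown to diverge via the pointwise bound above.

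The main point to watch is spotting that Jensen's inequality is the right tool and that the simple reindexing $\sum_k v_{k+1} = \sum_k v_k$ is precisely what produces the factor $1$ inside $\phi_p$. Indices where $a_k \in \{0, \infty\}$ (hence $v_k \in \{0, \infty\}$) cause no trouble once the natural conventions $0\cdot\infty = 0$ and $\phi_p(0)=0$ are adopted.
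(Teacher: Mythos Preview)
Your proof is correct and follows essentially the same route as the paper's: both combine the growth condition \eqref{eq:growth-condition}, Jensen's inequality for the convex function $\phi_p$, and a shift/reindexing to produce the factor $\theta^p/T$; the only difference is cosmetic order --- you apply the growth condition first and then Jensen, whereas the paper writes $a_{k+1}/a_k=\phi_p(\phi_p^{-1}(a_{k+1}/a_k))$, applies Jensen, and then uses the equivalent form $\phi_p^{-1}(s'/t')\,w^p(t')\geq \theta^p w^p(s')$ inside the argument. One small caution: your treatment of the case $W=+\infty$ by truncating to $|k|\leq N$ is slightly loose, since $\sum_{|k|\leq N} v_{k+1}\neq \sum_{|k|\leq N} v_k$ in general; the paper avoids this by redefining $a'_k=a_k\mathds{1}_{|k|\leq n}$ (so that $w(a'_k)=0$ outside the window) and summing over all of $\mathbb{Z}$, which makes the shift exact --- this fix transfers directly to your argument.
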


\begin{proof}
\noindent First, taking $s= \phi^{-1} (1/t')$ and $t= \phi^{-1} (1/s')$ the growth condition in \eqref{eq:growth-condition} becomes 
\begin{align}\label{eq:growth-equiv}
\phi^{-1}_p(\frac{s'}{t'}) w^p(t')\geq \theta^p w^p(s')\qquad\text{for all $s'\leq t'$}. 
\end{align} 
There is no loss of generality if we assume that the right-hand side of \eqref{eq:gene-convex} is finite and that, for $n\geq 1$ sufficiently large, $\lambda_n>0$ with 
$$\lambda_n= 
\sum_{|k|\leq n }w^p(a_k)T^{k}=\sum_{ k\in \mathbb{Z}}  w^p(a'_k)T^{k}$$ 
where here, $a'_k= a_k$ if $|k|\leq n$ and 0 if $|k|>n$. This makes sense as $w(0)=\phi_p(0)=0$. In virtue of the Jensen inequality and the estimate \eqref{eq:growth-equiv} we obtain the following 
\begin{align*}
 \sum_{k\in \mathbb{Z}} 2^{kp} \frac{d_{k+1}}{a_k} \Big( \frac{1}{\phi^{-1}(1/a_k)}\Big)^p&=
\sum_{k\in \mathbb{Z}} \frac{a_{k+1}}{a_k} w^p(a_k)T^{k} 
 \geq \lambda_n \sum_{k\in \mathbb{Z}} \phi_p\big( \phi_p^{-1}\big(\frac{a'_{k+1}}{a_k}\big)\big) \frac{1}{\lambda_n} w^p(a_k) T^{k}\\
&\geq \lambda_n \phi_p\Big( \frac{1}{\lambda_n}\sum_{k\in \mathbb{Z}} \phi_p^{-1}\big(\frac{a'_{k+1}}{a_k}\big) w^p(a_k) T^{k}\Big)\\
& \geq \lambda_n\phi_p\Big( \frac{\theta^p}{\lambda_n T} \sum_{k\in \mathbb{Z}} w^p(a'_{k}) T^{k} \Big)
= \phi_p\big(\frac{\theta^p }{T}\big) \sum_{|k|\leq n}  w^p(a_k) T^{k}.
\end{align*}
\noindent Letting $n\to \infty$ gives the sought inequality since $w(t) = \frac{1}{\phi^{-1} (1/t)}$ as in \eqref{eq:invert-cond}. 
\end{proof}

\vspace{1mm}

\noindent 	In connection with Lemma \ref{lem:gene-convex}, we take $\phi(t)= t^q$, $q\geq 1$ to obtain the following particular result. 
\begin{lemma} \label{lem:young-discrete} For a nonnegative sequence $(a_k)_{k\in \mathbb{Z}}$, $T>0$ and $q\geq 1$ we have 
\begin{align}
\sum_{k\in \mathbb{Z}} a_k^{1/q}T^k\leq T^{q}\sum_{k\in \mathbb{Z}} \frac{a_{k+1}}{a_k}a_k^{1/q}T^k.
\end{align}
\end{lemma}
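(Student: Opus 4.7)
The plan is to recognize this as a one-step consequence of Jensen's inequality once a natural rescaling is performed. Setting $c_k = a_k^{1/q}T^k$, a short algebraic manipulation gives
\[
T^q \cdot \frac{a_{k+1}}{a_k}\, a_k^{1/q}\, T^k \;=\; c_k \Big(\frac{c_{k+1}}{c_k}\Big)^q,
\]
so the claimed estimate is equivalent to the simpler looking $\sum_{k\in\mathbb{Z}} c_k \leq \sum_{k\in\mathbb{Z}} c_k(c_{k+1}/c_k)^q$.

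To prove this reduced form I would put $\lambda = \sum_{k\in\mathbb{Z}} c_k$, and assume without loss of generality (via the truncation device used in the proof of Lemma \ref{lem:gene-convex}) that $0<\lambda<\infty$ and that no $c_k$ vanishes. Applying Jensen's inequality to the convex map $x\mapsto x^q$ (using $q\geq 1$) with the probability weights $c_k/\lambda$, together with the shift-invariance $\sum_{k} c_{k+1} = \sum_{k} c_k = \lambda$, yields
\[
\sum_{k\in\mathbb{Z}} \frac{c_k}{\lambda}\Big(\frac{c_{k+1}}{c_k}\Big)^q \;\geq\; \Big(\sum_{k\in\mathbb{Z}} \frac{c_k}{\lambda}\cdot\frac{c_{k+1}}{c_k}\Big)^q \;=\; \Big(\frac{\lambda}{\lambda}\Big)^q \;=\; 1.
\]
Multiplying through by $\lambda$ delivers $\sum_k c_k(c_{k+1}/c_k)^q \geq \sum_k c_k$, which is the desired estimate after undoing the substitution.

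An equivalent viewpoint is to read Lemma \ref{lem:young-discrete} as the specialization of Lemma \ref{lem:gene-convex} to the parameters $\phi(t)=t^q$, $p=1$ and $\theta=1$: indeed $1/\phi^{-1}(1/a_k)=a_k^{1/q}$ and $\phi_1(1/T)=T^{-q}$, while the growth condition \eqref{eq:growth-condition} collapses to the equality $\phi_1(s/t)=\phi_1(s)/\phi_1(t)$, valid for \emph{all} positive $s,t$. This last point explains why the monotonicity hypothesis on $(a_k)$ present in Lemma \ref{lem:gene-convex} (needed there to apply the growth condition to the ratio $a_{k+1}'/a_k$) can be dropped in this special case. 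The only genuinely nontrivial ingredient in either route is the interplay between Jensen's inequality and the shift-invariance of the summation, and I do not foresee any further obstacle.
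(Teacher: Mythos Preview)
Your proof is correct and essentially identical to the paper's: the paper sets the measure $\d\mu(k)=a_k^{1/q}T^k$ (your $c_k$), uses the shift identity $\sum_k \d\mu(k+1)=\sum_k \d\mu(k)$, and applies Jensen's inequality to $x\mapsto x^q$ in the same way. Your additional remark that this is the specialization of Lemma~\ref{lem:gene-convex} with $\phi(t)=t^q$, $p=1$, $\theta=1$, and that the monotonicity hypothesis becomes superfluous because the growth condition is an equality for all $s,t$, is also exactly the observation the paper makes just before stating the lemma.
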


\begin{proof}
It suffices to assume that $0<\sum_{k\in \mathbb{Z}} a_k^{1/q}T^k<\infty$. Let the counting measure $\d\mu(k) =a_k^{1/q}T^k $ so that $\d\mu(k+1) =T\Big(\frac{a_{k+1}}{a_k}\Big)^{1/q}\d\mu(k) $. Jensen's inequality yields the sought inequality since 
\begin{align*}
1=	\Big(\frac{1}{\mu(\mathbb{Z}) }\int_{\mathbb{Z}}\d \mu(k)\Big )^q=\Big( \frac{1}{\mu(\mathbb{Z}) } \int_{\mathbb{Z}} T\Big(\frac{a_{k+1}}{a_k}\Big)^{1/q} \d \mu(k)\Big )^q &\leq \frac{1}{\mu(\mathbb{Z}) }\int_{\mathbb{Z}} T^q\frac{a_{k+1}}{a_k}\d \mu(k).
\end{align*}
\end{proof}

\noindent Consequently, taking $q= p^*_s/p\geq 1$ in Lemma \ref{lem:young-discrete} results with the following inequality; compare with \cite[Lemma 6.2]{Hitchhiker} or \cite[Lemma 5]{SV11}. 

\begin{corollary}
Assume $\frac{1}{p^*_s} = \frac1p- \frac {s}d>0$,  $s\in(0,1)$. 
For every $T>0$ and every nonnegative sequence $(a_k)_{k\in \mathbb{Z}}$ the following estimate holds
\begin{align*}
\sum_{k\in \mathbb{Z}} a_k^{(d-sp)/d}T^k\leq T^{d/(d-sp)}\sum_{k\in \mathbb{Z}} a_{k+1}a_k^{-sp/d}T^k.
\end{align*}
\end{corollary}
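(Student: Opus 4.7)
The corollary is flagged in the text as the special case of Lemma \ref{lem:young-discrete} obtained by taking $q = p_s^{*}/p$, so my plan is simply to justify that this choice of $q$ is admissible and then translate the resulting inequality into the claimed form.

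First I would check the admissibility $q \geq 1$. From the defining relation $\frac{1}{p_s^{*}} = \frac{1}{p} - \frac{s}{d}$ together with the standing hypothesis $p_s^{*} > 0$, one reads off $p_s^{*} = \frac{dp}{d-sp}$, so
\[
q \,=\, \frac{p_s^{*}}{p} \,=\, \frac{d}{d-sp}.
\]
Since $s \in (0,1)$ and $p \geq 1$ give $sp \geq 0$, and $p_s^{*} > 0$ forces $d - sp > 0$, this $q$ is at least $1$, so Lemma \ref{lem:young-discrete} applies.

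Next I would plug this $q$ into the inequality of Lemma \ref{lem:young-discrete}. The exponent $1/q$ on the left becomes $(d-sp)/d$, the external prefactor $T^{q}$ becomes $T^{d/(d-sp)}$, and inside the sum on the right the factor $(a_{k+1}/a_k)\,a_k^{1/q}$ simplifies as $a_{k+1}\,a_k^{1/q - 1} = a_{k+1}\,a_k^{-sp/d}$ since $1/q - 1 = -sp/d$. Substituting reproduces the corollary verbatim, so no genuine obstacle arises; the only task is the exponent bookkeeping just described.
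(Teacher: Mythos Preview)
Your proposal is correct and matches the paper's own derivation exactly: the paper gives no separate proof but simply states that the corollary follows by taking $q = p^*_s/p \geq 1$ in Lemma~\ref{lem:young-discrete}, and you have carried out precisely that substitution with the exponent bookkeeping made explicit.
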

\noindent The next lemma is an immediate consequence of the relation in \eqref{eq:luxemburg-norm} and provides an interplay between a Luxemburg norm associated with a function $\psi$ and that of the mapping $t\mapsto \psi(t^q)$. 
\begin{lemma}\label{lem:psiq-convex}
Let $\psi:[0,\infty]\to[0,\infty]$ be a Young function and $q>0$. Assume $\overline{\psi}_q(t)= \psi(t^q)$ is also a Young function then $u\in L^{\overline{\psi}_q}(\R^d)$ if and only if $u^q\in L^{\psi}(\R^d)$. 
Moreover, we have 
\begin{align*}
\|u\|_{L^{\overline{\psi}_q}(\R^d)}= 	\|u^q\|^{1/q}_{L^{\psi}(\R^d)}. 
\end{align*}
\end{lemma}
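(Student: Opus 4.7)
The plan is to reduce the statement to a direct manipulation of the Luxemburg functional via a change of the scaling variable, since both the claim and the norm on both sides are monotone functionals of $|u|$.

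First, I would unpack the Luxemburg norm on the left-hand side using its definition \eqref{eq:luxemburg-norm} applied to the Young function $\overline{\psi}_q$:
\begin{align*}
\|u\|_{L^{\overline{\psi}_q}(\R^d)}
=\inf\Big\{\lambda>0:\int_{\R^d}\overline{\psi}_q\Big(\tfrac{|u(x)|}{\lambda}\Big)\d x\leq 1\Big\}
=\inf\Big\{\lambda>0:\int_{\R^d}\psi\Big(\tfrac{|u(x)|^q}{\lambda^q}\Big)\d x\leq 1\Big\}.
\end{align*}
Here I have used the relation $\overline{\psi}_q(t/\lambda)=\psi((t/\lambda)^q)=\psi(t^q/\lambda^q)$, which is the whole point.

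The key step is the bijective substitution $\mu=\lambda^q$. Since $q>0$, the map $\lambda\mapsto\lambda^q$ is a continuous strictly increasing bijection of $(0,\infty)$ onto itself, so infima transport cleanly: the constraint set $\{\lambda>0:\int\psi(|u|^q/\lambda^q)\leq 1\}$ becomes, after this substitution, the set $\{\mu>0:\int\psi(|u|^q/\mu)\leq 1\}$, whose infimum is by definition $\|\,|u|^q\|_{L^{\psi}(\R^d)}$. Taking into account that $\lambda=\mu^{1/q}$ and that $\lambda\mapsto\lambda^{1/q}$ is strictly increasing so commutes with the infimum, I obtain
\begin{align*}
\|u\|_{L^{\overline{\psi}_q}(\R^d)}=\Big(\inf\Big\{\mu>0:\int_{\R^d}\psi\big(\tfrac{|u(x)|^q}{\mu}\big)\d x\leq 1\Big\}\Big)^{1/q}=\|\,|u|^q\|_{L^{\psi}(\R^d)}^{1/q}.
\end{align*}
This is the equality in the lemma, with the convention that $u^q$ means $|u|^q$ (consistent with the fact that Orlicz norms depend only on $|u|$).

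Finally, the membership statement is immediate from the identity just proved: $u\in L^{\overline{\psi}_q}(\R^d)$ is equivalent to $\|u\|_{L^{\overline{\psi}_q}(\R^d)}<\infty$, which by the identity is equivalent to $\|\,|u|^q\|_{L^{\psi}(\R^d)}<\infty$, i.e.\ to $|u|^q\in L^{\psi}(\R^d)$. There is essentially no obstacle; the only point to double-check is the hypothesis that $\overline{\psi}_q$ itself is a Young function, which is assumed and which ensures that the left-hand Luxemburg norm is well-defined and the set whose infimum is taken is nonempty (equivalently, $|u|^q$ being in $L^{\psi}$ is equivalent to $u$ being in $L^{\overline{\psi}_q}$ with no extra qualification).
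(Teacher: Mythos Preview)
Your proof is correct and follows exactly the route the paper intends: the paper does not even write out a proof, noting only that the lemma is ``an immediate consequence of the relation \eqref{eq:luxemburg-norm},'' and your argument is precisely that consequence made explicit via the substitution $\mu=\lambda^q$ in the Luxemburg infimum.
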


\noindent 
To prove Theorem \ref{thm:main-result}, we use the measure theoretic decomposition of functions by level sets. 
\begin{proof}[\textbf{Proof of Theorem \ref{thm:main-result}}]
Without loss of the generality assume that $u\geq 0$ and $|u|_{W^p_\nu(\R^d)}<\infty$. 	For each $k\in \mathbb{Z}$ define 
\begin{alignat*}{2}
A_k&= \big\{ u> 2^k \big\} \quad\,\, \text{and}\quad 
D_k= A_k\setminus A_{k+1}= \big\{2^k< u\leq 2^{k+1} \big\} ,\\
a_k&= \big|\{ u> 2^k \}\big|\quad \text{and}\quad d_k= \big|D_k\big|= a_k-a_{k+1} .
\end{alignat*}
Note that $A_{k+1}\subset A_k$ and hence $a_{k+1}\leq a_k$. 
Moreover, ${D_k}'s$ are disjoints, cover $\R^d$ and we get 
\begin{align}\label{eq:disjoint-union}
A^c_{k+1} = \bigcup_{\ell\leq k} D_\ell \quad \text{and}\quad A_{k} = \bigcup_{\ell\geq k} D_\ell. 
\end{align}
Accordingly we find that 
\begin{align}\label{eq:sum-ak-dk}
a_{k} = \sum_{\ell\geq k} d_\ell\quad \text{and} \quad d_{k} = a_k- \sum_{\ell\geq k+1} d_\ell. 
\end{align}
\noindent Given $x\in D_i$ and $y\in D_j$ with $j\leq i-2$, we have $|u(x)-u(y)|\geq 2^i-2^{j+1}\geq 2^{i-1}.$ Therefore, according to \eqref{eq:disjoint-union} and Lemma \ref{lem:inequality-indicator}, one deduces the following
\begin{align*}
\iil_{\R^d\R^d} |u(x)-u(y)|^p\nu(x-y)\d y\d x
&= \sum_{i\in \mathbb{Z}} \sum_{ j\in \mathbb{Z}}~\iil_{D_iD_j} |u(x)-u(y)|^p\nu(x-y)\d y\d x\\
&\geq 2\sum_{i\in \mathbb{Z}} \sum_{\mathop{ }^{~ j\in \mathbb{Z}}_{ i\leq j-2}}~\iil_{D_iD_j} 2^{p(i-1)}\nu(x-y)\d y\d x\\
&= 2\sum_{ i\in \mathbb{Z}} 2^{p(i-1)} \int_{D_i} \int_{A^c_{i-1}} \nu(x-y)\d y\d x\\
&\geq 2\kappa^2\sum_{i\in \mathbb{Z}} 2^{p(i-1)} d_i\frac{w^p(a_{i-1})}{a_{i-1}} . 
\end{align*}
In short, using the relation \eqref{eq:invert-cond} we have 
\begin{align}\label{eq:semi-norm1}
|u|^p_{W^p_\nu(\R^d) }& \geq
2\kappa^2\sum_{k\in \mathbb{Z}} 2^{pk} \frac{d_{k+1}}{a_k} \Big( \frac{1}{\phi^{-1}(1/a_k)}\Big)^p.
\end{align}
\noindent Recalling \eqref{eq:sum-ak-dk} and that $d_i=a_i-a_{i+1}$, we get
\begin{align}\label{eq:semi-norm2}
|u|^p_{W_\nu^p(\R^d)} & \geq 2\kappa^2\sum_{\mathop{ }^{~ i\in \mathbb{Z}}_{ a_{i-1} \neq 0}} 2^{p(i-1)} a_i\frac{w^p(a_{i-1})}{a_{i-1}} - 2\kappa^2 \sum_{\mathop{ }^{~ i\in \mathbb{Z}}_{ a_{i-1} \neq 0}} \sum_{\mathop{ }^{~ i\in \mathbb{Z}}_{ j \geq i+1}} 2^{p(i-1)} d_j\frac{w^p(a_{i-1})}{a_{i-1}}. 
\end{align}

\noindent Since  $d_j\leq a_j$,  using Fubini's theorem and the formula  $$\sum_{\mathop{ }^{~ i\in \mathbb{Z}}_{ i \leq j-1}} c^{i-1} = \frac{c^{j-1}}{c-1} \qquad\text{for $c>1$,}$$
we find that 
\begin{align*}
\sum_{\mathop{ }^{~ i\in \mathbb{Z}}_{ a_{i-1} \neq 0} } \sum_{\mathop{ }^{~ j\in \mathbb{Z}}_{  i+1\leq j}} 2^{p(i-1)} d_j\frac{w^p(a_{i-1})}{a_{i-1}}
&=\sum_{\mathop{ }^{~ j\in \mathbb{Z}}_{ a_{j-1} \neq 0}} \sum_{\mathop{ }^{~ i\in \mathbb{Z}}_{ i
\leq j-1}} 2^{p(i-1)} d_j\frac{w^p(a_{i-1})}{a_{i-1}}\\
&\leq  \frac{1}{2^p-1}\sum_{\mathop{ }^{~ j\in \mathbb{Z}}_{ a_{j-1} \neq 0}} 2^{p(j-1)} a_j\frac{w^p(a_{j-1})}{a_{j-1}} .
\end{align*}
Combining this together with \eqref{eq:semi-norm1} and \eqref{eq:semi-norm2}, recalling $C_p(t)= 2\kappa^2\frac{t^p-2}{t^p-1}$, $t>1$ yields 
\begin{align}\label{eq:semi-norm}
|u|^p_{W^p_\nu(\R^d)} & \geq C_p(2) \sum_{k\in \mathbb{Z}} 2^{pk} \frac{a_{k+1}}{a_k}w^p(a_k)=
C_p(2) \sum_{k\in \mathbb{Z}} 2^{pk} \frac{a_{k+1}}{a_k} \Big( \frac{1}{\phi^{-1}(1/a_k)}\Big)^p
\end{align}
\noindent In order to employ  \ref{item:assump-C}, we emphasize that  $d_{k+1}\leq a_{k+1}\leq a_k$.  In virtue of Lemma \ref{lem:gene-convex} with $T=2^p$ and the fact that $\phi^{-1} $ is nondecreasing, the following estimates hold true
\begin{align}\label{eq:young-ine}
\sum_{k\in \mathbb{Z}} 2^{pk} \frac{a_{k+1}}{a_k} \Big( \frac{1}{\phi^{-1}(1/a_k)}\Big)^p
\geq \phi(\frac{\theta}{2})\sum_{k\in \mathbb{Z}} 2^{pk} \Big( \frac{1}{\phi^{-1}(1/a_k)}\Big)^p\geq \phi(\frac{\theta}{2})\sum_{k\in \mathbb{Z}} 2^{pk} \Big( \frac{1}{\phi^{-1}(1/d_k)}\Big)^p.
\end{align}

\noindent Finally, since $u^p=\sum_{ k\in \mathbb{Z}} u^p\mathds{1}_{D_k}$, in view of Lemma \ref{lem:psiq-convex} and the relation \eqref{eq:luxem-indicator} we find that 
\begin{align}\label{eq:dyadic-orlicz}
\|u\|^p_{L^\phi(\R^d)} &= \|u^p\|_{L^{\phi_p}(\R^d)} 
\leq \sum_{ k\in \mathbb{Z}} \|u^p\mathds{1}_{D_k}\|_{L^{\phi_p}(\R^d)} 
\leq2^p \sum_{ k\in \mathbb{Z}} 2^{pk} \Big(\frac{1}{\phi^{-1}(1/d_k)}\Big)^p.
\end{align}
\noindent Merging together \eqref{eq:semi-norm}, \eqref{eq:young-ine} and \eqref{eq:dyadic-orlicz} gives the desired inequality for $t=2$ 
\begin{align*}
|u|^p_{W^p_\nu(\R^d) }\geq \frac{1}{\Theta_2^p} \|u\|^p_{L^\phi(\R^d)}, \quad \Theta_2^p=  2^{p-1} [\kappa^2C_p(2)\phi(\frac{\theta}{2})]^{-1} .
\end{align*}
\noindent More generally, using the level sets decomposition $A_k(t)= \{u>t^k\}$ and $D_k(t)= \{t^k<u\leq t^{k+1}\}$, for $t\geq 2$, in place of $A_k$ and $D_k$ and repeating with a close look at the proof of the previous case, t=2, reveals the desired estimate \eqref{eq:main-equality}.
It immediately follows that the embedding $W^{p}_\nu(\R^d) \hookrightarrow L^\phi(\R^d)$ is continuous while the continuity of the embedding $W^{1,p}(\R^d) \hookrightarrow L^\phi(\R^d)$ stems from the continuous embedding $W^{1,p}(\R^d) \hookrightarrow W_\nu^p(\R^d)$; see the relation \eqref{eq:equiv-sobolev}. This ends the proof of Theorem \ref{thm:main-result}. 
\end{proof}

\noindent
We recall that by Example \ref{Ex:fractional}, the fractional Gagliardo-Nirenberg-Sobolev inequality \eqref{eq:frac-sobolev-inequality} is a direct consequence of Theorem \ref{thm:main-result}, where, $\nu(h)= |h|^{-d-sp}$ is associated with the critical Young function $\phi(t) =\gamma_s^{p^*_s/p} t^{p^*_s}$, wherein, the growth factor is given by $\theta= 1/\gamma_s^{1/p}$. However, for the convenience of the reader, we provide a simple proof credited to Haim Brezis and  incorporated in \cite[Proposition 15.5]{Ponce16elliptic}.
\begin{theorem}[Fractional Sobolev inequality]\label{thm:gagliardo-sobolev}
Let $s \in (0, 1)$ such that $\frac{1}{p^*_s} = \frac1p- \frac {s}d>0$ then 
\begin{align*}
\|u\|_{L^{p^*_s}(\R^d)}\leq 2^{p^*_s/p} |B(0,1)|^{-1/p-s/d}\Big(\iil_{\R^d\R^d} \frac{|u(x)- u(y)|^p}{|x-y|^{d+sp}}\d x\d y\Big)^{1/p}\quad\text{for all} \quad u\in L^{p^*_s}(\R^d). 
\end{align*} 
\end{theorem}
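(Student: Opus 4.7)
The plan is to specialize the dyadic argument used in the proof of Theorem \ref{thm:main-result} to the explicit fractional kernel $\nu(h)=|h|^{-d-sp}$, for which Example \ref{Ex:fractional} tells us that the critical Young function is $\phi(t)=\gamma_s^{p^*_s/p}t^{p^*_s}$ with $\gamma_s=d c_d^{1+sp/d}/(sp)$, so $L^\phi(\R^d)$ coincides with $L^{p^*_s}(\R^d)$ up to an explicit constant. Since $|u|_{W^{s,p}}$ only decreases under $u\mapsto |u|$, we may assume $u\geq 0$. As in the main theorem, set the dyadic level sets $A_k=\{u>2^k\}$, $D_k=A_k\setminus A_{k+1}$, $a_k=|A_k|$, $d_k=a_k-a_{k+1}$.

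First I would replace the general Lemma \ref{lem:inequality-indicator} by its sharp explicit form \eqref{eq:frac-sing}, namely $\int_{E^c}|x-y|^{-d-sp}\d y\geq \gamma_s|E|^{-sp/d}$. Because $|u(x)-u(y)|\geq 2^{i-1}$ whenever $x\in D_i$, $y\in A_{i-1}^c=\bigcup_{j\leq i-2} D_j$, exchanging the roles of $x$ and $y$ yields
\begin{align*}
S:=\iil_{\R^d\R^d}\frac{|u(x)-u(y)|^p}{|x-y|^{d+sp}}\d y\d x\geq 2\gamma_s\sum_{i}2^{p(i-1)}d_i\, a_{i-1}^{-sp/d}.
\end{align*}
Next I would perform exactly the Abel-type rearrangement of \eqref{eq:semi-norm2}: writing $d_i=a_i-a_{i+1}$, applying Fubini to the double sum, and using the monotonicity $a_{i-1}^{-sp/d}\leq a_{j-1}^{-sp/d}$ for $i\leq j$ together with $d_j\leq a_j$ and the geometric series $\sum_{i\leq j-1}2^{p(i-1)}=\frac{2^{p(j-1)}}{2^p-1}$, I obtain
\begin{align*}
S\geq 2\gamma_s\,\frac{2^p-2}{2^p-1}\sum_{k\in \mathbb{Z}} 2^{pk}\,a_{k+1}\,a_k^{-sp/d}.
\end{align*}

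The next step is to apply Lemma \ref{lem:young-discrete} with $q=p^*_s/p\geq 1$ and $T=2^p$ to the sequence $(a_k)$, which gives
\begin{align*}
\sum_{k}a_k^{p/p^*_s}\,2^{pk}\leq 2^{p^*_s}\sum_{k}\frac{a_{k+1}}{a_k}\,a_k^{p/p^*_s}\,2^{pk}=2^{p^*_s}\sum_{k}a_{k+1}\,a_k^{-sp/d}\,2^{pk},
\end{align*}
where I used $p/p^*_s-1=-sp/d$. Combining with the previous estimate, I get a lower bound on $S$ by $C\sum_k a_k^{p/p^*_s}2^{pk}$. To turn this into $\|u\|_{L^{p^*_s}}^p$, I would use $a_k\geq d_k$ followed by the elementary subadditivity $(\sum x_j)^\alpha\leq \sum x_j^\alpha$ valid for $\alpha=p/p^*_s\leq 1$, yielding
\begin{align*}
\sum_{k}a_k^{p/p^*_s}2^{pk}\geq \sum_{k}(d_k2^{p^*_sk})^{p/p^*_s}\geq\Big(\sum_k d_k 2^{p^*_sk}\Big)^{p/p^*_s}.
\end{align*}
Finally, the layer-cake bound $\|u\|_{L^{p^*_s}}^{p^*_s}\leq 2^{p^*_s}\sum_k d_k2^{p^*_sk}$ closes the loop.

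The remaining work is arithmetic bookkeeping of the multiplicative constants through all these steps, using in particular $\gamma_s^{-1/p}=(sp/d)^{1/p}c_d^{-1/p-s/d}$; this is the main (and essentially only) obstacle, since the structural inequalities are already provided by the preceding lemmas. The exponents $2^{p^*_s/p}$ and $c_d^{-1/p-s/d}$ in the stated constant arise precisely from the layer-cake factor $2^{p^*_s}$ (raised to $1/p$) and from $\gamma_s^{-1/p}$ respectively, so the final statement reduces to carefully combining the factors $2^p,\,2^{p^*_s},\,(2^p-2)/(2^p-1)$ and $\gamma_s$ that accumulate in the chain of inequalities above.
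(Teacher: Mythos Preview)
Your approach is genuinely different from the paper's. The paper follows the Brezis--Ponce pointwise argument: for each $x$ one averages $|u(x)|\leq |u(y)|+|u(x)-u(y)|$ over $B(x,r)$, applies Jensen's inequality in both terms, then chooses $r=r(x)$ so that the two resulting summands are equal, raises to the power $p^*_s$ and integrates in $x$. This yields the stated constant $2^{p^*_s/p}c_d^{-1/p-s/d}$ in three lines. You instead specialize the dyadic level-set machinery of Theorem~\ref{thm:main-result}, which is structurally sound and would certainly give \emph{some} Gagliardo--Nirenberg--Sobolev inequality.

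The gap in your proposal is the final paragraph: the ``arithmetic bookkeeping'' does not produce the constant in the statement. Chasing your chain one obtains
\[
\|u\|_{L^{p^*_s}}^p \;\leq\; \frac{2^{p-1}\,2^{p^*_s}(2^p-1)}{\gamma_s\,(2^p-2)}\,|u|^p_{W^{s,p}}
\;=\;\frac{sp}{d}\cdot\frac{2^{p-1}(2^p-1)}{2^p-2}\cdot 2^{p^*_s}\,c_d^{-1-sp/d}\,|u|^p_{W^{s,p}},
\]
whereas the theorem asserts the factor $2^{p^*_s}c_d^{-1-sp/d}$. The extra factor $\frac{sp}{d}\cdot\frac{2^{p-1}(2^p-1)}{2^p-2}$ does not reduce to $1$ (and indeed degenerates as $s\to 0^+$ or grows with $p$), so your identification of ``$2^{p^*_s/p}$ from the layer-cake factor'' and ``$c_d^{-1/p-s/d}$ from $\gamma_s^{-1/p}$'' is incomplete: $\gamma_s^{-1/p}=(sp/d)^{1/p}c_d^{-1/p-s/d}$ already carries a spurious $(sp/d)^{1/p}$, and the dyadic losses $2^{p-1}(2^p-1)/(2^p-2)$ never cancel. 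In short, your route proves the embedding but not this particular constant; for the exact constant one needs the paper's pointwise optimization argument (or a comparably sharp method), not the dyadic decomposition.
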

\begin{proof} 
Fix $ x\in \R^d$ and $r>0$. Integrating the inequality $|u(x)|\leq |u(y)|+ |u(x)-u(y)|$ over $y\in B(x,r)$ and using Jensen's inequality implies 
\begin{align*}
|u(x)|&\leq \fint_{B(x,r)}|u(y)|\d y+ \fint_{B(x,r)}|u(x)-u(y)|\d y\\
&\leq \Big(\fint_{B(x,r)}|u(y)|^{p^*_s}\d y\Big)^{1/p^*_s}+ \Big(\fint_{B(x,r)}|u(x)-u(y)|^p\d y\Big)^{1/p}\\
&\leq \Big(\fint_{B(x,r)}|u(y)|^{p^*_s}\d y\Big)^{1/p^*_s}+ \Big(r^{d+sp}\fint_{B(x,r)}\frac{|u(x)-u(y)|^p}{|x-y|^{d+sp}}\d y\Big)^{1/p}\\
&\leq r^{-d/p^*_s}|B(0,1)|^{-1/p^*_s}\Big(\int_{\R^d}|u(y)|^{p^*_s}\d y\Big)^{1/p^*_s}+ r^s|B(0,1)|^{-1/p}\Big(\int_{\R^d}\frac{|u(x)-u(y)|^p}{|x-y|^{d+sp}}\d y\Big)^{1/p}.
\end{align*}
\noindent Now we choose $r$ such that both summands of the last inequality are equal. To more  be precise,
\begin{align*}
r(x)= r=|B(0,1)|^{1/d-p/dp^*_s}\Big(\int_{\R^d}|u(y)|^{p^*_s}\d y\Big)^{p/dp^*_s}\Big(\int_{\R^d}\frac{|u(x)-u(y)|^p}{|x-y|^{d+sp}}\d y\Big)^{-1/d}.
\end{align*} 
Substituting this specific $r(x)$ in the preceding estimate leads to 
\begin{align*}
|u(x)|^{p^*_s}
&\leq 2^{p^*_s} r^{-d}(x)|B(0,1)|^{-1}\Big(\int_{\R^d}|u(y)|^{p^*_s}\d y\Big)\\
&= 2^{p^*_s} |B(0,1)|^{-2+p/p^*_s}\Big(\int_{\R^d}|u(y)|^{p^*_s}\d y\Big)^{1-p/p^*_s}\Big(\il_{\R^d} \frac{|u(x)-u(y)|^p}{|x-y|^{d+sp}}\d y\Big). 
\end{align*}
\noindent This implies an equivalent of the sought inequality, as integrating with respect to $x$ yields, 
\begin{align*}
\int_{\R^d}|u(x)|^{p^*_s}\d x
&\leq 2^{p^*_s} |B(0,1)|^{-1-sp/d}\Big(\int_{\R^d}|u(y)|^{p^*_s}\d y\Big)^{1-p/p^*_s}\Big(\iil_{\R^d\R^d} \frac{|u(x)-u(y)|^p}{|x-y|^{d+sp}}\d y\d x\Big). 
\end{align*}
\end{proof}
\noindent The next result is a sort of converse to Example \ref{Ex:fractional}. It implies that given a radial kernel $\nu$, 
then $\phi(t) = ct^q$ if and only if $\nu$ is a fractional kernel of the form $C|h|^{-d-sp}$, $s\in (0,1).$ 
\begin{theorem}\label{thm:fractional} Let $\nu:\R^d\setminus\{0\}\to [0,\infty)$ be radial.  Let \ref{item:assump-A}, \ref{item:assump-B} and \ref{item:assump-C} be in force. Assume that $\nu$ is associated with the critical function $\phi(t)=c t^q$ for some $q,c>0$. Then necessarily $\frac{1}{p}-\frac{1}{d}<\frac{1}{q}<\frac{1}{p}$ and there exists $s\in (0,1)$, in fact, $s=\frac{d}{p}-\frac{d}{q}$, such that $\nu(h)= C_{p,q,d}|h|^{-d- sp}$ for some constant $C_{p,q,d}>0$ depending on $c,p,q$ and $d$.
\end{theorem}

\begin{proof}
First of all, in virtue of \ref{item:assump-C}, observe that $\phi_p(t)= ct^{q/p}$ with $c>0$ is convex if and only if $q\geq p\geq 1$. The relation \eqref{eq:invert-cond} implies that $w(r)=c^{1/q}r^{1/q}$ and hence \eqref{eq:defw1} amounts to
\begin{align*}
c^{p/q}r^{p/q-1}= \int_{B^c(0, \eta(r))} \nu(h)\d h
= dc_d\int_{\eta(r)}^\infty \nu(\tau)\tau^{d-1}\d \tau = \int_{r}^\infty \nu(\eta(\tau'))\d \tau'. 
\end{align*}
Differentiating both sides and letting $\rho= \eta(r)= \big(\frac{r}{c_d}\big)^{1/d}$ yields
\begin{align*}
\nu(\rho)= c^{p/q}(1-\frac{p}{q}) c_d^{p/q-2}\, \rho^{-d+ dp/q-d} =C_{p,q,d} \,\rho^{-d-sp}\quad\text{ with $s= \frac{d}{p}-\frac{d}{q}\in [0, d]$}. 
\end{align*}
In short, $\nu(h)= C_{p,q,d} \,|h|^{-d-sp}.$ Finally observe that, by \ref{item:assump-A}, $\nu\in L^1(\R^d, 1\land |h|^p)$ if and only if $s\in (0, 1)$ that is $\frac{1}{p}-\frac{1}{d}<\frac{1}{q}$ and $q>p$. This ends the proof. 
\end{proof}

\bigskip
\vspace{2mm}

\noindent\textbf{Symmetric rearrangement:} In view of generalizing Theorem \ref{thm:main-result} we need to bring into play the symmetric rearrangement. We want to weaken the assumptions on $\nu$, by possibly enlarging them to the class of non-radial kernels. Ultimately, we recall some essential notions of symmetric decreasing rearrangement; see for instance \cite{Bae19, Gra14,LL01} for more details. Let $E\subset \R^d$ be a measurable set with $|E|<\infty$. The symmetric rearrangement of $E$ denoted $E^*=B(0, r_E)$ is the open ball having the same volume with $E$, i.e., $r_E= \eta(|E|)= \big(\frac{|E|}{c_d}\big)^{1/d}$. The symmetric rearrangement of a measurable function $u:\R^d\to\R$, is the function denoted $u^*:\R^d\to [0,\infty)$ and defined by 
\begin{align}\label{eq:sym-rearran}
u^*(x)= u^*(|x|)= \int_0^\infty \mathds{1}_{\{ |u| >s\}^* }(x)\d s= \inf\{s>0: ~|\{ |u| >s\}|\leq c_d|x|^d\}. 
\end{align}

\noindent It is a routine to check that the identity in \eqref{eq:sym-rearran} holds true. Obviously the function $u^*$ is radial and radially nonincreasing, i.e., $u^*(x)\leq u^*(y)$ whenever $|x|\geq |y|$. Furthermore, $\{ |u|>s\}^*= \{ u^* >s\},$ for all $s>0.$ This implies that $u^*$ and $u$ are equimeasurables, i.e., $ |\{ |u| >s\}|= |\{ u^* >s\}|$ for all $s>0$ and that $u^*$ is lower semi-continuous. Next, assume $u^*(r)<\infty$, $r>0$ and let $s_n=u^*(r)-\frac{1}{n}$, for $n\geq 1$ large. The inf characterization in \eqref{eq:sym-rearran} implies $s_n\not\in \{s>0: |\{ |u| >s\}|\leq c_dr^d\} $. Thus, $|\{ |u| >u^*(r)-\frac{1}{n} \}|>c_dr^d$ and hence letting $n\to \infty$ we get 
\begin{align*}
|\{ |u| \geq u^*(r) \}|\geq c_dr^d= |B(0,r)|.
\end{align*}
\noindent Since $u^*$ is radially nonincreasing, $\{ u^* >u^*(r) \}\subset B(0, r)\subset \{ u^* \geq u^*(r) \}\subset \overline{B}(0,r)$ . Hence we get 
\begin{align*}%\label{eq:equime-ineq0}
|\{ |u| >u^*(r) \}|= |\{ u^* >u^*(r) \}|\leq |B(0, r)| = |\{ u^* \geq u^*(r) \}|.
\end{align*}
This combined with the previous inequality yields
\begin{align}\label{eq:equime-ineq}
|\{ |u| >u^*(r) \}|\leq |B(0, r)| \leq |\{ |u| \geq u^*(r) \}|.
\end{align}
\noindent The equalities hold if $u^*$ is decreasing. The next result, compare with \cite[Lemma 3.1]{JW19}, is a fair generalization of Lemma \ref{lem:inequality-indicator}. 
\begin{theorem}\label{thm:inequality-indicator}
Let $\nu: \R^d\setminus \{0\}\to [0,\infty)$ be measurable and note $\nu^*$ be its symmetric rearrangement.
Let $E\subset \R^d$ be measurable such that $|E|<\infty$, define $ r_E= \eta(|E|)= \big(\frac{|E|}{c_d}\big)^{1/d}$, then 
\begin{align*}
&\essinf_{x\in\R^d}\int_{E^c} \nu(x-y)\d y \geq \nu^\# (|E|), \quad\text{with} \quad \nu^\# (|E|)= \int_{ \{\nu <\nu^*(r_E)\} }\nu(h)\d h,\\
&\esssup_{x\in\R^d}\int_{E} \nu(x-y)\d y \leq \nu_\# (|E|), \quad\text{with} \quad \nu_\# (|E|)= \int_{ \{\nu \geq \nu^*(r_E)\} }\nu(h)\d h.
\end{align*}	
\noindent Moreover, $\nu^\# (|E|)\to \int_{\R^d} \nu(h)\d h$ 
as $|E|\to0$. If  $\nu \in L^1(\R^d\setminus B(0, \delta ))$ for some $\delta>0$ then $\nu^\# (|E|)\to 0$ 
as $|E|\to\infty$. 
\end{theorem}

\begin{proof}
We only prove the first inequality and the second follows analogously.	Fixing $x\in \R^d$, it is sufficient to assume that $\int_{E^c} \nu(x-y)\d y <\infty$. Let $E_x= x+E$ so that $|E_x|=|E|$ and hence  we get 
\begin{align*}
\int_{E^c} \nu(x-y)\d y &= \int_{E^c_x} \nu(h)\d h= \Big[\int_{\{\nu <\nu^*(r_E)\} } - \int_{E_x\cap \{\nu <\nu^*(r_E)\} }+ \int_{E^c_x\cap \{\nu \geq \nu^*(r_E)\}} \Big] \nu(h)\d h\\
&\geq \int_{\{\nu <\nu^*(r_E)\} } \nu(h)\d h - \nu^*(r_E) |E_x\cap \{\nu <\nu^*(r_E)\} |+ \nu^*(r_E) |E^c_x\cap \{\nu \geq \nu^*(r_E)\} |\\
&= \int_{\{\nu <\nu^*(r_E)\} } \nu(h)\d h + \nu^*(r_E) \big( |E^c_x\cap \{\nu \geq \nu^*(r_E)\} |- |E_x\cap \{\nu < \nu^*(r_E)\} |\big)\\
& \geq \int_{\{\nu <\nu^*(r_E)\} } \nu(h)\d h. 
\end{align*}

\noindent The last inequality follows since, as inequality \eqref{eq:equime-ineq} implies $|\{\nu \geq \nu^*(r_E)\}|\geq |E| $, we have 
\begin{align*}
|E^c_x\cap \{\nu \geq \nu^*(r_E)\} | &- |E_x\cap \{\nu < \nu^*(r_E)\} |\\
&= (|\{\nu \geq \nu^*(r_E)\}|- |E_x\cap \{\nu \geq \nu^*(r_E)\} |) - (|E|- |E_x\cap \{\nu \geq \nu^*(r_E)\} | ) \\
&= |\{\nu \geq \nu^*(r_E)\}|- |E|\geq |B(0,r_E)|-|E|=0. 
\end{align*}
Meanwhile, $\nu^* (0)= \inf\{s>0: ~|\{ \nu >s\}|=0\}=  \|\nu\|_{L^\infty(\R^d)}$, hence  we get $\nu^\# (|E|)\to \int_{\R^d} \nu(h)\d h$ as $|E|\to0$. If $\nu \in L^1(\R^d\setminus B(0, \delta ))$, then a convergence argument implies $\nu^\# (|E|)\to 0$ as $|E|\to\infty$. 
\end{proof}

\noindent We mention in passing that Lemma \ref{lem:inequality-indicator} and Theorem \ref{thm:inequality-indicator} generalize \cite[Lemma 6.1]{Hitchhiker} focusing on the particular kernel, $\nu(h)= |h|^{-d-sp}$ for $ s\in (0,1)$. Indeed, the latter case follows by observing that  $\nu=\nu^*$. More generally, there holds the  following.
\begin{corollary} Let $E\subset \R^d$. Let $\gamma'_\vartheta
= \frac{d}{\vartheta} c_d^{1+\frac{\vartheta}{d}}$, $\vartheta>0$.  For $\beta'>0 $, $0<\beta\leq  d$ we have
\begin{align*}%\label{eq:frac-sing}
\essinf_{x\in\R^d}	\int_{E^c} \frac{\d y}{|x-y|^{d+\beta'}} \geq \gamma'_{\beta'}|E|^{-\beta'/d}
\quad\text{and }
\quad \esssup_{x\in\R^d}\int_{E} \frac{\d y}{|x-y|^{d-\beta }} \leq \gamma'_\beta|E|^{\beta/d}. 
\end{align*}
\end{corollary}

\medskip 
\noindent We are now ready to state a refined version of Theorem \ref{thm:main-result} under weaker 
assumptions. For this, we need to redefine the potential $w$ in \eqref{eq:defw1} as follows  	
\begin{align}\label{eq:defw2}
w(r) =  (r\nu^\#(r))^{1/p}= \Big(|B(0,\eta(r))| \int_{\{\nu< \nu^*(\eta(r)) \}} \nu(h) \d h \Big)^{1/p}, \qquad \eta(r)= \big(\frac{r}{c_d}\big)^{1/d}. 
\end{align}
We emphasize that $\nu^*$ is the symmetric rearrangement of $\nu$. 
\begin{theorem}\label{thm:main-result2}
Let $\nu: \R^d\setminus \{0\}\to [0,\infty)$ be measurable. Let $w: [0,\infty)\to [0,\infty)$ with $w(r)= (r\nu^\#(r))^{1/p}$ be as in \eqref{eq:defw2}. Assume that: $r\mapsto w(r)$ is invertible,  $t\mapsto \phi_p(t)= \phi(t^{1/p})$ is a Young function where $\phi(t)= 1/w^{-1}(1/t)$ and that there is $\theta >0$ such that 
\begin{align*}
\phi\big(\theta \frac{s}{t}\big)	\leq \frac{\phi(s)}{\phi(t)} \qquad\text{for all $\,\,0\leq s\leq t$}.
\end{align*}
\noindent Define $\Theta_t=  t[2C_p(t)\phi(\frac{\theta}{t})]^{-1/p}$ with $C_p(t)=\frac{t^p-2}{t^p-1}$, $t\geq 2$. Then the following inequality holds
\begin{align*}
\|u\|_{L^\phi(\R^d)} \leq \Theta_t \Big(\iil_{\R^d\R^d} |u(x)-u(y)|^p\nu(x-y)\d y\d x\Big)^{1/p}\quad\text{for all}\quad u \in L^\phi(\R^d).
\end{align*}
\end{theorem} 

\begin{proof}
The proof follows exactly the lines of the proof of Theorem \ref{thm:main-result}, as the only major change is the analog of the estimate \eqref{eq:semi-norm1} which easily derives from Theorem \ref{thm:inequality-indicator}. 
\end{proof}

\vspace{1mm}
\begin{remark}
The assertions $(i)-(vii)$ of Proposition \ref{prop:growth-phi} remain true for a general kernel $\nu$ such that 
$ w(r)= (r\nu^{\#}(r))^{1/p}$ and $\phi(t) = 1/w^{-1}(1/t)$ exist. 
\end{remark}

\noindent We now present two observations for which, it is possible to eschew the lack of certain assumptions of Theorem \ref{thm:main-result2} and retrieve a similar conclusion. The first observation implies that lack of growth condition \eqref{eq:growth-condition} may sometime not be a direct obstacle. For instance we saw in Example \ref{Ex:fractional-max} our inequality remains true although $\phi: t\mapsto \max(t^{p^*_{s_1}}, t^{p^*_{s_2}})$, $p^*_{s_2}>p^*_{s_1}> p$ does not satisfy the growth condition \eqref{eq:growth-condition}, but this is compensated by the fact that each $t\mapsto t^{p^*_{s_i}}, i=1,2$ verifies the growth condition \eqref{eq:growth-condition}.  The second observation implies that one can compensate the lack of convexity. For instance, as we mentioned in  Example \ref{Ex:fractionalmin} our inequality remains true although $\phi: t\mapsto \min(t^{p^*_{s_1}}, t^{p^*_{s_2}})$, $p^*_{s_2}>p^*_{s_1}>p$ is not convex but the inequality \eqref{eq:main-equality} remains true. The lack of convexity is compensated by the minorant convex. This is the goal of the next result.
\begin{theorem}\label{thm:main-partial}
Let $\nu: \R^d\setminus \{0\}\to [0,\infty)$ be measurable and $\phi$ be the corresponding critical function. 
\noindent Assume there exist  $\phi_i$, $i=1,2$ such that $t\mapsto\phi_i(t^{1/p})$ is convex and satisfies the growth condition: $\phi_i(t)\phi_i\big(\theta_i\frac{s}{t}\big)\leq \phi_i(s)$ for all $s\leq t$. Consider the following two scenarios.

\begin{enumerate}[$(i)$]
\item We have $\phi(t)= \max(\phi_1(t), \phi_2(t)) $ and there exist $\nu_i: \R^d\setminus \{0\}\to [0,\infty)$, $i=1,2$ associated with $\phi_i$ such that $c^{-1}\nu(h)\leq \max(\nu_1(h), \nu_2(h))\leq c\nu(h)$ for some $c\geq 1$. 
\item  We have $\phi(t)= \min(\phi_1(t), \phi_2(t))$ which is identified with $\phi_{\min}$, 
\begin{align*}
\phi_{\min}(t)=\int_0^t\frac{\min(\phi_1(s), \phi_2(s))}{s}\d s. 
\end{align*}
\end{enumerate} 
Then in either scenario $(i)$ and $(ii)$ the following inequality remains valid
\begin{align*}
\|u\|_{L^\phi(\R^d)} \leq C\Big(\iil_{\R^d\R^d} |u(x)-u(y)|^p\nu(x-y)\d y\d x\Big)^{1/p}\quad\text{for all}\quad u \in L^\phi(\R^d). 
\end{align*}

\end{theorem}

\begin{proof}
$(i)$ Assume $\phi(t)= \max(\phi_1(t), \phi_2(t))$ so that $\|u\|_{ L^{\phi} (\R^d) }
\leq 2\max(\|u\|_{L^{\phi_1} (\R^d)},\|u\|_{L^{\phi_2} (\R^d)})$, by \eqref{eq:orlicz-inter-max}. Since Theorem \ref{thm:main-result2} holds  for each couple $(\phi_i,\nu_i),\,i=1,2$, the desired inequality follows. Indeed, for $u\in L^\phi(\R^d) = L^{\phi_1}(\R^d)\cap L^{\phi_2}(\R^d)$, 
\begin{align*}
\begin{split}
\|u\|_{L^{\phi_i} (\R^d)}&\leq C_i\Big(\iil_{\R^d\R^d} |u(x)-u(y)|^p\nu_i(x-y)\d y\d x\Big)^{1/p}\\
&\leq C\Big(\iil_{\R^d\R^d} |u(x)-u(y)|^p\nu(x-y)\d y\d x\Big)^{1/p}, \,\, C=\max(C_1, C_2)c_2^{1/p}.
\end{split}
\end{align*}

\noindent $(ii)$ Assume $\phi(t)= \min(\phi_1(t), \phi_2(t))$. In virtue of Proposition \ref{prop:growth-phi}  $t\mapsto \frac{\phi_i(t^{1/p})}{t}$, $i=1,2$ is increasing, it follows that  $t\mapsto \phi_{\min} (t^{1/p}) $ is convex since 
\begin{align*}
\phi_{\min}(t^{1/p})=\frac1p \int_0^t\frac{\min(\phi_1(s^{1/p}), \phi_2(s^{1/p}))}{s}\d s. 
\end{align*}

\noindent Moreover, putting $\theta'=\min(\theta_1, \theta_2)$ one easily checks that
\begin{align*}%\label{eq:phimin-growth}
& \phi\big(\theta'\frac{s}{t}\big)	\leq \frac{\phi(s)}{\phi(t)} \quad\text{for all $s\leq t$ \quad and }\quad \phi_{\min}(t)\leq \phi(t)\leq\phi_{\min}(2t)\quad\text{for all $t> 0$}. 
\end{align*}

\noindent Altogether, this implies that $\phi_{\min}$ satisfies the growth condition \eqref{eq:growth-condition} with $\theta=\frac{1}{2}\min(\theta_1, \theta_2)$. Indeed, %e have 
\begin{align*}
\phi_{\min}\big(\frac{\theta' s}{2t}\big)\leq \phi\big(\frac{\theta' s}{2t}\big) \leq \frac{\phi(s/2)}{\phi(t)}\leq \frac{\phi_{\min}(s)}{\phi_{\min}(t)} \quad\text{for $s\leq t$.} 
\end{align*} 
\noindent It turns out that Lemma \ref{lem:gene-convex} applies to $\phi_{\min}$ and hence, since $\phi^{-1}(1/r)\leq\phi^{-1}_{\min}(1/r)\leq 2\phi^{-1}(1/r)$, as a substitute for the inequality\eqref{eq:gene-convex} one readily obtains that
\begin{align}\tag{\ref{eq:gene-convex}'}\label{eq:gene-convex-bis}
\phi_{\min}(\frac{\theta}{T^{1/p}})\sum_{k\in \mathbb{Z}} \Big( \frac{1}{\phi^{-1}(1/d_k)}\Big)^p T^{k} \leq 2^p \sum_{k\in \mathbb{Z}} \frac{d_{k+1}}{a_k} \Big( \frac{1}{\phi^{-1}(1/a_k)}\Big)^p T^{k}.
\end{align}
\noindent Whence, a mere adaptation of the proof of Theorem \ref{thm:main-result} provides the desired	 inequality. 
\end{proof}
\noindent An immediate consequence of Theorem \ref{thm:main-result2} is given by the following embeddings. 
\begin{corollary} \label{cor:emb-sobolev} Assume the assumptions of Theorem \ref{thm:main-result2} are in force, with $\nu\in L^1(\R^d, 1\land |h|^p)$. Let $\psi $ be a Young function such that $\psi (ct)\leq \max(t^p, \phi(t))$ for all $t>0$ and for some constant $c>0$. The embeddings $W_\nu^p(\R^d) \hookrightarrow L^\psi(\R^d)$ and $W^{1,p}(\R^d) \hookrightarrow L^\psi(\R^d)$ are continuous.
\end{corollary}

\begin{proof}
Clearly, Theorem \ref{thm:main-result2} implies $W_\nu^p(\R^d) \hookrightarrow L^\phi(\R^d)$ and we naturally have $W_\nu^p(\R^d) \hookrightarrow L^p(\R^d)$.
Hence $W_\nu^p(\R^d) \hookrightarrow L^p(\R^d)\cap L^\phi(\R^d)= L^{\max(t^p, \phi(t)}(\R^d)$, by Theorem \ref{thm:orlicz-emb}. 
Therefore, since $\psi (ct)\leq \max(t^p, \phi(t)),$ Theorem \ref{thm:orlicz-emb} implies that $W_\nu^p(\R^d) \hookrightarrow L^\psi(\R^d)$ and hence $W^{1,p}(\R^d) \hookrightarrow W_\nu^p(\R^d) \hookrightarrow L^\psi(\R^d)$ see the relation \eqref{eq:equiv-sobolev}.
\end{proof}

\noindent Since $q\in [p, p^*_s]$ if and only if $t^q\leq \max(t^p, t^{p^*_s})$ we deduce the following. 
\begin{corollary}\label{cor:emb-sobolev-frac1} If $p^*_s>0,$ $s\in (0, 1)$,  then for every $ q\in [p, p^*_s]$ the embedding $W^{s,p}(\R^d) \hookrightarrow L^{q}(\R^d)$ is continuous. 
\end{corollary}

\noindent More generally, in order to capture the above embeddings in Corollary \ref{cor:emb-sobolev} on arbitrarily open sets, we need to introduce extension domain with respect to the kernel $\nu$.
\begin{definition} An open set $\Omega\subset \mathbb{R}^d$ will be called an $W^p_\nu$-extension domain if there exist a linear operator $E:W^p_\nu(\Omega)\to W^p_\nu (\mathbb{R}^d)$ and a constant $C: = C(\Omega, \nu, d,p)$ 
such that 
\begin{align*}
Eu\mid_{\Omega} &= u \qquad\hbox{and} \qquad \|Eu\|_{W^{p}_\nu(\mathbb{R}^d)}\leq C \|u\|_{W^{p}_\nu(\Omega)}, \quad\text{for all}\quad u \in W^{p}_\nu(\Omega). 
\end{align*}
\end{definition} 

\noindent According to \cite{Zh15}, an open set $\Omega\subset \mathbb{R}^d$ is an $W^{s,p}$-extension domain, $s\in (0,1)$ if and only if $\Omega$ satisfies the measure density condition, i.e., there is $c>0$ such that for every $x\in \Omega$ and $r>0$ we have $|\Omega\cap B(x,r)|>cr^d$. For some authors this condition also means that $\Omega$ is a $d$-set. Note that every Lipschitz domain is an $W^{s,p}$-extension domain even for $s=1$; see \cite{HKT08}. 

\vspace{1mm}
\noindent  If $\nu$ is radially almost decreasing then a bounded bi-Lipschitz domain $\Omega\subset \mathbb{R}^d$ is an $W^p_\nu$-extension domain; see \cite[Theorem 3.78]{Fog20}. For instance, in this context, $\Omega=\R^d$, the half space $\Omega=\R^d_+$ and Balls $\Omega= B(a,r)$, $r>0, a\in \R^d$ are  $W^p_\nu$-extension domains.

\begin{corollary} \label{cor:emb-sobolev-bis} Let the assumptions of Theorem \ref{thm:main-result2} be in force. Assume $\Omega\subset \mathbb{R}^d$ is an $W^p_\nu$-extension domain. Let $\psi $ be a Young function such that $\psi (ct)\leq \max(t^p, \phi(t))$ for all $t>0$ and for some constant $c>0$. 
Then the embedding $W_\nu^p(\Omega) \hookrightarrow L^\psi(\Omega)$ is continuous. Moreover, if $|\Omega|<\infty$ then $W_\nu^p(\Omega) \hookrightarrow L^\psi(\Omega)$ is continuous when 
$\psi (ct)\leq \max(t, \phi(t))$ for all $t\geq 1$. 
\end{corollary}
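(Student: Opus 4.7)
The plan is to reduce everything to the global case handled by Corollary \ref{cor:emb-sobolev} through the extension operator, and then use Theorem \ref{thm:orlicz-emb} to accommodate the relaxed comparison hypothesis in the bounded case.

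First, I would pick $u \in W_\nu^p(\Omega)$ and set $\widetilde{u} = Eu \in W_\nu^p(\R^d)$ where $E$ is the extension operator provided by the assumption that $\Omega$ is a $W_\nu^p$-extension domain. By Corollary \ref{cor:emb-sobolev}, applied with the Young function $\psi$ satisfying $\psi(ct) \leq \max(t^p, \phi(t))$ for all $t>0$, we obtain the continuous embedding $W_\nu^p(\R^d) \hookrightarrow L^\psi(\R^d)$, hence
\begin{align*}
\|u\|_{L^\psi(\Omega)} \leq \|\widetilde{u}\|_{L^\psi(\R^d)} \leq C_1 \|\widetilde{u}\|_{W_\nu^p(\R^d)} \leq C_1 C_E \|u\|_{W_\nu^p(\Omega)},
\end{align*}
where $C_E$ is the extension constant. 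The inequality $\|u\|_{L^\psi(\Omega)} \leq \|\widetilde{u}\|_{L^\psi(\R^d)}$ follows directly from the definition of the Luxemburg norm applied to $\widetilde{u}\mathds{1}_\Omega = u$. This proves the first embedding.

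For the second assertion, assume $|\Omega|<\infty$ and that $\psi(ct)\leq \max(t,\phi(t))$ only for $t\geq 1$. The strategy is to dominate $\psi$ on $\Omega$ by a Young function which already appears in the first part. Consider the Young function $\Phi(t) = \max(t^p, \phi(t))$: by Corollary \ref{cor:emb-sobolev}, we already have $W_\nu^p(\Omega) \hookrightarrow L^\Phi(\Omega)$ (via extension). I would then verify that there exist constants $c', t_0>0$ such that $\psi(c't) \leq \Phi(c t)$ for $t \geq t_0$; indeed for $t\geq 1$, $\psi(ct) \leq \max(t,\phi(t)) \leq \max(t^p, \phi(t)) = \Phi(t)$, so the comparison holds on $[1,\infty)$. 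Since $|\Omega|<\infty$, Theorem \ref{thm:orlicz-emb} then yields the continuous embedding $L^\Phi(\Omega) \hookrightarrow L^\psi(\Omega)$, and composing gives $W_\nu^p(\Omega) \hookrightarrow L^\psi(\Omega)$.

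For the special fractional case $\nu(h) = |h|^{-d-sp}$, by Example \ref{Ex:fractional} the critical function is $\phi(t) = \gamma_s^{p_s^*/p} t^{p_s^*}$. For $q\in[p,p_s^*]$, I would check directly that $t^q \leq \max(t^p, t^{p_s^*}) = t^p$ for $t\leq 1$ (since $q\geq p$) and $t^q \leq t^{p_s^*}$ for $t\geq 1$ (since $q\leq p_s^*$); taking $\psi(t) = c t^q$ for a suitable constant absorbs the factor $\gamma_s^{p_s^*/p}$ and the first statement applies. For bounded $\Omega$ and $q\in[1,p_s^*]$, the comparison $t^q \leq t^{p_s^*}$ for $t\geq 1$ suffices, so the relaxed version applies. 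I do not anticipate any real obstacle; this corollary is essentially a packaging result combining the extension property with Corollary \ref{cor:emb-sobolev} and the finite-measure criterion of Theorem \ref{thm:orlicz-emb}, so the only mild subtlety is to state the comparison hypothesis in a form compatible with those two results.
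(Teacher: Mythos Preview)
Your proposal is correct and follows essentially the same route as the paper: both arguments combine the extension property with Theorem \ref{thm:main-result2} (packaged as Corollary \ref{cor:emb-sobolev}) to land in $L^{\max(t^p,\phi(t))}(\Omega)$, and then invoke Theorem \ref{thm:orlicz-emb} to reach $L^\psi(\Omega)$. The only cosmetic difference is that for the bounded case the paper passes through $L^1(\Omega)\cap L^\phi(\Omega)=L^{\max(t,\phi(t))}(\Omega)$ via $L^p(\Omega)\hookrightarrow L^1(\Omega)$, whereas you stay with $\Phi(t)=\max(t^p,\phi(t))$ and use $\max(t,\phi(t))\leq\max(t^p,\phi(t))$ for $t\geq 1$ before applying the finite-measure version of Theorem \ref{thm:orlicz-emb}; both are equally valid.
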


\begin{proof}
\noindent Clearly, Theorem \ref{thm:main-result2} and the extension property of $\Omega$ imply $W_\nu^p(\Omega) \hookrightarrow L^\phi(\Omega)$ and we naturally have $W_\nu^p(\Omega) \hookrightarrow L^p(\Omega)$. Hence 
$W_\nu^p(\Omega) \hookrightarrow L^p(\Omega)\cap L^\phi(\Omega)= L^{\max(t^p, \phi(t))}(\Omega)$,
where the equality is a consequence of the equivalence \eqref{eq:orlicz-inter-max}. 
This, combined with the fact that $\psi (ct)\leq \max(t^p, \phi(t))$ and Theorem  \ref{thm:orlicz-emb} imply  that $W_\nu^p(\Omega) \hookrightarrow L^\psi(\Omega)$. 

\vspace{1mm}

\noindent Analogously, if $|\Omega|<\infty$ then $L^p(\Omega)\hookrightarrow L^1(\Omega)$ so that we have $W_\nu^p(\Omega) \hookrightarrow L^1(\Omega)\cap L^\phi(\Omega)= L^{\max(t,\phi(t))}(\Omega)$ and since $\psi (ct)\leq \max(t^p, \phi(t))$, $t\geq1$ by Theorem \ref{thm:orlicz-emb} we get $W_\nu^p(\Omega) \hookrightarrow L^{\psi}(\Omega)$. 
\end{proof}

\vspace{1mm}

\noindent Note that $\psi (ct)\leq \max(t, \phi(t))$ for all $t\geq 1$ implies that $\psi (ct)\leq \max(t^p, \phi(t))$ for all $t\geq 1$. Specializing Corollary  \ref{cor:emb-sobolev-bis} to the particular case $\nu(h)= |h|^{-d-sp}$ yields the following well-known embedding result. 
\begin{corollary}\label{cor:emb-sobolev-frac}  Assume $\Omega\subset \mathbb{R}^d$ is an $W^{s,p}$-extension domain, $s\in (0,1)$.  If $p^*_s>0$ then 
$W^{s,p}(\Omega) \hookrightarrow L^{q}(\Omega)$ is continuous for every $q\in [p, p^*_s]$. Moreover, $|\Omega|<\infty$ then 	$W^{s,p}(\Omega) \hookrightarrow L^{q}(\Omega)$ is continuous for every $q\in [1, p^*_s]$. 
\end{corollary}
\begin{proof}
It suffices to observe that $q\in [p, p^*_s]$ if and only if $t^q\leq \max(t^p, t^{p^*_s})$ for all $t>0$.  The case $|\Omega|<\infty$ follows analogously since  $ q\in [1, p^*_s]$ implies $t^q\leq \max(t^p, t^{p^*_s})$ for all $t\geq1$.
\end{proof}

\noindent A natural question arises concerning the compactness of the embeddings. We begin with the following important remark. 
\begin{remark}
It is not reasonable to expect that the embedding $W^p_\nu(\Omega)\hookrightarrow L^{\phi}(\Omega)$ is compact. This assertion aligns with the well-known fact that in the classical case where $\nu(h)=|h|^{-d-sp}$, the embedding $W^{s,p}(\Omega)\hookrightarrow L^{p_s^*}(\Omega)$ is never compact even for sufficiently smooth $\Omega$. However, the next result devises the compact embedding into Orlicz space $L^{\psi}(\Omega)$. To this end, we point  out that  some conditions on $\nu$ and $\Omega$ under which the embedding $W^p_\nu(\Omega)\hookrightarrow L^{p}(\Omega)$  is compact are referenced for  instance \cite[Section 3.7]{Fog20} and \cite{Fog23s,FK22,DMT23}.
\end{remark}
\begin{theorem}[Compact embedding]\label{thm:compact-emb}
Let the assumptions of Theorem \ref{thm:main-result2} be in force. Let $\Omega\subset\R^d$ be  open and bounded. Assume  the embedding $W^p_\nu(\Omega)\hookrightarrow L^{p}(\Omega)$  is compact. 
Let $\psi $ be a Young function such that, for all $t\geq 1$, $\psi(ct) \leq \max(t, \phi(t))$ $($in particular $\psi(ct) \leq \max(t^p, \phi(t)))$ for some $c>0$. Then the embedding $W^p_\nu(\Omega)\hookrightarrow L^{\psi}(\Omega)$ is compact provided that $\psi$ grows essentially more slowly than $\phi$ at infinity, that is, 
\begin{align*}
\lim_{t\to\infty}\frac{\psi(t)}{\phi(t)}=0. 
\end{align*} 
In particular if $\Omega$ is bounded Lipschitz and $p^*_s>0$,  then  the embedding $W^{s,p}(\Omega)\hookrightarrow L^{q}(\Omega)$ with  $q\in [1, p^*_s)$, is compact. 
\end{theorem}
\begin{proof}
The  compactness of the embedding $W^{s,p}(\Omega)\hookrightarrow L^{q}(\Omega)$ with $q\in [1, p^*_s)$, is a consequence of the first statement since in this case $\phi(t)= \gamma_s^{p^*_s/p}t^{p^*_s}$ and $t^q/t^{p^*_s} \xrightarrow{t\to \infty}0$. To  prove the first statement, let $(u_n)_n\subset \WnuOm$ be bounded sequence in $\WnuOm.$ By Corollary \ref{cor:emb-sobolev-bis}, the Sobolev embedding $W^p_\nu(\Omega)\hookrightarrow L^{\psi}(\Omega)$ is continuous. Moreover, taking $\psi=\phi$, one finds that $(u_n)_n$ is also bounded $L^{\phi}(\Omega)$. By assumption, we can assume without lost of generality that there is $u\in \WnuOm$ such that $\|u_n-u\|_{L^p(\Omega)}\xrightarrow{n\to\infty}0$. For  $\eps>0$, consider $t_\eps>0$ such that $\psi(t)\leq\eps \phi(t)$ for all $t>t_\eps$. Moreover, we know from Proposition \ref{prop:growth-phi} that $t\mapsto \frac{\phi(t)}{t}=  t^{p-1}\frac{\phi(t)}{t^p}$ is increasing. 
Therefore, for all $t\in (0, t_\eps]$ we find that 
\begin{align*}
\frac{\psi(t)}{t} \leq \frac{1}{c}\max\big( 1, \frac{\phi(t/c)}{(t/c)}\big)\leq  \frac{1}{c}\max\big( 1, \frac{\phi(t_\eps/c)}{(t_\eps/c)}\big):= C(\eps). 
\end{align*}
\noindent Whence,  we have just proved that for every $\eps>0$ there is $C(\eps)>0$ such that
\begin{align*}
\psi(t)\leq \eps \phi(t)+ C(\eps)t \qquad\text{for all $t>0$}. 
\end{align*}
From this one readily deduces that, for every $\eps>0$ there is $C(\eps)>0$ 
\begin{align*}
\|u_n-u\|_{L^{\psi}(\Omega)} &\leq \eps \|u_n-u\|_{L^{\phi}(\Omega)} +  C(\eps)\|u_n-u\|_{L^{1}(\Omega)}\leq \eps M 
+  C(\eps) |\Omega|^{1-\frac{1}{p}}\|u_n-u\|_{L^{p}(\Omega)}
\end{align*}
where we put $M= \|u\|_{L^{\phi}(\Omega)}+ \sup_{n\geq1}\|u_n\|_{L^{\phi}(\Omega)}$.  Since $\|u_n-u\|_{L^p(\Omega)}\xrightarrow{n\to\infty}0$, we get 
\begin{align*}
\limsup_{n\to \infty}	\|u_n-u\|_{L^{\psi}(\Omega)} &\leq \eps M. 
\end{align*}
Whence since $\eps>0$ is arbitrarily chosen,  we deduce that   $\|u_n-u\|_{L^{\psi}(\Omega)}\xrightarrow{n\to\infty}0$. 
\end{proof}

%%%%%%%%%%%%%%%%%%%%%%%%%%%%%%%%%%%%%%%%%%%%%%%%
\section{Poincar\'e-Sobolev inequality}\label{sec:poincare-sobolev}
%%%%%%%%%%%%%%%%%%%%%%%%%%%%%%%%%%%%%%%%%%%%%%
In this section, we wish to establish Gagliardo-Nirenberg-Sobolev inequality for functions restricted on a domain $\Omega\subset \R^d$. First and foremost, observe that the inequality
\begin{align*}%\label{eq:poincre-friedrichs}
\|u\|_{L^\phi(\Omega)} \leq C\Big(\iil_{\Omega\Omega} |u(x)-u(y)|^p\nu(x-y)\d y\d x\Big)^{1/p}\quad\text{for all } \quad u \in L^\phi(\Omega), 
\end{align*}
cannot hold for an arbitrary bounded $\Omega\subset \R^d$. In fact, if u is a nonzero constant, then the right-hand side is zero but the left-hand side is not. Accordingly, we need to rule out constant functions in this context. For instance, if we replace the integrand on the left-hand side by $u -\mbox{$\fint_\Omega u$}$ then it fully makes sense to think of an inequality of the form 
\begin{align*}%\label{eq:poincre-friedrichs}
\big\|u-\mbox{$\fint_\Omega u$}\big \|_{L^\phi(\Omega)} \leq C\Big(\iil_{\Omega\Omega} |u(x)-u(y)|^p\nu(x-y)\d y\d x\Big)^{1/p}\quad\text{for all } \quad u \in L^\phi(\Omega),
\end{align*}
\noindent where $\fint_\Omega u= \frac{1}{|\Omega|}\int_\Omega u(x)\d x $ denotes the mean value of $u$ over $\Omega$. This particular type of inequality is customarily well known as a Sobolev-Poincar\'e type inequality and turns out to have a strong reciprocity with the Poincare type inequalities. To be more precise, see Theorem \ref{thm:poincare-sobolev-equiv}, the validity of Sobolev-Poincar\'e type inequality implies that of Poincar\'{e} type inequality and vice versa. Let us recall some Poincar\'{e} type inequalities of interest here. Another consequence of Theorem \ref{thm:inequality-indicator} is given by the Poincar\'{e}-Friedrichs type inequality. 

\begin{theorem}[Poincar\'e type inequalities]\label{thm:poincare}
Let $\Omega \subset \R^d$ be measurable with $|\Omega|<\infty$ and let $\nu:\R^d\setminus \{0\}\to[0,\infty)$ be measurable with full support. 

\vspace{2mm}

\noindent \textbf{Poincar\'e-Friedrichs inequality:} Let $L^p_\Omega (\R^d)= \{u\in L^p(\R^d):~u=0, ~\text{a.e on } \Omega^c \}$ and let $\nu^\#$ be defined as in Theorem \ref{thm:inequality-indicator}. Letting $C=[2\nu^\#(|\Omega|)]^{-1/p}$, there holds that 
\begin{align*}%\label{eq:poincre-friedrichs}
\|u\|_{L^p(\Omega)} \leq C\Big(\iil_{\R^d\R^d} |u(x)-u(y)|^p\nu(x-y)\d y\d x\Big)^{1/p}\quad\text{for all } \quad u \in L^p_\Omega(\R^d) .
\end{align*}
\noindent \textbf{Poincar\'e inequality:} Assume $\nu$ is radially almost decreasing, i.e. $\kappa \nu(|x|)\leq \nu(|y|)$ if $|x|\geq |y|$, then letting $C=[\kappa |\Omega| \nu(R)]^{-1/p} $ where $R= \operatorname{diam}(\Omega)$ is the diameter of $\Omega$, we have 
\begin{align*}%\label{eq:poincre-inequality}
\big\|u-\mbox{$\fint_{\Omega} u$}\big\|_{L^p(\Omega)} \leq C\Big(\iil_{\Omega\Omega} |u(x)-u(y)|^p\nu(x-y)\d y\d x\Big)^{1/p}\quad\text{for all $u \in L^p(\Omega)$}. 
\end{align*}
\end{theorem}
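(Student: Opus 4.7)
The plan is to prove the two inequalities separately, with Theorem \ref{thm:inequality-indicator} as the essential ingredient for the first part and a Jensen-plus-lower-bound-of-$\nu$ argument for the second.

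For the Poincar\'e--Friedrichs inequality, I would exploit the fact that $u$ vanishes on $\Omega^c$ to reduce the double integral over $\R^d\times\R^d$ to an interaction between $\Omega$ and $\Omega^c$. Concretely, by symmetry of $\nu$ and since $u(y)=0$ when $y\in\Omega^c$,
\begin{align*}
\iint_{\R^d\R^d} |u(x)-u(y)|^p\nu(x-y)\d y\d x
\geq 2\int_\Omega |u(x)|^p \Big(\int_{\Omega^c} \nu(x-y)\d y\Big)\d x.
\end{align*}
Applying Theorem \ref{thm:inequality-indicator} with $E=\Omega$ yields $\int_{\Omega^c}\nu(x-y)\d y\geq \nu^{\#}(|\Omega|)$ uniformly in $x$, and substituting gives $2\nu^{\#}(|\Omega|)\|u\|_{L^p(\Omega)}^p$ on the right, from which raising to the $1/p$ power produces the desired constant $C=[2\nu^{\#}(|\Omega|)]^{-1/p}$.

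For the Poincar\'e inequality I would proceed by the classical averaging trick. Writing $u(x)-\fint_\Omega u = \fint_\Omega (u(x)-u(y))\d y$ and applying Jensen's inequality in the $y$-variable gives
\begin{align*}
\big\|u-\mbox{$\fint_{\Omega} u$}\big\|_{L^p(\Omega)}^p \leq \frac{1}{|\Omega|}\iint_{\Omega\Omega} |u(x)-u(y)|^p\d y\d x.
\end{align*}
To bring in $\nu$, observe that whenever $x,y\in\Omega$ one has $|x-y|\leq R=\operatorname{diam}(\Omega)$, so the almost-decreasing assumption on the radial kernel applied with $|x-y|\leq R$ yields $\kappa\,\nu(R)\leq \nu(x-y)$. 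Dividing by $\kappa\nu(R)$ inside the integral and rearranging gives exactly the claimed constant $C=[\kappa|\Omega|\nu(R)]^{-1/p}$.

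No step appears to pose a serious obstacle; the only subtlety is to make sure the orientation of the almost-decreasing condition \eqref{eq:almost-decreas} is applied correctly (one needs the \emph{lower} bound on $\nu(x-y)$ for \emph{small} $|x-y|$, which is what \eqref{eq:almost-decreas} gives when $|y|=R\geq |x-y|$). It is also worth noting that the hypothesis $|\Omega|<\infty$ together with $\nu$ having full support ensures $\nu^{\#}(|\Omega|)>0$ in the first part, and in the second part the finite diameter $R$ together with full support of $\nu$ ensures $\nu(R)>0$, so both constants $C$ are finite and the inequalities are non-trivial.
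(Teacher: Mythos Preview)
Your proposal is correct and follows essentially the same approach as the paper: the Poincar\'e--Friedrichs part uses the vanishing of $u$ on $\Omega^c$ together with Theorem \ref{thm:inequality-indicator} applied to $E=\Omega$, and the Poincar\'e part combines the lower bound $\nu(x-y)\geq \kappa\nu(R)$ for $x,y\in\Omega$ with Jensen's inequality on the mean oscillation. The only cosmetic difference is the order of the two steps in the second part (you first apply Jensen and then insert $\nu$, whereas the paper first pulls out $\kappa\nu(R)$ from the seminorm and then applies Jensen), which is immaterial.
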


\begin{proof}
If $u=0$ a.e. on $\Omega^c$, Theorem \ref{thm:inequality-indicator} yields the Poincar\'{e}-Friedrichs inequality as follows 
\begin{align*}
|u|^p_{W_\nu^p(\R^d)}
&\geq 2 \int_{\Omega} |u(x)|^p\d x\int_{\Omega^c} \nu(x-y)\d y\geq 2\nu^\#(|\Omega|)\|u\|^p_{L^p(\Omega)}.
\end{align*}
\noindent Next let $R= \operatorname{diam}(\Omega)$ and assume $\nu$ is almost decreasing, so that we get $\nu(x-y)\geq \kappa \nu(R)$ for all $x,y\in \Omega$. Set $C^p=\kappa |\Omega| \nu(R)$. For $u\in L^p(\Omega)$, Jensen's inequality yields, 
\begin{align*}
\iil_{\Omega\Omega} |u(x)-u(y)|^p\nu(x-y)\d y\d x&\geq C^p\int_\Omega \fint_\Omega |u(x)-u(y)|^p\d y\d x \geq C^p\big\|u-\mbox{$\fint_{\Omega} u$}\big\|^p_{L^p(\Omega)}. 
\end{align*}
\end{proof}

\noindent More recent and improved versions of Poincar\'{e} type inequalities including the situation where $\nu$ is not fully supported and/or  $\Omega$ is bounded only in one direction have been established in \cite{Fog23s}. In particular, one is able to obtain the Poincar\'{e} inequality even  if $\nu$ is not almost decreasing, by requiring that  the embedding $W_\nu^p(\Omega) \hookrightarrow L^p(\Omega)$ is compact; see  \cite{Fog20,Fog23s}. The next corollary follows from Theorem \ref{thm:main-result2}.

\begin{corollary}[Poincar\'e-Friedrichs-Sobolev type inequality]
\noindent Let $\Omega\subset \R^d$ be open and define $L^\phi_\Omega (\R^d)= \{u\in L^\phi(\R^d):~u=0, ~\text{a.e on } \Omega^c \}$. Under the assumptions of Theorem \ref{thm:main-result2} we get
\begin{align*}
\|u\|_{L^\phi(\Omega)} \leq \Theta_t\Big(\iil_{\R^d\R^d} |u(x)-u(y)|^p\nu(x-y)\d y\d x\Big)^{1/p}\quad\text{for all $u \in L^\phi_\Omega(\R^d)$}.
\end{align*}
\end{corollary}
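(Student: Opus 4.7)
The plan is to deduce this immediately from Theorem \ref{thm:main-result2} by extending $u$ by zero outside $\Omega$ and observing that both sides of the inequality are insensitive to this extension. The key point is that the class $L^\phi_\Omega(\R^d)$ identifies each $u$ with a function defined on all of $\R^d$ that vanishes a.e. on $\Omega^c$, so Theorem \ref{thm:main-result2} applies verbatim.

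Concretely, I would proceed as follows. First, for any $u \in L^\phi_\Omega(\R^d)$ and any $\lambda>0$, using $\phi(0)=0$, I split
\begin{equation*}
\int_{\R^d} \phi\!\Big(\frac{|u(x)|}{\lambda}\Big)\d x = \int_{\Omega} \phi\!\Big(\frac{|u(x)|}{\lambda}\Big)\d x + \int_{\Omega^c} \phi(0)\d x = \int_{\Omega} \phi\!\Big(\frac{|u(x)|}{\lambda}\Big)\d x.
\end{equation*}
Taking the infimum in $\lambda$ on both sides, this yields the identity $\|u\|_{L^\phi(\Omega)} = \|u\|_{L^\phi(\R^d)}$ in the Luxemburg-norm definition \eqref{eq:luxemburg-norm}.

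Next, since $u\in L^\phi_\Omega(\R^d) \subset L^\phi(\R^d)$, Theorem \ref{thm:main-result2} applies directly to $u$, giving
\begin{equation*}
\|u\|_{L^\phi(\R^d)} \leq \Theta_t\Big(\iil_{\R^d\R^d} |u(x)-u(y)|^p\nu(x-y)\d y\d x\Big)^{1/p}.
\end{equation*}
Combining these two displayed inequalities delivers the claimed bound. There is no real obstacle here; the only thing worth double-checking is that the constant $\Theta_t$ is the same as in Theorem \ref{thm:main-result2}, which it is by construction since we have not altered the ambient $\R^d$ or the kernel $\nu$. Note also that one does not need to restrict the double integral on the right-hand side to $\Omega\times\Omega$; in fact keeping it over $\R^d\times\R^d$ is exactly what makes the inequality a Friedrichs-type (rather than a Poincaré-type) statement, which is consistent with the role of the zero boundary condition encoded in $L^\phi_\Omega(\R^d)$.
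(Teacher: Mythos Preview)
Your argument is correct and matches the paper's approach: the paper states this corollary as an immediate consequence of Theorem \ref{thm:main-result2} without further proof, and your two observations (that $\|u\|_{L^\phi(\Omega)}=\|u\|_{L^\phi(\R^d)}$ for $u\in L^\phi_\Omega(\R^d)$ since $\phi(0)=0$, and that Theorem \ref{thm:main-result2} applies verbatim to such $u$) are exactly what makes it immediate.
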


\noindent The next result is somewhat a side consequence of the convexity assumption on $\phi_p$ and shows the equivalence between the Sobolev inequality and the Poincar\'{e}-Sobolev inequality. 

\begin{theorem}[Poincar\'e-Sobolev inequalities]\label{thm:poincare-sobolev-equiv} Assume assumptions of Theorem \ref{thm:main-result2} hold. Let $\Omega \subset \R^d$ be measurable such that $0<|\Omega|<\infty$. If the Poincar\'{e}-Sobolev inequality holds true $\Omega$, i.e., 
\begin{align*}
\big\|u-\mbox{$\fint_{\Omega} u$}\big\|_{L^\phi(\Omega)} \leq C \Big(\iil_{\Omega\Omega} |u(x)-u(y)|^p\nu(x-y)\d y\d x\Big)^{1/p}\quad\text{for all $u \in L^\phi(\Omega)$},
\end{align*}
then the Poincar\'{e} inequality also holds true, i.e., 
\begin{align*}
\big\|u-\mbox{$\fint_{\Omega} u$}\big\|_{L^p(\Omega)} \leq C \Big(\iil_{\Omega\Omega} |u(x)-u(y)|^p\nu(x-y)\d y\d x\Big)^{1/p}\quad\text{for all $u \in L^p(\Omega)$}. 
\end{align*}
The converse holds true if in addition, $\Omega$ is an $W^p_\nu$-extension domain. 
\end{theorem}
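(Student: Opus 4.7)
The plan is to handle the two implications separately: the forward direction will exploit the Orlicz embedding $L^\phi(\Omega)\hookrightarrow L^p(\Omega)$ that is available because $|\Omega|<\infty$, while the converse will be derived by combining Poincar\'e on $\Omega$ with the extension operator and the global Sobolev inequality of Theorem~\ref{thm:main-result2}.

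For Poincar\'e--Sobolev $\Rightarrow$ Poincar\'e, I would first record that Proposition~\ref{prop:growth-phi}(iv) together with the convexity inequality $\phi(ct)\geq c\phi(t)$ (valid for $c\geq 1$ since $\phi(0)=0$) gives $t^p\leq \phi(ct)$ for $t\geq t_0$, so Theorem~\ref{thm:orlicz-emb} supplies a constant $K>0$ such that $\|w\|_{L^p(\Omega)}\leq K\|w\|_{L^\phi(\Omega)}$ for every $w\in L^\phi(\Omega)$. Since the assumed Poincar\'e--Sobolev inequality quantifies only $u\in L^\phi(\Omega)$ but the target statement ranges over $u\in L^p(\Omega)$, I would then approximate a general $u\in L^p(\Omega)$ with finite right-hand side by truncations $u_n=\max(-n,\min(u,n))\in L^\infty(\Omega)\subset L^\phi(\Omega)$, apply Poincar\'e--Sobolev to $u_n$, use the embedding to pass to the $L^p$-norm, and send $n\to\infty$ via dominated convergence (which handles $\mbox{$\fint_\Omega u_n$}\to\mbox{$\fint_\Omega u$}$) while invoking the $1$-Lipschitz property of truncation to secure $|u_n|_{W_\nu^p(\Omega)}\leq |u|_{W_\nu^p(\Omega)}$.

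For the converse, with $\Omega$ a $W_\nu^p$-extension domain, I would fix $u\in L^\phi(\Omega)$, set $v:=u-\mbox{$\fint_\Omega u$}$ so that $|v|_{W_\nu^p(\Omega)}=|u|_{W_\nu^p(\Omega)}$, and combine the Poincar\'e hypothesis $\|v\|_{L^p(\Omega)}\leq C_P|v|_{W_\nu^p(\Omega)}$ with the identity $\|v\|^p_{W_\nu^p(\Omega)}=\|v\|^p_{L^p(\Omega)}+|v|^p_{W_\nu^p(\Omega)}$ to obtain $\|v\|_{W_\nu^p(\Omega)}\leq (C_P^p+1)^{1/p}|v|_{W_\nu^p(\Omega)}$. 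Extending $v$ to $\R^d$ via the bounded operator $E\colon W_\nu^p(\Omega)\to W_\nu^p(\R^d)$ of norm $C_{\operatorname{ext}}$, and applying Theorem~\ref{thm:main-result2} to $Ev$, the restriction identity $Ev\big|_\Omega=v$ yields
\begin{align*}
\|v\|_{L^\phi(\Omega)}\leq \|Ev\|_{L^\phi(\R^d)}\leq \Theta_t\,|Ev|_{W_\nu^p(\R^d)}\leq \Theta_t\,\|Ev\|_{W_\nu^p(\R^d)}\leq \Theta_tC_{\operatorname{ext}}\|v\|_{W_\nu^p(\Omega)},
\end{align*}
and chaining the two bounds produces the Poincar\'e--Sobolev inequality with constant $\Theta_tC_{\operatorname{ext}}(C_P^p+1)^{1/p}$.

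I expect the only genuine subtlety to lie in the forward direction: the Poincar\'e--Sobolev inequality is a priori asserted only on $L^\phi(\Omega)$, yet the conclusion must hold on $L^p(\Omega)\supsetneq L^\phi(\Omega)$. The truncation step closes this gap cleanly because $\max(-n,\min(\cdot,n))$ is a contraction on $\R$ (so the double integral decreases), produces functions in $L^\infty(\Omega)\subset L^\phi(\Omega)$, and interacts well with the mean-value functional through dominated convergence. The reverse implication is essentially formal once Theorem~\ref{thm:main-result2} and the definition of an $W_\nu^p$-extension domain are available.
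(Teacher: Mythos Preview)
Your proposal is correct and follows essentially the same route as the paper: the forward direction rests on the embedding $L^\phi(\Omega)\hookrightarrow L^p(\Omega)$ obtained from Proposition~\ref{prop:growth-phi} and Theorem~\ref{thm:orlicz-emb}, and the converse combines the Poincar\'e hypothesis with the extension operator and Theorem~\ref{thm:main-result2} applied to $Ev$.

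The one place where you go further than the paper is worth pointing out. In the forward implication the paper simply writes the chain
\[
\big\|u-\mbox{$\fint_{\Omega} u$}\big\|_{L^p(\Omega)} \leq C\big\|u-\mbox{$\fint_{\Omega} u$}\big\|_{L^\phi(\Omega)} \leq C\,|u|_{W^p_\nu(\Omega)}
\]
without commenting on the fact that the hypothesis is quantified over $u\in L^\phi(\Omega)$ while the conclusion must hold for every $u\in L^p(\Omega)\supsetneq L^\phi(\Omega)$. Your truncation argument (apply the Poincar\'e--Sobolev inequality to $u_n=\max(-n,\min(u,n))\in L^\infty(\Omega)\subset L^\phi(\Omega)$, use the $1$-Lipschitz property to control the seminorm, and pass to the limit via Fatou and dominated convergence for the mean) genuinely closes this gap. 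This is a legitimate refinement of the paper's argument rather than a different strategy; the paper implicitly relies on exactly this kind of density/truncation step but does not spell it out.
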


\begin{proof}
By Proposition \ref{prop:growth-phi} we get $\phi(t)\geq \delta_0 t^p$ for all $t\geq t_0$ with fixed $t_0>0$. Thus, the embedding $L^\phi(\Omega) \hookrightarrow L^p(\Omega)$ is continuous, by Theorem \ref{thm:orlicz-emb}. Together with the Poincar\'{e} inequality yields  
\begin{align*}
\big\|u-\mbox{$\fint_{\Omega} u$}\big\|_{L^p(\Omega)} \leq C\big\|u-\mbox{$\fint_{\Omega} u$}\big\|_{L^\phi(\Omega)} \leq C\Big(\iil_{\Omega\Omega} |u(x)-u(y)|^p\nu(x-y)\d y\d x\Big)^{1/p}.
\end{align*}

\noindent Conversely assume, the Poincar\'{e} inequality holds and $\Omega$ is an $W^p_\nu$-extension domain. Let $\overline{u} \in W^p_\nu(\R^d)$ be an extension of $u_0= u-\fint_{\Omega} u$ with $u\in W^p_\nu(\Omega)$. Applying Theorem \ref{thm:main-result2} reveals that $\overline{u} \in L^\phi(\R^d)$ and we deduce the Poincar\'{e}-Sobolev inequality as follows
\begin{align*}
\big\|u-\mbox{$\fint_{\Omega} u$}\big\|_{L^\phi(\Omega)} 
\leq \|\overline{u} \|_{L^\phi(\R^d)} &\leq C\Big(\iil_{\R^d\R^d } |\overline{u} (x)-\overline{u} (y)|^p\nu(x-y)\d y\d x\Big)^{1/p}\\
&\leq C\Big(\int_{\Omega} |u(x)-\mbox{$\fint_{\Omega} u$}|^p\d x+\iil_{\Omega\Omega} |u_0(x)-u_0(y)|^p\nu(x-y)\d y\d x \Big)^{1/p}\\
&\leq C\Big(\iil_{\Omega\Omega} |u(x)-u(y)|^p\nu(x-y)\d y\d x \Big)^{1/p}.
\end{align*}
\end{proof}

\noindent As a direct consequence of Theorem \ref{thm:main-result2} and Theorem \ref{thm:poincare-sobolev-equiv} combined with Theorem \ref{thm:poincare} we get.

\begin{corollary} Let the assumptions of Theorem \ref{thm:main-result2} be in force. If $\nu:\R^d\setminus\{0\} \to [0,\infty)$ is almost decreasing and $\Omega\subset \R^d$ is an $W^p_\nu$-extension domain then Poincar\'{e}-Sobolev inequality holds, that is, there is a constant $C= C(\Omega, \nu, p, d)>0$ only depends on $\Omega, \nu, d$ and $p$ such that 
\begin{align}\label{eq:sobolev-poincare}
\qquad\big\|u-\mbox{$\fint_{\Omega} u$}\big\|_{L^\phi(\Omega)} \leq C \Big(\iil_{\Omega\Omega} |u(x)-u(y)|^p\nu(x-y)\d y\d x\Big)^{1/p}\quad\text{for all }\, \, u \in L^\phi(\Omega).
\end{align}
\noindent In particular, if $s\in (0,1)$ and $p^*_s>0$ then 
\begin{align*}
\big\|u-\mbox{$\fint_{\Omega} u$}\big\|_{L^{p^*_s}(\Omega)} \leq C\Big( \iil_{\Omega\Omega} \frac{|u(x)-u(y)|^p}{|x-y|^{d+sp}} \d y\d x\Big)^{1/p}\quad\text{for all }\, \, u\in L^{p^*_s}(\Omega).
\end{align*}
\end{corollary}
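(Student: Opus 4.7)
The plan is to realize the stated Poincar\'e--Sobolev inequality as a direct corollary of the two preceding results, combined with the Poincar\'e-type inequality established in Theorem \ref{thm:poincare}. Nothing new has to be proved from scratch; the work consists in verifying that the hypotheses of the intermediate results are met and then assembling them.

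First I would invoke Theorem \ref{thm:poincare} on $\Omega$: since $\nu$ is almost decreasing and $\Omega$ is bounded (as an $W^p_\nu$-extension domain of the type we have in mind; in any event $|\Omega|<\infty$ is forced by the diameter bound appearing in the constant), Theorem \ref{thm:poincare} yields the classical Poincar\'e inequality
\begin{align*}
\big\|u-\mbox{$\fint_{\Omega} u$}\big\|_{L^p(\Omega)}\le C\Big(\iil_{\Omega\Omega}|u(x)-u(y)|^p\nu(x-y)\d y\d x\Big)^{1/p}, \qquad u\in L^p(\Omega),
\end{align*}
with $C=[\kappa|\Omega|\nu(\operatorname{diam}(\Omega))]^{-1/p}$. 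Next, the extension hypothesis on $\Omega$ together with the Sobolev embedding $W^p_\nu(\R^d)\hookrightarrow L^\phi(\R^d)$ provided by Theorem \ref{thm:main-result2} places us precisely in the setting of the second half of Theorem \ref{thm:poincare-sobolev-equiv}: starting from a Poincar\'e inequality, extending $u-\fint_\Omega u$ to a function $\overline u\in W^p_\nu(\R^d)$, and applying the global Sobolev inequality to $\overline u$ delivers
\begin{align*}
\big\|u-\mbox{$\fint_{\Omega} u$}\big\|_{L^\phi(\Omega)}\le C\Big(\iil_{\Omega\Omega}|u(x)-u(y)|^p\nu(x-y)\d y\d x\Big)^{1/p},
\end{align*}
with a constant depending only on $\Omega, \nu, d, p$. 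This is exactly the desired Poincar\'e--Sobolev inequality.

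For the final \emph{in particular} clause, I would specialize to $\nu(h)=|h|^{-d-sp}$ with $sp<d$. This kernel is radially decreasing (hence almost decreasing with $\kappa=1$), lies in $L^1(\R^d,1\wedge|h|^p\d h)$, and by Example \ref{Ex:fractional} satisfies Assumptions A, B, C with critical Young function $\phi(t)=\gamma_s^{p^*_s/p}\,t^{p^*_s}$; the associated Orlicz space is therefore $L^{p^*_s}(\Omega)$ with an equivalent norm. Moreover, bounded bi-Lipschitz domains (indeed any $d$-set, by the extension theorem recalled after the definition of extension domains) are $W^{s,p}$-extension domains. Applying the general statement then gives the fractional Poincar\'e--Sobolev inequality.

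The only step that is not entirely mechanical is checking that Theorem \ref{thm:poincare-sobolev-equiv} actually applies, because it is formulated for $u\in W^p_\nu(\Omega)$ and requires the extension $\overline u$ to satisfy the bound $\|\overline u\|_{W^p_\nu(\R^d)}\le C\|u_0\|_{W^p_\nu(\Omega)}$ with $u_0=u-\fint_\Omega u$; here one uses linearity of the extension operator and the fact that the seminorm $|\cdot|_{W^p_\nu(\Omega)}$ is translation-invariant by constants so that $|u_0|_{W^p_\nu(\Omega)}=|u|_{W^p_\nu(\Omega)}$, while the Poincar\'e inequality controls $\|u_0\|_{L^p(\Omega)}$ by the same seminorm. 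This is the only point where care is needed; everything else is a direct citation.
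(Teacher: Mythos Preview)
Your proposal is correct and follows exactly the route the paper intends: the corollary is stated there without proof, simply as ``a direct consequence of Theorem \ref{thm:main-result2} and Theorem \ref{thm:poincare-sobolev-equiv} combined with Theorem \ref{thm:poincare}'', and you have unpacked precisely this chain (Poincar\'e inequality from the almost-decreasing assumption, then the converse implication in Theorem \ref{thm:poincare-sobolev-equiv} via extension and the global Sobolev inequality). Your closing remark about controlling $\|u_0\|_{W^p_\nu(\Omega)}$ through the Poincar\'e inequality is exactly the mechanism used in the proof of Theorem \ref{thm:poincare-sobolev-equiv} itself.
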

	
\begin{remark}
Let $\psi $ be a Young function such that, for all $t\geq 1$, $\psi(ct) \leq \max(t, \phi(t))$ for some $c>0$. Assume that the Sobolev-Poincar\'e inequality \eqref{eq:sobolev-poincare} holds then  there is $C'>0$ such that 
\begin{align*}%\label{eq:sobolev-poincare}
\qquad\big\|u-\mbox{$\fint_{\Omega} u$}\big\|_{L^\psi(\Omega)} \leq C \Big(\iil_{\Omega\Omega} |u(x)-u(y)|^p\nu(x-y)\d y\d x\Big)^{1/p}\quad\text{for all }\, \, u \in L^\psi(\Omega).
\end{align*}
\end{remark}

\begin{remark}
\noindent By a straightforward scaling argument one finds 
$C= C(d,p, s)$ such that, 
\begin{align}\label{eq:fractional-unif}
\big\|u-\mbox{$\fint_{B_r} u$}\big\|_{L^{p^*_s}(B_r)} \leq C\Big( \iil_{B_rB_r} \frac{|u(x)-u(y)|^p}{|x-y|^{d+sp}} \d y\d x\Big)^{1/p}\quad\text{for all $u\in L^{p^*_s}(B_r)$},
\end{align}
\noindent where $B_r= B(0, r)$, $r>0$ is any ball and $C>0$ in \eqref{eq:fractional-unif} is independent on $r>0$. Letting $r\to \infty$, one verifies  that the uniform estimate in \eqref{eq:fractional-unif} implies the Gagliardo-Nirenberg-Sobolev inequality given in Theorem \ref{thm:gagliardo-sobolev}. More generally the next result infers that the uniform Poincar\'{e}-Sobolev inequality on balls with respect to $\phi$ implies the nonlocal Gagliardo-Nirenberg-Sobolev inequality. 
%\cite{RGMP16}
\end{remark} 
\begin{theorem}\label{thm:poincare-sobolev}
Let $\nu \in L^1(\R^d, 1\land |h|^p)$ be nonnegative and let $\phi(t)= 1/w^{-1}(1/t)$ be defined as in Theorem \ref{thm:main-result2}, with $w(r)= (r\nu^{\#}(r))^{1/p}$. Assume there is a universal constant $\Theta>0$ such that for all balls $B\subset \R^d$, the following Poincar\'{e}-Sobolev inequality holds true,
\begin{align*}
\big\|u-\mbox{$\fint_{B} u$}\big\|_{L^\phi(B)} \leq \Theta \Big(\iil_{BB} |u(x)-u(y)|^p\nu(x-y)\d y\d x\Big)^{1/p}\quad\text{for all $u \in L^\phi(B)$}.
\end{align*}
\noindent Then following inequality holds true as well
\begin{align*}
\|u\|_{L^\phi(\R^d)} \leq \Theta \Big(\iil_{\R^d\R^d} |u(x)-u(y)|^p\nu(x-y)\d y\d x\Big)^{1/p}\quad\text{for all $u \in L^\phi(\R^d)$}.
\end{align*}
\end{theorem}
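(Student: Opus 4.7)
The plan is to apply the hypothesized uniform Poincaré--Sobolev inequality on the family of expanding balls $B_R = B(0,R)$ and let $R\to\infty$. Fix $u \in L^\phi(\R^d)$ and set $c_R = \fint_{B_R} u$. The hypothesis immediately gives
\begin{align*}
\|u - c_R\|_{L^\phi(B_R)} \leq \Theta \Bigl(\iint_{B_R\times B_R} |u(x)-u(y)|^p \nu(x-y)\, \d y\, \d x\Bigr)^{1/p}.
\end{align*}
By the monotone convergence theorem, the right-hand side converges to $\Theta \bigl(\iint_{\R^d\times\R^d}|u(x)-u(y)|^p\nu(x-y)\,\d y\,\d x\bigr)^{1/p}$ as $R\to\infty$. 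The task therefore reduces to showing that the left-hand side controls $\|u\|_{L^\phi(\R^d)}$ in the limit.

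First I would show $c_R\to 0$. Applying the Orlicz--Hölder inequality together with the formula \eqref{eq:luxem-indicator} for the Luxemburg norm of $\mathds{1}_{B_R}$ yields
\begin{align*}
|c_R| \leq \fint_{B_R} |u|\, \d x \leq \frac{2\,\|u\|_{L^\phi(\R^d)}}{|B_R|\,\widetilde{\phi}^{-1}(1/|B_R|)}.
\end{align*}
Invoking the classical Young-type estimate $s \leq \phi^{-1}(s)\,\widetilde{\phi}^{-1}(s)$, which holds for any Young function $\phi$ and its convex conjugate $\widetilde{\phi}$, this simplifies to $|c_R| \leq 2\,\|u\|_{L^\phi(\R^d)}\,\phi^{-1}(1/|B_R|)$. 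Since $\phi$ is an invertible Young function with $\phi^{-1}(0)=0$ (in fact an $N$-function whenever $1<p<\infty$ by Proposition \ref{prop:growth-phi}), we conclude $c_R \to 0$ as $R\to\infty$.

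Next I would pass to the limit on the left-hand side. Set $f_R(x) = (u(x)-c_R)\mathds{1}_{B_R}(x)$, so that $\|u-c_R\|_{L^\phi(B_R)} = \|f_R\|_{L^\phi(\R^d)}$ and, by the pointwise convergences $c_R\to 0$ and $\mathds{1}_{B_R}(x)\to 1$, one has $f_R(x)\to u(x)$ for every $x\in\R^d$. Given $\lambda > \liminf_R \|f_R\|_{L^\phi(\R^d)}$, there is a subsequence along which $\int_{\R^d} \phi(|f_R|/\lambda)\,\d x \leq 1$; Fatou's lemma combined with the continuity of $\phi$ then yields
\begin{align*}
\int_{\R^d} \phi\!\left(\tfrac{|u(x)|}{\lambda}\right) \d x \leq \liminf_{R\to\infty} \int_{\R^d} \phi\!\left(\tfrac{|f_R(x)|}{\lambda}\right) \d x \leq 1,
\end{align*}
so $\|u\|_{L^\phi(\R^d)} \leq \lambda$. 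Letting $\lambda$ decrease to $\liminf_R \|f_R\|_{L^\phi(\R^d)}$ delivers the lower-semicontinuity statement $\|u\|_{L^\phi(\R^d)} \leq \liminf_R \|f_R\|_{L^\phi(\R^d)}$, which, combined with the uniform Poincaré--Sobolev estimate and the convergence of the right-hand side, completes the proof.

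The main technical obstacle I anticipate is the decay $c_R\to 0$: the Luxemburg norm does not directly encode any $L^1$-tail behavior, so the $L^\phi$-control of $u$ must be converted into quantitative decay of $\fint_{B_R} u$, and this is exactly where the Orlicz--Hölder inequality paired with the lower Young estimate $s \leq \phi^{-1}(s)\widetilde{\phi}^{-1}(s)$ comes in. Once $c_R\to 0$ is established, the Fatou step is standard lower semicontinuity of the Luxemburg norm under pointwise convergence; and the monotone convergence of the right-hand side is automatic because the integrand is nonnegative and integrated over nested sets exhausting $\R^d\times\R^d$.
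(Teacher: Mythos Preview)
Your proof is correct and follows the same global strategy as the paper: apply the uniform Poincar\'e--Sobolev inequality on balls $B_R=B(0,R)$ and send $R\to\infty$. The difference lies in how the mean value term is disposed of. The paper uses the triangle inequality $\|u\|_{L^\phi(B_R)}\leq \|u-c_R\|_{L^\phi(B_R)}+\|\mathds{1}_{B_R}\|_{L^\phi}\,|c_R|$, bounds $|c_R|\leq |B_R|^{-1/p}\|u\|_{L^p(\R^d)}$ by H\"older, and then observes that the resulting coefficient is exactly $(\nu^{\#}(|B_R|))^{1/p}$ by the very definition of $w$ and $\phi$; the decay then comes from Theorem \ref{thm:inequality-indicator}. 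Your route instead shows $c_R\to 0$ via the Orlicz--H\"older inequality and the Young estimate $s\leq\phi^{-1}(s)\widetilde{\phi}^{-1}(s)$, and then appeals to Fatou to pass to the limit in $\|u-c_R\|_{L^\phi(B_R)}$.

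What each approach buys: the paper's computation is shorter and makes transparent use of the structural identity $|B_R|^{-1/p}/\phi^{-1}(1/|B_R|)=(\nu^{\#}(|B_R|))^{1/p}$, tying the decay directly to the $p$-L\'evy integrability of $\nu$; however, as written it requires $u\in L^p(\R^d)$, which is not part of the hypothesis $u\in L^\phi(\R^d)$. Your argument avoids this extra integrability assumption entirely, relying only on general Young-function facts, at the modest cost of a Fatou/lower-semicontinuity step. In that sense your version is slightly more self-contained for the stated hypotheses.
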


\begin{proof}
Let $B_r= B(0,r)$, $r>0$. Using the assumption and the formula  assumption \eqref{eq:luxem-indicator}  gives 
\begin{align*}
\|u\|_{L^\phi(B_r)}
&\leq \big\|u-\mbox{$\fint_{B_r} u$}\big\|_{L^\phi(B_r)}
+\|\mathds{1}_{B_r}\|_{L^\phi(\R^d)}\Big|\fint_{B_r}u\Big|\\
&\leq \Theta \Big(\iil_{\R^d\R^d } |u(x)-u(y)|^p\nu(x-y)\d y\d x\Big)^{1/p}+ \frac{|B_r|^{-1/p}}{\phi^{-1}( 1/|B_r|)} \Big(\int_{\R^d}|u(x)|^p\d x\Big)^{1/p}.
\end{align*}

\noindent In virtue of Theorem \ref{thm:inequality-indicator}, we know that $\nu^{\#}(r)
\to 0$ as $r\to \infty$ so, by the definition of $\phi$ we obtain
\begin{align*}
\frac{|B_r|^{-1/p}}{\phi^{-1}( 1/|B_r|)}= |B_r|^{-1/p}w(|B_r|) =(\nu^{\#}(r))^{1/p}
\to 0 \quad \text{as}\quad r\to \infty. 
\end{align*}
Since $\Theta>0$ is independent of $r$, tending $r\to\infty$ in the foregoing yields the desired inequality. 
\end{proof}
\textbf{Open Questions:} 
$(i)$ Regarding Theorem \ref{thm:main-partial}, can the growth condition \eqref{eq:growth-condition} be improved? ($ii)$ Is the critical function $\phi$ optimal? Indeed if $\psi$ is another Young function satisfying the Gagliardo-Nirenberg-Sobolev inequality \eqref{eq:main-equality} then it also holds true for the Young function $t\mapsto \max(\phi(t), \psi(t))$. Note that we call $\phi$  optimal if there exists $c>0$ such that $\psi(t)\leq \phi(ct)$ for all $t>0$, this is equivalent  
(see Theorem \ref{thm:orlicz-emb}) to saying that $L^{\phi} (\R^d) \hookrightarrow L^{\psi}(\R^d)$; in other words $L^{\phi} (\R^d)$ is the smallest Orlicz space satisfying the inequality.  \eqref{eq:main-equality}.   

\bibliographystyle{alpha}
%\bibliography{bibliography-nonlocal}

\end{document}